\documentclass[a4paper, reqno, 11pt]{amsart}

\usepackage{a4wide}
\usepackage{amsmath,bm}
\usepackage{scrextend} 
\allowdisplaybreaks
\usepackage[normalem]{ulem}
\usepackage[dvipsnames]{xcolor}
\usepackage{setspace}
\usepackage{amsbsy}
\usepackage{hyperref}
\usepackage[overload]{empheq}
\usepackage{derivative, mathrsfs}
\usepackage{multirow}

\usepackage{subcaption}
\usepackage{float}

\usepackage[english]{babel}
\usepackage{amssymb}
\usepackage{enumerate}
\usepackage{ifthen}
\usepackage{bbm}
\usepackage{esint}
\usepackage{graphicx}  % standard latex graphics packages
\usepackage{geometry}
\provideboolean{shownotes} 
\setboolean{shownotes}{true}
\usepackage{color}
\usepackage{dsfont}

\usepackage{dsfont}
\usepackage{hyperref}
\usepackage{setspace}
\usepackage{soul}
\usepackage{xcolor}
\usepackage{mathtools}

\newcommand{\lsim}{{\;\raise0.3ex\hbox{$<$\kern-0.75em\raise-1.1ex\hbox{$\sim$}}\;}}
\newcommand{\gsim}{{\;\raise0.3ex\hbox{$>$\kern-0.75em\raise-1.1ex\hbox{$\sim$}}\;}}
\newcommand{\hole}[1]{
	\ifthenelse{\boolean{shownotes}}%
	{\begin{center} \fbox{ \rule {.25cm}{0cm}
				\rule[-.1cm]{0cm}{.4cm} \parbox{.85\textwidth}{\begin{center}
						\texttt{#1}\end{center}} \rule {.25cm}{0cm}}\end{center}}
	{}
}

\newcommand{\R}{{\mathbb R}} 
\newcommand{\x}{{\mathbf x}}
\newcommand{\del}{\partial}

\newcommand{\eps}{\varepsilon}

\newcommand{\tcb}{\textcolor{blue}}

\newtheorem{theorem}{Theorem}[section]
\newtheorem{proposition}[theorem]{Proposition}
\newtheorem{lemma}[theorem]{Lemma}
\newtheorem{corollary}[theorem]{Corollary}
\newtheorem{definition}[theorem]{Definition}
\allowdisplaybreaks
\theoremstyle{remark}
\newtheorem{remark}[theorem]{Remark}

\numberwithin{equation}{section}

\numberwithin{equation}{section}
\hypersetup{
    colorlinks=true,
    linkcolor=blue,
    filecolor=blue,      
    urlcolor=blue,
}

\begin{document}

    \title[Equilibration vs Localization]{Equilibration versus Localization in a  Diffusion-relaxation system }

        \author[A. AlNajjar]{Aseel AlNajjar}
        \address[Aseel AlNajjar]{\newline Computer, Electrical and Mathematical Science and Engineering Division, King Abdullah University of Science and Technology (KAUST), Thuwal 23955-6900, Saudi Arabia}
        \email{aseel.alnajjar@kaust.edu.sa}

        \author[H. Kim]{Hoyoun Kim}
        \address[Hoyoun Kim]{\newline Computer, Electrical and Mathematical Science and Engineering Division, King Abdullah University of Science and Technology (KAUST), Thuwal 23955-6900, Saudi Arabia}
        \email{hoyoun.kim@kaust.edu.sa}

        \author[A. Tzavaras]{Athanasios Tzavaras}
        \address[Athanasios Tzavaras]{\newline Computer, Electrical and Mathematical Science and Engineering Division, King Abdullah University of Science and Technology (KAUST), Thuwal 23955-6900, Saudi Arabia}
        \email{athanasios.tzavaras@kaust.edu.sa}

	\date{\today}
	
	\begin{abstract}
		We consider a diffusion-relaxation system and investigate the conditions on parameters leading to equilibration versus localization. When the diffusion is dominant, solutions converge toward homogeneous equilibria. By contrast,  when the effective diffusion is weak, localization emerges. Such behaviors have been studied for various models through formal asymptotic arguments and linearized stability analysis, but rigorous understanding of the associated  nonlinear phenomena remains limited, particularly in higher dimensions. In the equilibration regime, we establish convergence toward constant equilibria by exploiting an energy dissipation structure and invariant-region estimates. In the localization regime, we study self-similar solutions and transform the problem of their existence into an autonomous dynamical system. The existence of self-similar profiles associated to localizing solutions is reduced to the construction of a heteroclinic orbit for an autonomous dynamical system. Their existence  is obtained through an application of geometric singular perturbation theory. Our analysis  provides a rigorous characterization of the transition from equilibration to localization.
    \end{abstract}

%        \subjclass[2020]{35D99, 35Q35, 76N10, 76R50, 76T30.}
        %35D30  	Weak solutions to PDEs
        %35D99  	None of the above, but in this section
        %35Q35  	PDEs in connection with fluid mechanics
        %76N10  	Existence, uniqueness, and regularity theory for compressible fluids and gas dynamics
        %76R50  	Diffusion
        %76T30  	Three or more component flows
%        \keywords{T}  
	
	\maketitle
	
%	\tableofcontents

%----------------------------------------------------------------------------------------
%----------------------------------------------------------------------------------------
%----------------------------------------------------------------------------------------
%----------------------------------------------------------------------------------------
%----------------------------------------------------------------------------------------

\section{Introduction}
We consider the system of partial differential equations
\begin{equation}
	\tag{P}
	\begin{aligned}
		\partial_t u &= \Delta (\frac{1}{\gamma}u^n),
		\\
		\partial_t \gamma &= -\gamma +u^m.
	\end{aligned}
	\label{pmi}
\end{equation}
consisting of a nonlinear diffusion equation coupled with a relaxation equation. The system is supplemented with initial data,
\begin{equation}
	u(\textbf{x},0) = u_0(\textbf{x})>0, \quad \gamma(\textbf{x},0) = \gamma_0(\textbf{x})>0, \quad( u_0, \gamma_0) \in L^\infty(\Omega)\times L^\infty(\Omega)
\end{equation}
and the problem is set up either in the whole space $\R^d$ or in a bounded domain $\Omega \subset \R^d$ with smooth boundary,
in which case it is coupled with Neumann boundary conditions
\begin{equation}\label{nbc}
	\frac{\partial u}{\partial \nu} = 0 \hspace{0.2cm}\text{on}\hspace{0.1cm}\partial \Omega, \hspace{0.4cm} \frac{\partial \gamma}{\partial \nu} = 0 \hspace{0.2cm}\text{on}\hspace{0.1cm}\partial \Omega.
\end{equation}
Sometimes, when precise computations are needed, the domain $\Omega$ is selected as $[0,1]^d$.

We study the behavior of solutions as the parameters vary in the range $m, n > 0$. As time
proceeds the relaxation equation will drive the behavior towards the equilibrium curve $\gamma = u^m$. 
According to this premiss,  the effective response of the system is captured by the diffusion equation $\del_t u = \Delta u^{n-m}$ 
which is stable for $n> m$, but unstable and conceivably ill-posed in the range $n < m$.
An asymptotic calculation, using the Chapman-Enskog expansion, indicates  that the next order of the expansion offers a stabilizing mechanism 
in the unstable range (see Section~\ref{sec:CE}). Numerical calculations corroborate this scenario and indicate
that solutions equilibrate in the range $n > m$ and that localized structures emerge within the range $m > n$, see Appendix \ref{appA}.
The goal of the work is to quantitatively analyze the aforementioned mechanism.

The system \eqref{pmi} is motivated by both physical and biological applications, of modeling shear band formation in plasticity  and chemotactic bacterial aggregation. 
These settings, very different in nature, share a common feature of emerging localized structures. We posit that a diffusion--relaxation 
structure can provide a quantitative explanation of that behavior where nonlinear diffusion competes with a relaxation mechanism
that controls the diffusivity.  Depending on the parameter range, this interplay leads to equilibrium  when the diffusion dominates;
alternatively, spatial nonuniformities emerge in the diffusivity and lead to the formation of localized structures.

Shear band formation is a localization phenomenon widely observed in plasticity. Of relevance here is the emergence of coherent localized structures
during high-strain rate deformation of metals often preceding material failure. The study of shear bands, initiated in \cite{ZenerHollomon1944}, 
has attracted ample attention in the mechanics literature, see \cite{Viswanathan2020} for a recent survey of the subject.
We refer to \cite{KatsaounisTzavaras2009} for the mathematical modeling of shear bands and an explanation of the role of relaxation
in that context.
Within this framework, localization is understood not merely as linear ill-posedness, but as a nonlinear outcome of competing mechanisms.
 The emergence of coherent localized structures is rigorously demonstrated through the construction of focusing solutions 
 using dynamical systems methods and geometric singular perturbation theory \cite{KatsaounisOlivierTzavaras2017,LeeTzavaras2017,LeeKatsaounisTzavaras2019}.
 In that case the base solutions are time-dependent which adds complexity and makes the analysis cumbersome.
 The model \eqref{pmi} is intended to capture and elucidate the main mechanism of localization,
 in a simpler setting (than the shear band models) involving the nonlinear stability or instability of steady equilibria (rather than dynamically evolving states). Compared to the literature on shear bands,  it has the added complexity of involving several space dimensions.
 
In a different direction, now  from a biological perspective, a localization mechanism arises in chemotaxis and is classically modeled by Keller--Segel models \cite{KellerSegel1970}. This concentration mechanism driven by advection has been extensively analyzed starting from the pioneering work \cite{JagerLuckhaus1992,HerreroVelazquez1997} and has led to a rich mathematical theory of aggregation and blow-up phenomena, {\it e.g.} \cite{CarrilloEtAl2011}. Comprehensive surveys of the Keller--Segel theory and its variants can be found in \cite{Horstmann2003,Hillen2009}.
Chemotaxis is traditionally modeled through an advection-driven mechanism and does not relate in an obvious way to systems such as \eqref{pmi}. 
Recent studies have incorporated nonlinear mobilities, phase-separation effects, and diffusion--relaxation structures. In particular, aggregation has been interpreted as an instability and phase-separation process \cite{ChoiKim2024}, while logarithmic and diffusion--relaxation chemotaxis models provide simplified formulations of the classical Keller--Segel dynamics while still capturing essential aggregation mechanisms \cite{DesvillettesKimTrescasesYoon2019}. In the present work, we indicate that diffusion--relaxation systems may also exhibit localization phenomena through a mechanism distinct from the classical chemotactic drift.

In order  to set the problem precisely, observe that:
\begin{itemize}
	\item The system \eqref{pmi} admits a two-parameter family of trivial solutions $(u,\gamma) = (a,a^m+b e^{-t})$ for $a, b\ge 0$.
	\item If $u_0 \in L^1(\Omega)$, the system conserves the density $u$ 
    \begin{equation}
        \int_\Omega u(\textbf{x},t) d\textbf{x} = \int_\Omega u_0(\textbf{x}) d\textbf{x} \quad \mbox{for any $t\ge 0$}.
        \label{conslaw}
    \end{equation}
	\item In the case that  $\Omega = \R^d$, the system is scaling invariant in space, namely:  if $u(\textbf{x},t)$ and $\gamma(\textbf{x},t)$ form a solution of \eqref{pmi}, 
	then for  $c>0$ 
	\begin{equation} \label{scin}
		\tilde{u}(\textbf{x},t) = c^a \ u(c \textbf{x}, t), \quad \tilde{\gamma}(\textbf{x},t) = c^{b} \ \gamma(c \textbf{x}, t)
	\end{equation}
	is again a solution of \eqref{pmi}, where the constants 
	\begin{equation} \label{param}
		a := a^{m,n} = {2 \over 1+m-n}, \quad b := b^{m,n} = {2m \over 1+m-n} \, ,
	\end{equation}
	are functions of $m$, $n$ and will play a prominent role throughout this work.
\end{itemize}
In the sequel, we study the nonlinear stability/instability of the equilibria $(a, a^m)$, $a > 0$.
The aim is to present a rigorous analysis of the phenomena of equilibration versus localization in several space dimensions. 
The relevance of the transition is captured, in Section~\ref{sec:CE}, by a Chapman-Enskog expansion of the diffusion--relaxation system \eqref{pmi}
in the zero-relaxation time limit.

We start in Section \ref{sec:linear} with a study of linearized stability of the Neumann problem \eqref{pmi}, \eqref{nbc} for the domain 
$\Omega = [0,1]^d$ which facilitates explicit computation. Analysis of the eigenvalues then predicts
\begin{itemize}
\item Hadamard instability in the parameter range $n = 0$, $m > 0$.
\item Linearized asymptotic stability when $n > m \ge 0$.
\item Turing Type II instability when $m > n >0$.
\end{itemize}

In Section \ref{sec:equilibration} we consider the parameter range $n > m \ge 0$ and establish convergence of solutions to the equilibria $(a,a^m)$
using energy methods, invariant regions and techniques of nonlinear analysis.
The diffusion--relaxation system \eqref{pmi} is endowed with an {\em energy dissipation structure},
\begin{equation}
	\frac{d}{dt}\int_{\Omega} F(u,\gamma) d\x + (n+1)\int_{\Omega} |\nabla \frac{u^n}{\gamma}|^2 d\x + \int_{\Omega} \frac{1}{\gamma^2} \Big((u^m)^{\frac{n+1}{m}}-\gamma^{\frac{n+1}{m}}\Big)(u^m-\gamma) d\x = 0 \, ,
	\label{eq:EDidentity}
\end{equation}
the derivation of which is provided in Lemma~\ref{lem:energy}, where 
\begin{equation}
	F(u,\gamma) := \frac{u^{n+1}}{\gamma}+\frac{m}{n-m+1}\gamma^{\frac{n-m+1}{m}} \, .
\end{equation}
 Note that the first equation in \eqref{pmi} when $\gamma$ fixed is a gradient flow in $H^{-1}$, while the second equation \eqref{pmi} when
  $u$ fixed is  a gradient flow in $L^2$. Equation \eqref{eq:EDidentity} captures the combined energy dissipation induced by the coupling, 
  and exhibits two dissipative mechanisms one by the diffusion of the quantity $\sigma = u^n / \gamma$ (that we call stress), and one by the frictional dissipation of the relaxation. The energy density of the Lyapunov functional \( \int F(u,\gamma) d\x\) is nonnegative whenever \(m < n+1\) 
  and in that range admits the minimal value zero. By contrast, in the range \( n+1 < m\)
the energy may be driven to negative infinity and this range is expected to be inherently unstable. 
In the range \(m < n\), the energy is convex and stabilizes the process; this aspect is analyzed in detail in Section~\ref{sec:equilibration}.

The regime \(n < m < n+1\) is the most interesting, where based on the numerical observations in Figure~\ref{fig:EquilvsLocal}, one expects localization to occur.
The functional \(F(u,\gamma)\) remains bounded from below in that range but is no longer convex, and the energy dissipation identity is valid but no longer leads to equilibration. 
To get a further insight in that region we will consider the feasibility of a class of solutions of the form
\begin{equation}\label{intro-ansatz}
u(\rho,t)=e^{a\lambda t}\bar U(\xi),
\qquad
\gamma(\rho,t)=e^{b\lambda t}\bar\Gamma(\xi),
\qquad
\xi=e^{\lambda t}\rho, \; \rho = |x|
\end{equation}
where $a,b$ are the parameters in \eqref{param} and $\lambda>0$. Such functions will exhibit localizing behavior provided
the profiles are appropriately selected, see Definition \ref{def:localization}. It is shown in Section \ref{sec:localization} that
the profiles $(\bar U, \bar \Gamma)$ must satisfy the boundary value problem
\begin{align*}
&\begin{cases}
\frac{2}{1+m-n} \lambda \bar{U} + \lambda \xi \bar{U}' = \big(\frac{1}{\bar{\Gamma}} \ \bar{U}^n \big)'' + {d-1 \over \xi} \big(\frac{1}{\bar{\Gamma}} \ \bar{U}^n \big)',   \\ 
\frac{2m}{1+m-n} \lambda \bar{\Gamma}+\lambda \xi \bar{\Gamma}'  = -\bar{\Gamma} + \bar{U}^m 
\end{cases} \qquad 0 < \xi < \infty
 \tag{$PS$}
\\
&\bar{U}(0) = U_0 \, , \quad \bar{\Gamma}(0) = 1 \, , \quad \bar{U}^\prime (0) = 0, \quad \bar{\Gamma}^\prime (0)=0
\\
&\bar{U} (\xi) \to 0 \, \quad \bar{\Gamma} (\xi) \to 0 \, , \quad \mbox{as $\xi \to \infty$}
\end{align*}
where $U_0 > 0$ is a parameter connected to the growth rate $\lambda$ via
\begin{equation}
		 a \lambda + 1 =  U_0^m
\end{equation} 

In Sections~\ref{sec:reduction}, \ref{sec:asymptotic}, and \ref{sec:gspt} we devise an approach to solve the problem of determining the localization profiles. This is done by desingularizing the problem and transforming the resulting autonomous system to the construction
of a heteroclinic orbit for the more convenient system \eqref{eq:pqr}. The somewhat elaborate transformations are described in Section \ref{sec:reduction}. 
The construction of the heteroclinic orbit for the system \eqref{eq:pqr} is performed by employing  the geometric theory of singular perturbations 
(an idea developed in the context of shear band problems in \cite{LeeTzavaras2017}). This leads to an existence theory for 
a heteroclinic orbit in the range  $n< m < 1$ with $n$ sufficiently small.
The procedure is carried out in Sections~\ref{sec:asymptotic}, and \ref{sec:gspt} for dimension $d=1$. The existence result is stated in Theorem \ref{thm:exist}. When translated to localizing solutions via the transformation \eqref{intro-ansatz}, it leads to precise properties on 
the growth in time and decay in space of the localizing solutions, which are compared to the ones
obtained numerically in Section \ref{sec:numeric}.

%\newpage

%%%%%%%%%%%%%%%%%%%%%%%%%%%%%%%%%%%%%%%%%%%%%%%%%%%%%%%%%%%%%%%%
\section{Chapman-Enskog expansion of the zero relaxation-time limit}\label{sec:CE}
Our goal in this section is to obtain an effective equation for the zero-relaxation time limit of the system  \eqref{pmi}.
Introduce a parameter $\eps>0$ and use a space-time rescaling
 $ t \to \eps \ t$, $ \x \to \sqrt{\eps} \ \x$
 that preserves the diffusive scaling but observes the long-time asymptotic behavior of solutions in an approximating sense. 
We obtain
\begin{equation} \label{eq:timerescaling}
	\begin{cases}
		\partial_t u = \Delta \big( \frac{1}{\gamma} u^n \big), \\
		\eps \partial_t \gamma = -\gamma + u^m.
	\end{cases}
\end{equation}
Then we consider the Chapman-Enskog expansion for this problem. Let
\begin{align*}
	u &= u_0 + \eps u_1 + O(\eps^2), \\
	\gamma &= \gamma_0 + \eps \gamma_1 + O(\eps^2),
\end{align*}
where $u_i$ and $\gamma_i$ satisfy the Neumann boundary condition and have initial values given by
$u_0(\x,0) = u(\x,0)$, $\gamma_0(\x,0) = \gamma(\x,0)$, $u_k(\x,0) = \gamma_k(\x,0) = 0$, for  $k=1,2,\cdots$.
The asymptotics of the system  \eqref{eq:timerescaling} are expressed using the formulas
\begin{align*}
%	\partial_t u &= \partial_t u_0 + \eps \partial_t u_1 + O(\eps^2) \\
%	\eps \partial_t v &= \eps \partial_t v_0 + O(\eps^2) \\
	\Delta \big( \frac{1}{\gamma} u^n \big) &= \Delta \big( \frac{1}{(\gamma_0 + \eps \gamma_1 + \cdots)} (u_0 + \eps u_1 + \cdots)^n \big) \\ &= \Delta \big( \frac{1}{\gamma_0} (1 - \eps \frac{\gamma_1}{\gamma_0} + \cdots) (u_0^n + \eps n u_0^{n-1} u_1 + \cdots) \big) \\ &= \Delta \big( \frac{u_0^n}{\gamma_0} \big) +\eps \Delta \big( n \frac{u_0^{n-1}}{\gamma_0} u_1 - \frac{u_0^n}{\gamma_0^2} \gamma_1  \big) + O(\eps^2) \\
	-\gamma + u^m & = -(\gamma_0 + \eps \gamma_1  + \eps^2 \gamma_2 + \cdots) + (u_0 + \eps u_1 + \eps^2 u_2 + \cdots)^m \\ &= (u_0^m-\gamma_0) + \eps (mu_0^{m-1} u_1 - \gamma_1 ) + O(\eps^2)
\end{align*}
Collecting the terms up to the $O(\eps)$ order, we have 
\begin{align*} 
	O(1) \  \mbox{order terms} \ :& \ \partial_t u_0 = \Delta \big( \frac{u_0^n}{\gamma_0} \big) \\
	& \ u_0^m-\gamma_0 = 0 \\ 
	O(\eps) \  \mbox{order terms} \ :& \ \partial_t u_1 = \Delta \big( n \frac{u_0^{n-1}}{\gamma_0} u_1 - \frac{u_0^n}{\gamma_0^2} \gamma_1  \big) \\
	& \ \partial_t \gamma_0 = mu_0^{m-1} u_1 - \gamma_1
\end{align*}
The order $O(1)$ term gives the system for $(u_0,\gamma_0)$.
\begin{equation} \label{sysu0}
	\begin{cases}
		\partial_t u_0 = \Delta (u_0^{n-m}), \\
		\gamma_0 = u_0^m.
	\end{cases}
\end{equation}
At the order of $O(\eps)$, we obtain the system for $(u_1,\gamma_1)$.
\begin{equation} \label{sysu1}
	\begin{cases}
		\partial_t u_1 = \Delta \big( (n-m) u_0^{n-m-1} u_1 + mu_0^{n-m-1} \Delta (u_0^{n-m})  \big), \\
		\gamma_1 = mu_0^{m-1} u_1 - mu_0^{m-1} \Delta (u_0^{n-m}).
	\end{cases}
\end{equation}
Combining \eqref{sysu0} and \eqref{sysu1}, we obtain an equation for $u$ that approximates the dynamics of \eqref{eq:timerescaling} up to order $O(\eps^2)$
\begin{align*}
	\partial_t u &= \partial_t u_0 + \eps \partial_t u_1 + O(\eps^2) 
	\\ 
	&= \Delta (u_0^{n-m}) + \Delta \big( \eps (n-m) u_0^{n-m-1} u_1 + \eps mu_0^{n-m-1} \Delta (u_0^{n-m})  \big) + O(\eps^2) 
	\\ 
	&= \Delta \big( (u_0+\eps u_1 + O(\eps^2))^{n-m} + \eps mu_0^{n-m-1} \Delta (u_0^{n-m})  \big) + O(\eps^2) \\ &= \Delta \big( u^{n-m} + \eps mu^{n-m-1} \Delta (u^{n-m})  \big) + O(\eps^2)
\end{align*} 
Hence, the effective equation satisfied by $u$ within $O(\eps^2)$ is
\begin{equation} \label{eq:chapmanenskog}
	\partial_t u =\Delta \big( u^{n-m} + \eps mu^{n-m-1} \Delta (u^{n-m})  \big).
\end{equation}
%\tcr{When $m = n+1$, this is the case of the paper [Katsaounts and Tzavaras, 2011].}
%Setting  $F(u) = u^{n-m}$ this reads
%\begin{equation}
%	\partial_t u =\Delta \big( F(u) + \frac{\eps m}{n-m} F'(u) \Delta F(u)  \big).
%\end{equation}
%In turn, multiplying by $F(u)$ and $F'(u)\Delta F(u)$, we derive the following inequalities.
%\begin{lemma} \label{lem:chapman}
%	Denote $G'(u) = F(u)$. Then \eqref{eq:chapmanenskog} satisfies the energy identities
%	\begin{align*}
%		& \partial_t \int_\Omega G(u) d\x + \int |\nabla F(u)|^2 d\x = \frac{\eps m}{n-m}\int  F'(u) |\Delta F(u)|^2 d\x, \\
%		& \frac{1}{2}\partial_t\int |\nabla F(u)|^2 d\x + \int F'(u) |\Delta F(u)|^2 d\x = \frac{\eps m}{n-m} \int |\nabla (F'(u) \Delta F(u) ) |^2 d\x.
%	\end{align*}
%\end{lemma}

\smallskip
In summary, the asymptotics and stability considerations suggest two cases:
\begin{enumerate}
	\item If $n>m$, then the $O(\eps)$ approximation is the porous media equation 
	\begin{equation}
		\partial_t u = \Delta u^{n-m}
	\end{equation}
	which is stable.
%	
%	
%	\begin{align*}
%		& \partial_t \int_\Omega G(u) d\x + \int |\nabla F(u)|^2 d\x = 0, \\
%		& \frac{1}{2}\partial_t\int |\nabla F(u)|^2 d\x + \int F'(u) |\Delta F(u)|^2 d\x = 0.
%	\end{align*}
	
	\item If $n<m$, then the leading order approximation is unstable; in fact, it exhibits Hadamard-instability.
	The $O(\eps^2)$ approximation becomes the Cahn-Hilliard  equation of the form:
	\begin{equation}
		\partial_t u = \Delta \big( u^{n-m} + \eps mu^{n-m-1} \Delta (u^{n-m})  \big),
	\end{equation}
\end{enumerate}	
	
	Consider a perturbation $u := 1+w$ of \eqref{eq:chapmanenskog}  around a constant equilibrium: then 
	$w$ satisfies 
	\begin{equation*}
		\frac{1}{|\Omega|}\int_\Omega u d\x = 1, \qquad \int_\Omega w d\x = 0,
	\end{equation*}
	and the linearized equation reads
	\begin{align}\label{lineqn}
		\partial_t w &= (n-m) \Delta w + \eps m(n-m) \Delta^2 w .
	\end{align}
	%\color{blue}
	On a bounded domain, we impose to \eqref{lineqn} the boundary conditions
	\begin{equation*}
		\nu\cdot\nabla w=0, \qquad \nu\cdot\nabla\Delta w=0.
	\end{equation*}
	One then easily obtains the energy identity
	\begin{equation*}
		\frac{1}{2}\frac{d}{dt}\int_\Omega w^2\,d\x+\eps m(m-n)\int_\Omega|\Delta w|^2\,d\x=(m-n)\int_\Omega|\nabla w|^2\,d\x.
	\end{equation*}
	When $m>n$, the destabilizing effect $(m-n)\int_\Omega|\nabla w|^2\,d\x$ of the backward diffusion term is counteracted by the stabilizing effect  of the fourth-order term. Using eigenmode analysis, in the spirit performed in the following section for the general linearized problem, 
	one can see that the high-frequency terms get stabilized.
	\color{black}

%
%
%\smallskip
%\noindent
%Based on the energy estimate of Lemma~\ref{lem:chapman}, we distinguish two cases:
%\begin{enumerate}
%	\item If $n>m$, then $G(u)$ of Lemma~\ref{lem:chapman} is a convex functional and the following porous media equation is well-posed;
%	\begin{equation}
%		\partial_t u = \Delta F(u),
%	\end{equation}
%	which approximates the main equation \eqref{pmi} up to $O(\eps)$ order. It  satisfies the energy dissipation identities.
%	\begin{align*}
%		& \partial_t \int_\Omega G(u) d\x + \int |\nabla F(u)|^2 d\x = 0, \\
%		& \frac{1}{2}\partial_t\int |\nabla F(u)|^2 d\x + \int F'(u) |\Delta F(u)|^2 d\x = 0.
%	\end{align*}
%	
%	\item If $n<m$, then $G(u)$ of Lemma~\ref{lem:chapman} is a concave functional and a Cahn-Hilliard type equation appears:
%	\begin{equation}
%		\partial_t u = \Delta \big( u^{n-m} + \eps mu^{n-m-1} \Delta (u^{n-m})  \big),
%	\end{equation}
%	which approximates the equation \eqref{pmi} up to the $O(\eps^2)$ order. It satisfies the energy estimates:
%	\begin{align*}
%		& \partial_t \int_\Omega G(u) d\x + \int |\nabla F(u)|^2 d\x = \frac{\eps m}{n-m}\int  F'(u) |\Delta F(u)|^2 d\x, \\
%		& \frac{1}{2}\partial_t\int |\nabla F(u)|^2 d\x + \int F'(u) |\Delta F(u)|^2 d\x = \frac{\eps m}{n-m} \int |\nabla (F'(u) \Delta F(u) ) |^2 d\x.
%	\end{align*}
%\end{enumerate}

%\vfil\eject
%  section linearized
%%%%%%%%%%%%%%%%%%%%%%%%%%%%%%%%%%%%%%%%%%%%%%%%%%%%%%%%%%%%%%%%

\section{Linearized stability and classification of instability types}\label{sec:linear}
In this section we will carry out the stability analysis of the system \eqref{pmi} in the $d$-dimensional hypercube $\Omega = [0,1]^d$ around the equilibrium solution. On a bounded domain with Neumann boundary conditions, the function $u$ satisfies the conservation law
\begin{equation}
	\int_\Omega u(\textbf{x},T) d\textbf{x} = \int_{\Omega} u_0(\textbf{x})d\textbf{x}, \hspace{0.4cm} \forall  \hspace{0.1cm} T>0, \hspace{0.1cm} \textbf{x}\in \Omega.
\end{equation}
Denote $a:= \int_\Omega u_0(\textbf{x})d\textbf{x}$. There is a unique trivial solution $(u,\gamma) = (a,a^m)$. We are interested in the linear stability analysis of the trivial solution via the perturbation method. To this end, for $\delta >0$, we let 
\begin{align*}
	u &= a + \delta \bar{u} + O(\delta^2), \\
	\gamma &= a^m + \delta \bar{\gamma} + O(\delta^2). 
\end{align*}
By binomial series expansion, omitting nonlinear terms  
\begin{align*}
	\delta \bar{u}_t &= \Delta \Bigg[ \frac{1}{a^m + \delta \bar{\gamma} + O(\delta^2) } \big( a + \delta \bar{u} + O(\delta^2) \big)^n \Bigg] + O(\delta^2) 
	\\
	&= a^{n-m} \ \Delta  \Bigg[\big(1 - \delta a^{-m}\bar{\gamma} \big) \big( 1 + \delta n a^{-1} \bar{u}  \big) \Bigg] + O(\delta^2) 
	\\
	&= \delta n a^{n-m-1} \Delta  \bar{u} - \delta a^{n-2m} \Delta  \bar{\gamma} + O(\delta^2). 
\end{align*}
Similarly, 
\begin{align*}
	\delta \bar{\gamma}_t &=  
		- \delta \bar{\gamma}  + \delta m a^{m-1} \bar{u} + O(\delta^2). 
\end{align*}
Thus we arrive at the linearized system:
\begin{align*}
	\bar{u}_t &= n a^{n-m-1}  \Delta  \bar{u} - a^{n-2m} \Delta  \bar{\gamma} , \\ 
	\bar{\gamma}_t &= m a^{m-1} \bar{u} -   \bar{\gamma}.
\end{align*}
The characteristic equation of the linearized operator is independent of $a$. Thus, we assume that $a=1$ without loss of generality, and write the linearized system in matrix form 
\begin{equation*}
	\frac{d}{dt} \begin{bmatrix}
		\bar{u} \\ \bar{\gamma}
	\end{bmatrix} = 
	\begin{bmatrix}
		n \Delta  & - \Delta  \\ 
		m   & -1
	\end{bmatrix} \ \begin{bmatrix}
		\bar{u} \\ \bar{\gamma}
	\end{bmatrix}.
\end{equation*}

For the linearized system with Neumann boundary conditions, we express the solution via a cosine Fourier series 
\begin{align*}
	\bar{u}(\textbf{x},t) &= \bar{u}(x_1,\cdots,x_d,t) = \sum_{k_1=0}^\infty \sum_{k_2=0}^\infty \cdots \sum_{k_d=0}^\infty \hat{u}_{k_1,k_2,\cdots,k_d}(t) \prod_{j=1}^d \cos(k_j \pi x_j),\\
	\bar{\gamma}(\textbf{x},t) &= \bar{\gamma}(x_1,\cdots,x_d,t) = \sum_{k_1=0}^\infty \sum_{k_2=0}^\infty \cdots \sum_{k_d=0}^\infty \hat{\gamma}_{k_1,k_2,\cdots,k_d}(t) \prod_{j=1}^d \cos(k_j \pi x_j).
\end{align*}
%where $\hat{u}_{k_1,k_2,\cdots,k_d}(t)$ and $\hat{\gamma}_{k_1,k_2,\cdots,k_d}(t)$ are given by \tcr{later}.\\\\
where the Fourier coefficients $\big(\hat{u}_{k_1,k_2,\cdots,k_d}(t), \hat{\gamma}_{k_1,k_2,\cdots,k_d}(t) \big)$ satisfy the following ordinary differential system; 
\begin{equation*}
	\frac{d}{dt} \begin{bmatrix}
		\hat{u}_{k_1, k_2, \cdots,k_d}  \\ \hat{\gamma}_{k_1, k_2, \cdots,k_d} 
	\end{bmatrix} = 
	\begin{bmatrix}
		-n \pi^2 (k_1^2+k_2^2+\cdots+k_d^2)  & \pi^2 (k_1^2+k_2^2+\cdots+k_d^2)  \\ 
		m   & -1
	\end{bmatrix} \ \begin{bmatrix}
		\hat{u}_{k_1, k_2, \cdots,k_d} \\ \hat{\gamma}_{k_1, k_2, \cdots,k_d} 
	\end{bmatrix}.
\end{equation*}

We next carry out the eigenvalue analysis for the coefficient matrix 
\begin{equation*}
	A_{\textbf{k}} := 	\begin{bmatrix} -n \pi^2 (k_1^2+k_2^2+\cdots+k_d^2)  & \pi^2 (k_1^2+k_2^2+\cdots+k_d^2) \\ m   & -1 \end{bmatrix}
\end{equation*}
The characteristic equation is given by 
\begin{equation*}
	\operatorname{ch}(A_{\textbf{k}}) = \lambda^2 + (n\pi^2(k_1^2+k_2^2+\cdots+k_d^2) + 1) \lambda + (n-m) \pi^2(k_1^2+k_2^2+\cdots+k_d^2) = 0,
\end{equation*}
and the two eigenvalues are 
\small
\begin{align*}
	\lambda_{\textbf{k},\pm} &= \frac{-(n\pi^2l  + 1) \pm \sqrt{(n\pi^2l  + 1)^2 - 4(n-m) \pi^2l }}{2} \\ &= \frac{-(n\pi^2l  + 1) \pm \sqrt{(n\pi^2l  - 1)^2 + 4m \pi^2l }}{2}.
\end{align*}
where from now on we denote $l = k_1^2+k_2^2+\cdots+k_d^2$. Note that the two eigenvalues are real for all $m\ge 0$ and $n\ge 0$. 
\\\\
Having derived the growth rates $\lambda_{\textbf{k},\pm}$ associated with each Fourier mode, $k=0,1,2,\dots$, we classify the instabilities according to the behavior of eigenvalues as a function of $\textbf{k}$. Note that for the real eigenvalues $\lambda_{\textbf{k},\pm}$, we have $\lambda_{\textbf{k},+} \ge \lambda_{\textbf{k},-} $ and the growth rate of $\lambda_{\textbf{k},+}$ determines its instability type. A Hadamard-type instability is characterized by unbounded growth rates at high frequencies, namely $\sup_{\textbf{k}\ge 0}\Re \lambda_{\textbf{k},+} = +\infty$, which indicates ill-posedness through arbitrarily fast amplification of small-scale Fourier modes. In contrast, a Turing-type instability occurs when the homogeneous mode is stable, $\Re \lambda_{0,+}\le 0$, while $\Re \lambda_{\textbf{k},+}>0$ for some $\textbf{k}\ne 0$, with $\sup_\textbf{k} \Re \lambda_{\textbf{k},+}<\infty$. Following the perspective of Miyazako, Hori, and Hara \cite{MiyazakoHoriHara2013}, Turing instability mechanisms can be distinguished by whether the dominant growth occurs at finite or arbitrarily large wave numbers, which are named Type~I and Type~II Turing instabilities. According to the location of the most unstable modes, in Type~I, there exists a finite wavenumber $\textbf{k}_*>0$ such that $\Re \lambda_{\textbf{k}_*,+}=\max_{\textbf{k}\ge 0}\Re \lambda_{\textbf{k},+}>0$, leading to intrinsic wavelength selection; whereas in Type~II, the instability is dominated by increasingly high-frequency modes, in the sense that $\Re \lambda_{\textbf{k},+} >0$ for all sufficiently large $l = k_1^2+k_2^2+\cdots+k_d^2$, and the maximal growth rate is attained only in the limit $l\to\infty$. The asymptotic stability of our model can be classified as follows depending on the parameter range of $(n,m)$.

\vspace{0.8em}
\begin{enumerate}[{\bf Case} 1.]
	\item If $n= m= 0$, then we have stability along a line. $\lambda_{\textbf{k},+} = 0$ and $\lambda_{\textbf{k},-} = -1$. 
    \vspace{0.8em}
    
	\item If $n=0$, $m>0$, \emph{Hadamard} instability.
	\begin{align*}
		\lambda_{\textbf{k},+} &= \frac{- 1 + \sqrt{1 + 4m \pi^2l}}{2} = \sqrt{m} O(\sqrt{l}) \\ \lambda_{\textbf{k},-} &= \frac{- 1 - \sqrt{1 + 4m\pi^2 l}}{2} < -1
	\end{align*}
    \vspace{0.8em}
	
	\item If $n>m\ge 0$, Stable.
	\small
	\begin{align*}
		\lambda_{\textbf{k},+} &= \frac{-(n\pi^2l + 1) + \sqrt{(n\pi^2l + 1)^2 - 4(n-m) \pi^2l}}{2} < 0 \\ \lambda_{\textbf{k},-}&= \frac{-(n\pi^2l + 1) - \sqrt{(n\pi^2(l + 1)^2 - 4(n-m) \pi^2l}}{2} < 0.
	\end{align*}
    \vspace{0.8em}
	
	\item If $ n = m > 0$, Stable along a line.
	\begin{align*}
		\lambda_{\textbf{k},+} &= \frac{-(n\pi^2l + 1) + \sqrt{(n\pi^2l + 1)^2 }}{2} = 0 \\ \lambda_{\textbf{k},-}&= \frac{-(n\pi^2l + 1) - \sqrt{(n\pi^2l + 1)^2 }}{2} = -1.
	\end{align*}
    \vspace{0.8em}
	
	\item If $m> n> 0$, \emph{Turing Type II} instability.
	\begin{align*}
		\lambda_{\textbf{k},+} &= \frac{-(n\pi^2l + 1) + \sqrt{(n\pi^2l + 1)^2 - 4(n-m) \pi^2l}}{2} > 0 \\ \lambda_{\textbf{k},-}&= \frac{-(n\pi^2l + 1) - \sqrt{(n\pi^2l+ 1)^2 - 4(n-m) \pi^2l}}{2} < 0.
	\end{align*}
    \vspace{0.8em}
\end{enumerate}
\begin{lemma}[Proof for Turing Type II for Case 5.]
	If $m>n>0$, then $\lambda_{\textbf{k},+} \uparrow$ as $\sqrt{l} \uparrow$ and $$ \lambda_{0,0,+} = 0 <  \lambda_{\textbf{k},+} < \frac{m-n}{n} .$$ 
\end{lemma}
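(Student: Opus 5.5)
The plan is to treat $\lambda_{\textbf{k},+}$ as a function of the continuous variable $s:=\pi^2 l\ge 0$, set $c:=m-n>0$, and write
\begin{equation*}
	\lambda(s)=\frac{-(ns+1)+\sqrt{(ns+1)^2+4cs}}{2}.
\end{equation*}
At $s=0$ (i.e.\ $\textbf{k}=0$) this gives $\lambda(0)=\frac{-1+1}{2}=0$, which is the claimed value $\lambda_{0,0,+}=0$. For $s>0$ we have $(ns+1)^2+4cs>(ns+1)^2$, so $\lambda(s)>0$. Monotonicity in $s$ is equivalent to monotonicity in $\sqrt{l}$, because $s=\pi^2 l$ is increasing in $\sqrt l$.

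To avoid the cancellation in the difference of square roots, I will rationalize:
\begin{equation*}
	\lambda(s)=\frac{2cs}{(ns+1)+\sqrt{(ns+1)^2+4cs}}
	=\frac{2c}{\bigl(n+\tfrac1s\bigr)+\sqrt{\bigl(n+\tfrac1s\bigr)^2+\tfrac{4c}{s}}},\qquad s>0.
\end{equation*}
In the second expression the denominator is a strictly decreasing function of $s$: both $n+\frac1s$ and $\frac{4c}{s}$ decrease in $s$, and they are positive. Hence $\lambda(s)$ is strictly increasing on $(0,\infty)$. Since $\lambda(0)=0<\lambda(s)$, it is increasing on $[0,\infty)$.

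For the upper bound, note that the denominator tends to $n+\sqrt{n^2}=2n$ as $s\to\infty$, so $\lambda(s)\uparrow\frac{2c}{2n}=\frac{m-n}{n}$. Strict monotonicity then gives $\lambda(s)<\frac{m-n}{n}$ for every finite $s$. One can also check this directly: the denominator exceeds $n+\sqrt{n^2}=2n$ because $\frac1s>0$ and $n>0$.

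There is no real obstacle here. The only point needing care is rewriting the expression to avoid differentiating a difference of square roots, and the rationalized form handles this. I will also remark that the supremum $\frac{m-n}{n}$ is approached only as $l\to\infty$, which is exactly the Type~II characterization.
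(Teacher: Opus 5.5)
Your proof is correct. The upper bound uses the same rationalization as the paper. The paper bounds the square root below by $n\pi^2 l+1$ to get $\lambda_{\textbf{k},+}<\frac{(m-n)\pi^2 l}{n\pi^2 l+1}<\frac{m-n}{n}$; you instead divide through by $s$ and compare the denominator with $2n$.

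Your monotonicity argument, however, is a genuinely different route. The paper differentiates the characteristic equation $\lambda^2+(n\pi^2 s^2+1)\lambda+(n-m)\pi^2 s^2=0$ implicitly in $s=\sqrt{l}$. This gives $\frac{d\lambda_{\textbf{k},+}}{ds}=2n\pi^2 s\,\frac{(m-n)/n-\lambda_{\textbf{k},+}}{2\lambda_{\textbf{k},+}+n\pi^2 s^2+1}$, so positivity of the derivative follows from the bound just proved. That approach ties monotonicity directly to the bound and works from the dispersion relation alone, so it still applies when no closed-form root is available.

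You instead read monotonicity off the form $\frac{2c}{(n+1/s)+\sqrt{(n+1/s)^2+4c/s}}$, whose denominator is visibly decreasing and bounded below by its limit $2n$. This needs no calculus, and a single formula gives strict monotonicity, the strict bound, and the limit $\lambda_{\textbf{k},+}\to\frac{m-n}{n}$ as $l\to\infty$. That limit shows the bound is sharp and is approached only in the limit $l\to\infty$, never attained. This is what the Type II classification requires, and the paper's proof leaves it implicit.
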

\begin{proof}
	\begin{align*}
		\lambda_{\textbf{k},+} &= \frac{-(n\pi^2l + 1) + \sqrt{(n\pi^2l + 1)^2 + 4(m-n) \pi^2l}}{2} \\ &= \frac{2(m-n) \pi^2l}{n\pi^2l + 1 + \sqrt{(n\pi^2l + 1)^2 + 4(m-n) \pi^2l}} \\ & < \frac{(m-n) \pi^2l)}{n\pi^2l + 1} < \frac{m-n}{n}.
	\end{align*}
	From the characteristic equation $\operatorname{ch}(A_{\textbf{k}}) = 0$, $\lambda_{\textbf{k},+}$ satisfies
	\begin{equation*}
		\lambda_{\textbf{k},+}^2 + (n\pi^2l + 1)\lambda_{\textbf{k},+} + (n-m)\pi^2l = 0
	\end{equation*}
	By differentiating with $s = \sqrt{l}$, 
	\begin{equation*}
		2 \lambda_{\textbf{k},+} \frac{d \lambda_{\textbf{k},+}}{ds} + (n\pi^2s^2 + 1)\frac{d \lambda_{\textbf{k},+}}{ds} + 2n\pi^2s\lambda_{\textbf{k},+} + 2(n-m)\pi^2s = 0
	\end{equation*}
	\begin{equation*}
		\frac{d \lambda_{\textbf{k},+}}{ds} = 2n\pi^2s \cdot \frac{\frac{m-n}{n} - \lambda_{\textbf{k},+} }{2 \lambda_{\textbf{k},+} + n\pi^2s^2 + 1} > 0.
	\end{equation*}
\end{proof}

%\vfil\eject
%  section nonlinear stable
%%%%%%%%%%%%%%%%%%%%%%%%%%%%%%%%%%%%%%%%%%%%%%%%%%%%%%%%%%%%%%%%

\section{Nonlinear Analysis of Equilibration for \texorpdfstring{$n>m$}{n>m}} \label{sec:equilibration}
We will focus here on \eqref{pmi} for the case $n >m$. The aim is to show that in this regime, our solution $(u,\gamma)$ converges to the equilibrium as $t\rightarrow \infty$ in the $L^2$- sense. We start by deriving the energy structure of the system, exploiting a Lyapunov-type functional. 
In this section, we impose that the initial conditions be compatible with the boundary data, namely,  
\begin{equation}
\frac{\partial u_0}{\partial \nu} = 0 \hspace{0.2cm}\text{on}\hspace{0.1cm}\partial \Omega, \hspace{0.4cm} \frac{\partial \gamma_0}{\partial \nu} = 0 \hspace{0.2cm}\text{on}\hspace{0.1cm}\partial \Omega.
\label{initial-data-restrict}
\end{equation}

\begin{lemma}[Energy estimate.]\label{lem:energy} Let $\Omega \subset \mathbb{R}^d$ be an open and bounded subset of $\mathbb{R}^d$ for $d \geq 1$. Let $(u,\gamma)$ be a positive smooth solution to \eqref{pmi} with initial data $(u_0,\gamma_0)\in L^\infty(\Omega)\times L^\infty(\Omega)$ satisfying \eqref{initial-data-restrict}. Then, for $n, m >0$, $(u,\gamma)$ satisfies 
	\begin{equation}
		\frac{d}{dt}\int_{\Omega} F(u,\gamma) d\x + (n+1)\int_{\Omega} |\nabla \frac{u^n}{\gamma}|^2 d\x + \int_{\Omega} \frac{1}{\gamma^2} \Big((u^m)^{\frac{n+1}{m}}-\gamma^{\frac{n+1}{m}}\Big)(u^m-\gamma) d\x = 0.
	\end{equation}
    \label{energy}
	where 
	\begin{equation}
		F(u,\gamma) := \frac{u^{n+1}}{\gamma}+\frac{m}{n-m+1}\gamma^{\frac{n-m+1}{m}},
	\end{equation}
	is a convex functional near the equilibrium for $n>m$, and is additionally globally convex for $n> 1, n+1 >2m$.  %\tcb{Confusion: the functional is convex near equilibrium for $n >m $ and loses convexity in the other regime. Can we say anything about the convexity for $n>m$ but away from the equilibrium? Or is that not needed?}
\end{lemma}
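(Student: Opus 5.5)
The plan is to differentiate $\int_\Omega F(u,\gamma)\,d\x$ in time, substitute the two equations of \eqref{pmi}, and integrate by parts once in the diffusion term. The key point is that the partial derivatives of $F$ are arranged to match the structure of \eqref{pmi}. A direct computation gives
\begin{equation*}
F_u = (n+1)\frac{u^n}{\gamma} = (n+1)\sigma, \qquad F_\gamma = -\frac{u^{n+1}}{\gamma^2} + \gamma^{\frac{n+1-2m}{m}} = \frac{\gamma^{\frac{n+1}{m}} - u^{n+1}}{\gamma^2},
\end{equation*}
where $\sigma = u^n/\gamma$ is the stress. Thus
\begin{equation*}
\frac{d}{dt}\int_\Omega F\,d\x = (n+1)\int_\Omega \sigma\,\Delta\sigma\,d\x + \int_\Omega F_\gamma\,(u^m-\gamma)\,d\x .
\end{equation*}
Writing $u^{n+1} = (u^m)^{\frac{n+1}{m}}$, the second integral is exactly minus the relaxation dissipation term in the statement.

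For the first integral I integrate by parts, which gives $-(n+1)\int_\Omega |\nabla\sigma|^2\,d\x$ plus a boundary term $(n+1)\int_{\partial\Omega}\sigma\,\partial_\nu\sigma$. This boundary term vanishes because
\begin{equation*}
\partial_\nu\sigma = \frac{n u^{n-1}}{\gamma}\,\partial_\nu u - \frac{u^n}{\gamma^2}\,\partial_\nu\gamma = 0
\end{equation*}
by \eqref{nbc}. This is where positivity and smoothness of the solution are used, and where the compatibility condition \eqref{initial-data-restrict} ensures the Neumann conditions hold up to $t=0$. I expect this boundary bookkeeping to be the only delicate point; the rest is algebra. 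Combining the two pieces yields the identity of the lemma.

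For convexity I compute the Hessian of $F$:
\begin{equation*}
F_{uu} = n(n+1)\frac{u^{n-1}}{\gamma}, \qquad F_{u\gamma} = -(n+1)\frac{u^n}{\gamma^2}, \qquad F_{\gamma\gamma} = \frac{2u^{n+1}}{\gamma^3} + \frac{n+1-2m}{m}\,\gamma^{\frac{n+1-3m}{m}} .
\end{equation*}
At the equilibrium $\gamma = u^m$ these simplify, giving
\begin{equation*}
F_{\gamma\gamma} = \frac{n+1}{m}\,u^{n+1-3m}, \qquad \det D^2F = (n+1)^2\,u^{2n-4m}\Big(\frac{n}{m}-1\Big).
\end{equation*}
Since $F_{uu}>0$, the Hessian is positive definite on the equilibrium curve exactly when $n>m$. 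By continuity it stays positive definite in a neighborhood of that curve, which gives local convexity.

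For global convexity I expand the determinant for arbitrary $u,\gamma>0$:
\begin{equation*}
\det D^2F = (n+1)(n-1)\,\frac{u^{2n}}{\gamma^4} + \frac{n(n+1)(n+1-2m)}{m}\,u^{n-1}\,\gamma^{\frac{n+1-4m}{m}} .
\end{equation*}
Both terms are positive when $n>1$ and $n+1>2m$. Together with $F_{uu}>0$ this gives global convexity of $F$ on the positive quadrant.
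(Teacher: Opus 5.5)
Your proposal is correct and follows essentially the paper's route. The paper multiplies the first equation by $(n+1)u^n/\gamma$ and the second by $\gamma^{\frac{n+1}{m}-2}$, which is exactly your chain-rule computation with $F_u=(n+1)\sigma$ and $F_\gamma=\bigl(\gamma^{\frac{n+1}{m}}-u^{n+1}\bigr)/\gamma^2$, followed by the same integration by parts under the Neumann conditions. For convexity the paper completes the square in the second variation, $F_{uu}\bigl(\varphi-\frac{u}{n\gamma}\psi\bigr)^2+\bigl(F_{\gamma\gamma}-F_{u\gamma}^2/F_{uu}\bigr)\psi^2$, which is equivalent to your check that $F_{uu}>0$ and $\det D^2F>0$; your Hessian entries are the correct ones, while the paper's displayed second variation has a couple of typos in exponents and coefficients.
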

\begin{proof}
	Multiplying $\eqref{pmi}_1$ by $(n+1)\frac{u^n}{\gamma}$ and integrating by parts, we get 
	\begin{equation}
		\int_{\Omega} \frac{1}{\gamma} \partial_t u^{n+1}d\x = -(n+1)\int_\Omega |\nabla \frac{u^n}{\gamma}|^2 d\x,
		\label{estimate1}
	\end{equation}
	as we have zero Neumann boundary conditions. Moreover, 
	\begin{align}
		\partial_t \Big(\frac{u^{n+1}}{\gamma}\Big) &= \frac{1}{\gamma}\partial_t u^{n+1}-\frac{u^{n+1}}{\gamma^2}\partial_t \gamma 
		= \frac{1}{\gamma} \partial_t u^{n+1}-\frac{u^{n+1}}{\gamma^2}(u^m-\gamma).
		\label{time-der}
	\end{align}
	To control the last term of the above equation, we multiply $\eqref{pmi}_2$ by $\gamma^{\Tilde {\alpha}}$, where $\Tilde{\alpha} >0$, to obtain
	\begin{equation}
		\frac{1}{\Tilde{\alpha}+1} \partial_t \gamma^{\Tilde{\alpha}+1} = \gamma^{\Tilde{\alpha}}(u^m-\gamma).
		\label{estimate2}
	\end{equation}
	Adding \eqref{time-der} and \eqref{estimate2}
	\begin{align*}
		\partial_t \Big(\frac{u^{n+1}}{\gamma}+\frac{1}{\Tilde{\alpha}+1} \gamma^{\Tilde{\alpha}+1}\Bigg) &= \frac{1}{\gamma}\partial_t u^{n+1}-\frac{u^{n+1}}{\gamma^2}(u^m-\gamma)+ \gamma^{\Tilde{\alpha}}(u^m - \gamma)\\
		%&= \frac{1}{\gamma} \partial_t u^{n+1}-\frac{1}{\gamma^2}\Big(u^{n+1}-\gamma^{\Tilde{\alpha}+2}\Big)(u^m-\gamma)\\
		&= \frac{1}{\gamma} \partial_t u^{n+1}-\frac{1}{\gamma^2}\Bigg((u^m)^{\frac{n+1}{m}}-\gamma^{\Tilde{\alpha}+2}\Bigg)(u^m-\gamma).
	\end{align*}
	Choosing $\Tilde{\alpha} = \frac{n+1}{m}-2$, we see that the last term above is non-positive for any positive exponents $m,n$. \\
    We then have after integrating 
\begin{equation}
\begin{aligned}
		\frac{d}{dt} \int_\Omega \Bigg(\frac{u^{n+1}}{\gamma} &+\frac{m}{n-m+1}\gamma^{\frac{n-m+1}{m}}\Bigg) d\x 
	\\
	&= -(n+1)\int_{\Omega} |\nabla \frac{u^n}{\gamma}|^2 d\x -\int_{\Omega} \frac{1}{\gamma^2} \Bigg((u^m)^{\frac{n+1}{m}}-\gamma^{\frac{n+1}{m}}\Bigg)(u^m-\gamma)d\x.
\end{aligned}
\end{equation}
	Therefore, for the functional $F(u,\gamma) = \int_\Omega \frac{u^{n+1}}{\gamma} + \frac{m}{n-m+1} \gamma^{\frac{n-m+1}{m}}dx$, we have 
	\begin{equation}
		\frac{d}{dt}F(u,\gamma) + (n+1)\int_{\Omega} |\nabla \frac{u^n}{\gamma}|^2 d\x + \int_\Omega \frac{1}{\gamma^2}\Bigg((u^m)^{\frac{n+1}{m}}-\gamma^{\frac{n+1}{m}}\Bigg)(u^m-\gamma) d\x =0. 
	\end{equation}
    The functional $F$ is uniquely minimized at the equilibrium $(\overline{u},\overline{\gamma})= (a,a^m)$. To see this, we first assume without loss of generality that $|\Omega|=1$. Applying Young's inequality $ab\leq \frac{a^p}{p}+\frac{b^q}{q}$ with $p= \frac{n+1}{n-m+1}$, $q= \frac{n+1}{m}$, $a= \frac{u^{n-m+1}}{\gamma^{\frac{n-m+1}{n+1}}}$, $b = \gamma^{\frac{n-m+1}{n+1}}$, we obtain 
    \begin{equation}
        F(u,\gamma) \geq \frac{n+1}{n-m+1} \int_{\Omega} u^{n-m+1}d\x,
    \end{equation}
    where the right-hand side is strictly convex for $n>m$. Minimizing over the affine constraint set $\mathcal{A} = \Big\{u >0; \int_\Omega u(\x,t)d\x = a \Big\}$ implies that there is at most one minimizer. Applying Jensen's inequality 
    \begin{equation}
        \int_{\Omega} u^{n-m+1}\hspace{0.1cm}d\x \geq \Bigg(\int_{\Omega} u \hspace{0.1cm}d\x\Bigg)^{n-m+1} = a^{n-m+1},
    \end{equation}
    that is, the unique constrained minimizer is attained at $(\overline{u}, \overline{\gamma}) = (a,a^m)$. We can further verify by finding the first and second variations of the constrained problem 
    \begin{equation}
        \min_{(u,\gamma)} \hspace{0.1cm}F(u,\gamma) \hspace{0.2cm}\text{subject to} \hspace{0.2cm}  \int_{\Omega}u(\x,t)d\x = a.
    \end{equation}
    The Lagrangian is given by 
    \begin{equation*}
        \mathcal{L} = \int_{\Omega} \frac{u^{n+1}}{\gamma} +\frac{m}{n-m+1}\gamma^{\frac{n-m+1}{m}} -\lambda (u-a) \hspace{0.1cm}d\x.
    \end{equation*}
    The first variation gives the critical point $(\bar{u},\bar{\gamma})=(a,a^m)$. To show that $(\bar{u},\bar{\gamma}) = (a,a^m)$ is indeed a minimum, we find the second variation $\delta^2 F$ of $F$. To this end, we consider the perturbations $u_\varepsilon = a +\varepsilon \varphi$, $\gamma_\varepsilon = a^m + \varepsilon \psi$, where $\varphi, \psi$ are smooth functions independent of $\varepsilon$ with $\int_{\Omega}\varphi dx =0$. Then the second variation at the equilibrium $(\bar{u},\bar{\gamma}) = (a,a^m)$ is given by 
    \begin{align*}
        \delta^2F(a,a^m) &= \int_{\Omega} n(n+1) a^{n-m+1}\varphi^2-2(n+1)a^{n-2m}\varphi \psi + \frac{n+1}{m} a^{n-3m+1}\psi^2 d\x \\
        &= (n+1)a^{n-m-1}\int_{\Omega} n \Big(\varphi-\frac{a^{1-m}\psi}{n}\Big)^2 + \Big(\frac{n-m}{nm}\Big)\Big(a^{2-2m}\psi^2\Big)d\x,
    \end{align*}
    from which we have $\delta^2 F(a,a^m) > 0$ exactly when $n>m$.\\
    Away from the equilibrium, the second variation of $F$ is given by 
    \begin{align*}
       \delta^2 F(u,\gamma) &= \int_{\Omega} n(n+1)\frac{u^{n-1}}{\gamma}\varphi^2 -2(n+1)\frac{u^n}{\gamma^2}\varphi\psi +\Big(2\frac{u^{n+1}}{\gamma^3}+ \frac{n-3m+1}{m}\gamma^{\frac{n-3m+1}{m}}\Big)\psi^2 d\x \\
       &= \int_{\Omega} n(n+1)\frac{u^{n-1}}{\gamma}\Big(\varphi-\frac{u}{n\gamma}\psi\Big)^2 + \Big(\frac{n-1}{n}\frac{u^{n+1}}{\gamma^3}+\frac{n-2m+1}{m}\gamma^{\frac{n-3m-1}{m}}\Big)\psi^2 d\x,
    \end{align*}
    that is, $\delta^2 F(u,\gamma)>0$ provided that $n >1 $ and $n+1 > 2m$.
	 \end{proof}
	% \tcb{Note that near the equilibrium $(a,a^m)$, the functional $F$ is convex for $n > m > 0$. 
	% 	\begin{equation*}
	% 		\nabla^2 F(a,a^m) = 	\begin{bmatrix} n(n+1)a^{n-m-1} & -(n+1)a^{n-2m} \\\\ -(n+1)a^{n-2m}   & \frac{n+1}{m}a^{n-3m+1} \end{bmatrix}
	% 	\end{equation*}
	% 	then 
	% 	\begin{equation*}
	% 		\Tr \nabla^2 F(a,a^m)= n(n+1)a^{n-m-1}+\frac{n+1}{m}a^{n-3m+1} >0 (\tcr{a>0})
	% 	\end{equation*}
	% 	\begin{equation*}
	% 		\det \nabla^2 F(a,a^m)= (n+1)^2 a^{2n-4m}(\frac{n}{m}-1)   
	% 	\end{equation*}
	% 	we see that $\det \nabla^2 F(a,a^m)>0$ if and only if $n >m$. In particular, $F$ loses convexity near equilibrium in the unstable regime $ n < m$.}\\\\
	Next, we consider a comparison argument for our model based on the maximum principle \cite{chueh1977positively}. To do that, we first apply a change of variable $(u,\gamma) \to (\sigma,\gamma)$ where $\sigma := \frac{u^n}{\gamma}$, to obtain the following reaction-diffusion system 
	\begin{equation} \label{eq:sigma}
		\tag{\tcb{RED}}
		\begin{cases}
			\partial_t \sigma = n \sigma^{1-{1\over n}} \gamma^{- {1\over n}}\Delta \sigma + \sigma \gamma^{m-n \over n} \Big(\gamma^{n-m \over n}-\sigma^{m\over n}  \Big), \\
			\partial_t \gamma = \gamma^{m\over n} \Big( \sigma^{m\over n} - \gamma^{n-m \over n} \Big).
		\end{cases}
	\end{equation}
	with the zero Neumann boundary condition
	\begin{equation}
		\frac{\partial}{\partial \nu}\sigma(\textbf{x},t) =0.
	\end{equation}
	Here, a critical line for the relaxation terms is $\sigma = \gamma^{n-m \over m}$. We draw the critical line on $\gamma\sigma$-plane for two cases, $n>m$ and $n<m$. We first assume that our initial values are bounded and bounded away from zero. $$ M_l\le \min(\gamma_{0}) \le \max(\gamma_{0}) \le M_u, \quad \Tilde{M_l} \le \min(\sigma_{0}) \le \max(\sigma_{0}) \le \Tilde{M_u} $$
	\begin{proposition}
		Assume $n >m$. Let $(\gamma,\sigma)$ be a positive smooth solution to \eqref{eq:sigma} with $(\gamma_0,\sigma_0) \in L^\infty(\Omega)\times L^\infty(\Omega)$. Then, there exist positive constants $\Gamma_{-}, \Gamma_{+} >0$ such that $\Sigma_{-} = (\Gamma_{-})^{\frac{n-m}{m}}, \Sigma_{+} = (\Gamma_{+})^{\frac{n-m}{m}}$ where $\gamma_0, \sigma_0 \in D := [\Gamma_{-}, \Gamma_{+}]\times [\Sigma_{-}, \Sigma_{+}]$ and $D$ is a positively invariant region under the flow of the system \eqref{eq:sigma}.
		
		\label{max-principle}
		%for every $t \in [T, \infty), \hspace{0.1cm}\textbf{x}\in \Omega.$
	\end{proposition}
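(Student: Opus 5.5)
The plan is to apply the invariant-region theory of Chueh--Conley--Smoller \cite{chueh1977positively} to \eqref{eq:sigma}, with the invariant set a rectangle in the $(\gamma,\sigma)$-plane. Two structural facts make a rectangle admissible. First, the diffusion acts only on the $\sigma$-component, with scalar coefficient $n\sigma^{1-1/n}\gamma^{-1/n}$; this coefficient is positive and bounded on any compact subset of $(0,\infty)^2$. Second, both components have Neumann data. Under these conditions it suffices to check that the reaction vector field points (weakly) into $D$ on each of its four faces. To keep the solution away from $\partial D$, I would combine this with a maximum principle argument at the first exit time.

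\textbf{Choice of the constants.} The nullcline of both reaction terms is the curve $\sigma=\gamma^{\frac{n-m}{m}}$. When $n>m$ this curve is increasing in $\gamma$, and this is exactly where the hypothesis $n>m$ enters. I would choose
\[
\Gamma_-\le \min\big(M_l,\ \tilde M_l^{\frac{m}{n-m}}\big),\qquad \Gamma_+\ge \max\big(M_u,\ \tilde M_u^{\frac{m}{n-m}}\big),
\]
and set $\Sigma_\pm=\Gamma_\pm^{\frac{n-m}{m}}$. Monotonicity then gives $\Sigma_-\le \tilde M_l$ and $\Sigma_+\ge\tilde M_u$, so $(\gamma_0,\sigma_0)\in D$.

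\textbf{Face check.} On each face of $D$, one inequality suffices.
\begin{itemize}
\item On $\gamma=\Gamma_-$ with $\sigma\ge\Sigma_-$: we have $\partial_t\gamma=\Gamma_-^{m/n}\big(\sigma^{m/n}-\Gamma_-^{\frac{n-m}{n}}\big)\ge0$, because $\sigma^{m/n}\ge\Sigma_-^{m/n}=\Gamma_-^{\frac{n-m}{n}}$.
\item On $\gamma=\Gamma_+$: the same computation gives $\partial_t\gamma\le0$.
\item On $\sigma=\Sigma_-$ with $\gamma\ge\Gamma_-$: the reaction term is $\Sigma_-\gamma^{\frac{m-n}{n}}\big(\gamma^{\frac{n-m}{n}}-\Sigma_-^{m/n}\big)\ge0$. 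This uses $\gamma^{\frac{n-m}{n}}\ge\Gamma_-^{\frac{n-m}{n}}=\Sigma_-^{m/n}$, which holds because $\frac{n-m}{n}>0$.
\item On $\sigma=\Sigma_+$: the reaction term is $\le0$ by the same reasoning.
\end{itemize}

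\textbf{Maximum principle step.} Let $T^*$ be the first time the solution leaves $D$. Up to $T^*$ the solution stays in a compact subset of $(0,\infty)^2$, so the diffusion coefficient is uniformly elliptic and bounded there. The $\gamma$-equation is a pointwise ODE, so its faces are handled by ODE comparison. For $\sigma$, suppose $\min_x\sigma(\cdot,t)$ reaches $\Sigma_-$ at a point $x_0$. If $x_0$ is interior, then $\Delta\sigma(x_0)\ge0$. If $x_0\in\partial\Omega$, the Neumann condition together with the Hopf lemma excludes a strict boundary minimum. Together with the sign of the reaction term, this gives $\partial_t\sigma\ge0$ at $x_0$, and the maximum at $\Sigma_+$ is handled symmetrically.

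\textbf{Main obstacle.} The only delicate point is that all the face inequalities above are non-strict. On the corners of $D$ the reaction field vanishes exactly, since $(\Gamma_\pm,\Sigma_\pm)$ lie on the nullcline. To close the argument I would therefore run it for the perturbed rectangles $D_\varepsilon=[\Gamma_--\varepsilon,\Gamma_++\varepsilon]\times[\Sigma_-(\varepsilon),\Sigma_+(\varepsilon)]$. Here $\Sigma_-(\varepsilon)$ is chosen slightly below, and $\Sigma_+(\varepsilon)$ slightly above, the corresponding nullcline values, which makes every face inequality strict. Alternatively one can add $\pm\varepsilon$ to the reaction terms and use continuous dependence. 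With strict inflow a first touching of $\partial D_\varepsilon$ is impossible, so $D_\varepsilon$ is invariant for every $\varepsilon>0$; letting $\varepsilon\to0$ gives invariance of $D$.
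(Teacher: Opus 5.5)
Your choice of $\Gamma_\pm,\Sigma_\pm$ and your four face inequalities are correct. They coincide with the paper's proof, which consists of exactly these weak sign checks on $I_1,\dots,I_4$ (using $\Delta\sigma\le0$, resp.\ $\ge0$, at spatial extrema) plus an appeal to \cite{chueh1977positively}.

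The step that fails is your treatment of the corner degeneracy, which you rightly identified. No rectangle, however perturbed, has strict inflow on all four faces. The reaction part of \eqref{eq:sigma} satisfies $R_\sigma=-\frac{\sigma}{\gamma}R_\gamma$, because the relaxation leaves $u^n=\sigma\gamma$ unchanged. At a lower-left corner, the inflow conditions $R_\gamma\ge0$ and $R_\sigma\ge0$ therefore force $R_\gamma=R_\sigma=0$. So the corner must lie on the nullcline $\sigma=\gamma^{\frac{n-m}{m}}$ and the field must vanish there; the same holds at the upper-right corner.

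For your $D_\varepsilon$ the field actually points outward somewhere. If $\Sigma_-(\varepsilon)<(\Gamma_--\varepsilon)^{\frac{n-m}{m}}$, then $\partial_t\gamma<0$ on the part of the face $\gamma=\Gamma_--\varepsilon$ below the nullcline. If $\Sigma_-(\varepsilon)$ lies above that value, then $R_\sigma<0$ on part of the bottom face instead. In either case the spatially homogeneous solution starting at that corner leaves $D_\varepsilon$ immediately. So $D_\varepsilon$ is not invariant, and the first-exit argument gives no contradiction. Your fallback of adding $\pm\varepsilon$ to the reaction terms has the same defect if the shifts are constants: a shift that helps on $\gamma=\Gamma_-$ hurts on $\gamma=\Gamma_+$.

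The repair is to let the enlargement grow in time rather than be static.
\begin{itemize}
\item On a compact neighbourhood of $D$ in $(0,\infty)^2$ the reaction field $R$ is Lipschitz with some constant $L$. Set $\delta(t)=\varepsilon e^{Ct}$ with $C>2L$, and $D_{\delta(t)}=[\Gamma_--\delta,\Gamma_++\delta]\times[\Sigma_--\delta,\Sigma_++\delta]$.
\item Take a point $v$ on, say, the face $\gamma=\Gamma_--\delta$. Its componentwise clamp $Pv$ onto $D$ lies on $I_1$, where $R_\gamma\ge0$. Hence $R_\gamma(v)\ge -L|v-Pv|\ge-2L\delta>-\delta'(t)$.
\item A first touching at $(x_0,T^*)$ would require $\partial_t\gamma(x_0,T^*)\le-\delta'(T^*)$, which contradicts the previous bound.
\item The $\sigma$-faces are handled the same way, using $\Delta\sigma\ge0$ (resp.\ $\le0$) at the touching point. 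This sign also holds at boundary points because of the Neumann condition, so the Hopf lemma is not needed.
\item The solution therefore stays in $D_{\delta(t)}$ on any $[0,T]$ for which $\varepsilon e^{CT}$ is small. Letting $\varepsilon\to0$ gives invariance of $D$.
\end{itemize}
Equivalently, you can apply Gronwall to the maximal distance of $(\gamma,\sigma)(\cdot,t)$ from $D$. Either way, only the weak face inequalities you already verified are needed.
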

	%\tcr{still not sure how to better state this theorem.} 
	\begin{remark}
		It follows that since $u = (\sigma \gamma)^{\frac{1}{n}}$, we also have 
		\begin{equation*}
			\Gamma_{-}^{\frac{1}{m}} \leq u(\textbf{x},t) \leq \Gamma_{+}^{\frac{1}{m}}.
		\end{equation*}
		for every $t \in [T, \infty), \hspace{0.1cm}\textbf{x}\in \Omega.$
	\end{remark}
	\begin{proof}
		For two positive constants $\Gamma_{-}$ and $\Gamma_{+}$, we define a rectangular domain  $$ D := \left\{ (s,\Tilde{s}) \in \R_+^2 \ | \ \Gamma_{-} \le s \le \Gamma_{+}, \ \Sigma_{-} \le \Tilde{s} \le \Sigma_{+} \right\}.  $$ And the functions $(\gamma,\sigma)(x,t)$ are in the domain $D$ at time $T$ if they satisfy $$\Gamma_{-} \le \min_x(\gamma)(T) \le \max_x(\gamma)(T) \le \Gamma_{+}, \quad \Sigma_{-} \le \min_x(\sigma)(T) \le \max_x(\sigma)(T) \le \Sigma_{+}. $$
		We claim that $D$ is an invariant region, that is, if $(\gamma,\sigma)$ is in the domain $D$ at time $T$, then it is trapped in $D$ for every $t \ge T$. It suffices to show that for every $(\gamma,\sigma) \in \partial D$, we have $$ (\partial_t \gamma, \partial_t \sigma) \cdot \nu \le 0,$$ where $\nu$ is an outward normal vector of $D$ on $(\gamma,\sigma)$. We divide the boundary of $D$ into 4 subdomains, 
		\begin{align*}
			I_1 &= \left\{\Gamma_{-}\right\} \times [\Sigma_{-}, \Sigma_{+}], \\ 
			I_2 &= [\Gamma_{-}, \Gamma_{+}] \times \left\{ \Sigma_{+} \right\}, \\
			I_3 &= \left\{\Gamma_{+}\right\} \times [\Sigma_{-}, \Sigma_{+}], \\ 
			I_4 &= [\Gamma_{-}, \Gamma_{+}] \times \left\{ \Sigma_{-} \right\}.
		\end{align*}
		The normal vectors on each domain are $(-1,0)$, $(0,1)$, $(1,0)$, and $(0,-1)$ respectively. Along the line $I_1$, it is placed above the critical line $\sigma = \gamma^{n-m \over m}$ and then, $\sigma^{m\over n} - \gamma^{n-m \over n} \ge 0$.
		$$ (\partial_t v, \partial_t \sigma) \cdot \nu = -\partial_t \gamma = -\gamma^{m\over n} \Big( \sigma^{m\over n} - \gamma^{n-m \over n} \Big) \le 0.$$
		Along the line $I_2$, again it satisfies $\sigma^{m\over n} - \gamma^{n-m \over n} \ge 0$. Moreover, since $\Sigma_{+}$ is the local maximum of the trapped function $\sigma$ on the space domain $\Omega$, $\Delta \sigma \le 0$. Therefore, 
		$$ (\partial_t \gamma, \partial_t \sigma) \cdot \nu = \partial_t \sigma =  n \sigma^{1-{1\over n}} \gamma^{- {1\over n}}\Delta \sigma + \sigma \gamma^{m-n \over n} \Big( \gamma^{n-m \over n}-\sigma^{m\over n}  \Big) \le 0.$$ Similarly, along the line $I_3$, $\sigma^{m\over n} - \gamma^{n-m \over n} \le 0$ implies $$ (\partial_t \gamma, \partial_t \sigma) \cdot \nu = \partial_t \gamma = \gamma^{m\over n} \Big( \sigma^{m\over n} - \gamma^{n-m \over n} \Big) \le 0.$$ Along the line $I_4$, $\sigma^{m\over n} - \gamma^{n-m \over n} \le 0$ and $\Delta \sigma \ge 0$ imply $$ (\partial_t \gamma, \partial_t \sigma) \cdot \nu = - \partial_t \sigma =  - n \sigma^{1-{1\over n}} \gamma^{- {1\over n}}\Delta \sigma - \sigma \gamma^{m-n \over n} \Big( \gamma^{n-m \over n}-\sigma^{m\over n}  \Big) \le 0.$$ Therefore, $D$ is an invariant region and 
		
		\begin{equation*}
			\Gamma_{-} \leq \gamma(\textbf{x},t) \leq \Gamma_{+},
		\end{equation*}
		\begin{equation*}
			\Sigma_{-}\leq \sigma(\textbf{x},t) \leq \Sigma_{+},
		\end{equation*}
		%\begin{equation*}
		%   A^{\frac{1}{m}} \leq u(\textbf{x},t) \leq B^{\frac{1}{m}}.
		%\end{equation*}
		%for every $t \in [T, \infty).$
	\end{proof}
	We next derive an estimate for \eqref{eq:sigma}. 
	\begin{lemma}[Estimate for $\sigma$.] Assume that $(u, \gamma)$ is a solution of \eqref{pmi} emanating from initial  data $(u_0,\gamma_0)\in L^\infty(\Omega)\times L^\infty(\Omega)$ satisfying \eqref{initial-data-restrict}. Define $\sigma := \frac{u^n}{\gamma}$. Then for any $n>m$, the following estimate holds,
		\begin{equation}
			\label{estimate:sigma}
			\frac{d}{dt}\int_\Omega |\nabla \sigma|^2 d\x + n\int_{\Omega}\frac{u^{n-1}}{\gamma} |\Delta \sigma|^2 d\x \leq \frac{1}{n}\int_{\Omega} \sigma^{1+\frac{1}{n}} \gamma^{\frac{1}{n}-2} \big(\partial_t \gamma\big)^2 d\x.
		\end{equation}   
	\end{lemma}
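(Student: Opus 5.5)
The plan is a direct energy computation on the $\sigma$-equation: test it against $-\Delta\sigma$ and absorb the coupling term by Young's inequality. The first step is to write the evolution equation for $\sigma=u^n/\gamma$ in a form that isolates the diffusion term. By the chain rule and $\eqref{pmi}_1$,
\begin{equation*}
\partial_t\sigma=\frac{n u^{n-1}}{\gamma}\,\partial_t u-\frac{\sigma}{\gamma}\,\partial_t\gamma
=\frac{n u^{n-1}}{\gamma}\,\Delta\sigma-\frac{\sigma}{\gamma}\,\partial_t\gamma .
\end{equation*}
I keep $\partial_t\gamma$ unexpanded, since the right-hand side of the target estimate involves exactly $(\partial_t\gamma)^2$.

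Next I multiply this identity by $-\Delta\sigma$ and integrate over $\Omega$. For the time-derivative term I use
\begin{equation*}
-\int_\Omega \partial_t\sigma\,\Delta\sigma\,d\x=\frac12\frac{d}{dt}\int_\Omega|\nabla\sigma|^2\,d\x ,
\end{equation*}
which requires $\partial_\nu\sigma=0$ on $\partial\Omega$. This follows from $\partial_\nu\sigma=\frac{nu^{n-1}}{\gamma}\partial_\nu u-\frac{u^n}{\gamma^2}\partial_\nu\gamma$ together with the Neumann conditions \eqref{nbc}, which are compatible with the data by \eqref{initial-data-restrict}. The result is
\begin{equation*}
\frac12\frac{d}{dt}\int_\Omega|\nabla\sigma|^2\,d\x+n\int_\Omega\frac{u^{n-1}}{\gamma}|\Delta\sigma|^2\,d\x=\int_\Omega\frac{\sigma}{\gamma}\,\partial_t\gamma\,\Delta\sigma\,d\x .
\end{equation*}

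The only real choice is how to split the right-hand side. I apply Young's inequality with the weight $u^{n-1}/\gamma$, so that half of the dissipation absorbs it:
\begin{equation*}
\Big|\frac{\sigma}{\gamma}\partial_t\gamma\,\Delta\sigma\Big|\le\frac n2\frac{u^{n-1}}{\gamma}|\Delta\sigma|^2+\frac{1}{2n}\frac{\sigma^2}{\gamma\,u^{n-1}}(\partial_t\gamma)^2 .
\end{equation*}
After absorbing the first term and multiplying through by $2$, it remains to rewrite the weight of the second term in the variables $(\sigma,\gamma)$. Using $u=(\sigma\gamma)^{1/n}$, hence $u^{n-1}=(\sigma\gamma)^{(n-1)/n}$, I get
\begin{equation*}
\frac{\sigma^2}{\gamma\,u^{n-1}}=\sigma^{2-\frac{n-1}{n}}\gamma^{-1-\frac{n-1}{n}}=\sigma^{1+\frac1n}\gamma^{\frac1n-2},
\end{equation*}
which gives exactly \eqref{estimate:sigma}.

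Nothing in this computation uses $n>m$. That hypothesis presumably matters only later, through Proposition~\ref{max-principle}, to bound the weights in \eqref{estimate:sigma} from above and below. The main point needing care is justifying the integration by parts: the boundary term from $\partial_\nu\sigma$ must vanish, and $\sigma$ must be regular enough that $\Delta\sigma\in L^2$. I would assume a positive smooth solution, as in Lemma~\ref{lem:energy}, so that both hold.
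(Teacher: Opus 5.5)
Your proposal is correct and is essentially the paper's proof: the paper also tests the $\sigma$-equation against $\Delta\sigma$, with the relaxation term rewritten as $-\frac{\sigma}{\gamma}\partial_t\gamma$, integrates by parts using $\partial_\nu\sigma=0$, and absorbs the cross term with the same weighted Young inequality, where the weight is $\frac{u^{n-1}}{\gamma}=\sigma^{1-\frac1n}\gamma^{-\frac1n}$. You are also right that $n>m$ plays no role in this computation; it enters only later, through the bounds of Proposition~\ref{max-principle}.
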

	\begin{proof}
		By multiplying $\eqref{eq:sigma}_1$ by $\Delta \sigma$, we obtain 
		\begin{align*}
			\int_\Omega \Delta \sigma \partial_t \sigma &= \int_\Omega n \sigma^{1-\frac{1}{n}}\gamma^{-\frac{1}{n}} |\Delta \sigma|^2 d\x + \int_\Omega \sigma \gamma^{\frac{m-n}{n}}\big(\gamma^{\frac{n-m}{m}}-\sigma^{\frac{m}{n}}\big)\Delta \sigma d\x\\
			&= n\int_{\Omega} |\Delta \sigma|^2 \sigma^{1-\frac{1}{n}}\gamma^{-\frac{1}{n}} d\x - \int_{\Omega} \frac{\sigma}{\gamma}\Delta \sigma (\partial_t \gamma) d\x 
		\end{align*}
		integrating by parts we arrive at 
		\begin{align*}
			\frac{d}{dt}\frac{1}{2} \int_{\Omega} |\nabla \sigma|^2 d\x + n \int_{\Omega}|\Delta \sigma|^2 \sigma^{1-\frac{1}{n}} \gamma^{-\frac{1}{n}}d\x &= \int_{\Omega} \frac{\sigma}{\gamma}\Delta \sigma (\partial_t \gamma) d\x \\
			&= \int_\Omega \Big(\sigma^{\frac{n+1}{2n}}\gamma^{\frac{1-2n}{2n}}\Big)\partial_t \gamma \Big(\sigma^{\frac{n-1}{2n}}\gamma^{-\frac{1}{2n}}\Big)\Delta \sigma d\x \\
			&\leq \frac{1}{2n}\int_{\Omega} \sigma^{\frac{n+1}{n}}\gamma^{\frac{1-2n}{n}} (\partial_t \gamma)^2 d\x + \frac{n}{2}\int_\Omega \sigma^{\frac{n-1}{n}}\gamma^{-\frac{1}{n}} |\Delta \sigma|^2 d\x, 
		\end{align*}
		absorbing the second term above to the left-hand side, we obtain 
		\begin{equation}
			\frac{d}{dt}\int_\Omega |\nabla \sigma|^2 d\x + n \int_\Omega \frac{u^{n-1}}{\gamma} |\Delta \sigma|^2 d\x \leq \frac{1}{n}\int_{\Omega}\sigma^{1+\frac{1}{n}}\gamma^{\frac{1}{n}-2}(\partial_t \gamma)^2 d\x. 
		\end{equation}
		Using $\eqref{eq:sigma}_2$ we further get 
		\begin{equation}
			\label{estimate:sigma2}
			\frac{d}{dt}\int_\Omega |\nabla \sigma|^2 d\x + n \int_\Omega \frac{u^{n-1}}{\gamma} |\Delta \sigma|^2 d\x \leq \frac{1}{n}\int_{\Omega} \frac{u^{n+1}}{\gamma^3} |u^m-\gamma|^2 d\x.
		\end{equation}
	\end{proof}
	The main theorem in this section is the following.
	\begin{theorem}
		Let $\Omega \subset \mathbb{R}^d$ be an open bounded subset for $d \geq 1$. Let $(u,\gamma)$ be a positive smooth solution to \eqref{pmi} 
		with initial data $(u_0,\gamma_0)\in L^\infty(\Omega)\times L^\infty(\Omega)$ satisfying \eqref{initial-data-restrict}. Then, if $n >m$, $(u,\gamma)$ converges to the equilibrium state in the $L^2$-sense as $t\rightarrow \infty$. That is, 
		\begin{equation}
			\lim_{t\rightarrow \infty}\Big\|\big(u(\textbf{x},t), \gamma(\textbf{x},t)\big)-(1,1)\Big\|_{L^2(\Omega)} = 0.
		\end{equation}
        \label{main-theorem}
	\end{theorem}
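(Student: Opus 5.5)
The plan is to combine the energy dissipation identity of Lemma~\ref{lem:energy} with the uniform bounds of Proposition~\ref{max-principle}, and then conclude with a LaSalle-type argument. Normalize $|\Omega|=1$ and $\int_\Omega u_0\,d\x=1$, so that the equilibrium is $(1,1)$. The first step is to use Proposition~\ref{max-principle}, choosing $\Gamma_-$ small and $\Gamma_+$ large so that the initial data lie in $D$. This gives, for all $t\ge0$,
\[
0<\Gamma_-\le\gamma\le\Gamma_+,\qquad 0<\Sigma_-\le\sigma\le\Sigma_+,
\]
and therefore $u=(\sigma\gamma)^{1/n}$ is also bounded above and below by positive constants. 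With these bounds in hand, all weights in the energy identity and in \eqref{estimate:sigma2} are uniformly comparable to constants.

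Next, integrate the identity of Lemma~\ref{lem:energy} in time. Since $F\ge0$ for $m<n+1$, this yields
\[
\int_0^\infty\!\!\int_\Omega |\nabla\sigma|^2\,d\x\,dt<\infty,\qquad \int_0^\infty\!\!\int_\Omega \gamma^{-2}\big(u^{n+1}-\gamma^{\frac{n+1}{m}}\big)(u^m-\gamma)\,d\x\,dt<\infty .
\]
On the bounded region, the monotone-difference factor satisfies $(u^{n+1}-\gamma^{\frac{n+1}{m}})(u^m-\gamma)\ge c\,|u^m-\gamma|^2$, because $s\mapsto s^{(n+1)/m}$ has derivative bounded below on a compact subset of $(0,\infty)$. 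Hence $\int_0^\infty\|u^m-\gamma\|_{L^2}^2\,dt<\infty$. Feeding this into \eqref{estimate:sigma2}, whose right-hand side is bounded by $C\|u^m-\gamma\|_{L^2}^2$, we get that $y(t):=\|\nabla\sigma\|_{L^2}^2$ satisfies $y'\le g$ with $g\in L^1(0,\infty)$, and also $y\in L^1(0,\infty)$. The standard lemma then gives $y(t)\to0$. Similarly, I would show that $h(t):=\|u^m-\gamma\|_{L^2}^2$ has $h'$ bounded above uniformly in time, and hence $h\to0$. Here
\[
\partial_t(u^m-\gamma)=m u^{m-1}\Delta\!\Big(\frac{u^n}{\gamma}\Big)-(u^m-\gamma).
\]
To control the diffusion term I would integrate by parts, which brings in $\nabla\sigma$ and $\nabla u^m$, and bound these using the $\nabla\sigma$ decay together with the identity $u^m=(\sigma\gamma)^{m/n}$. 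This is the step I expect to be the main obstacle. The issue is that $\nabla\gamma$ is not directly controlled, since the $\gamma$-equation has no diffusion. I would handle it by writing $\partial_t\nabla\gamma=\nabla u^m-\nabla\gamma$ and using $u^m=(\sigma\gamma)^{m/n}$, so that $\nabla u^m=\tfrac mn u^m(\nabla\sigma/\sigma+\nabla\gamma/\gamma)$. This makes $\nabla\gamma$ satisfy a linear ODE
\[
\partial_t\nabla\gamma=\Big(\tfrac mn\tfrac{u^m}{\gamma}-1\Big)\nabla\gamma+\tfrac mn\tfrac{u^m}{\sigma}\nabla\sigma .
\]
Its damping coefficient tends to $\tfrac mn-1<0$ as $u^m/\gamma\to1$, and this is exactly where $n>m$ enters, so $\|\nabla\gamma\|_{L^2}$ stays bounded and in fact decays. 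As a fallback, if this pointwise damping is hard to justify before $u^m-\gamma$ is known to be small, I would instead settle for $h\to0$ along a time sequence and then upgrade it using the monotone energy.

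Finally, $\nabla\sigma\to0$ in $L^2$ together with the Poincaré inequality gives $\sigma-\bar\sigma(t)\to0$, where $\bar\sigma(t)$ is the spatial mean. Combined with $u^m-\gamma\to0$, this gives $u^{n-m}\approx\sigma$, so $u-c(t)\to0$ in $L^2$ for some spatial constant $c(t)$. Conservation of mass \eqref{conslaw} then forces $c(t)\to1$, and consequently $\gamma\to1$ as well. The uniform $L^\infty$ bounds convert all of these convergences into $L^2$ convergence of $(u,\gamma)$ to $(1,1)$.
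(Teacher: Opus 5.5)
The step you flagged is a genuine gap as written. In your equation for $\nabla\gamma$, the coefficient $\frac mn\frac{u^m}{\gamma}-1$ is only known to be bounded on the invariant region, not negative. The energy gives $\int_0^\infty\|u^m-\gamma\|_{L^2}^2\,dt<\infty$, which says nothing pointwise about $u^m/\gamma$. Moreover, smallness of $u^m-\gamma$ is exactly what this step is supposed to deliver, so the damping argument is circular. With a merely bounded coefficient, Gronwall only gives an exponentially growing bound on $\|\nabla\gamma\|_{L^2}$, so the uniform bound on $h'$ is not established.

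The missing idea, which is exactly the paper's, is to weight the gradient. Since $u^m=\sigma^{m/n}\gamma^{m/n}$, the relaxation equation is exactly linear in $\gamma^{1-m/n}$:
\[
\frac{n}{n-m}\,\partial_t\gamma^{1-\frac mn}=-\gamma^{1-\frac mn}+\sigma^{\frac mn}.
\]
Equivalently, $\gamma^{-m/n}\nabla\gamma$ obeys a linear equation of the same form as yours, but with damping coefficient exactly $\frac mn-1$: the time derivative of the weight cancels the $\frac mn\frac{u^m}{\gamma}$ part. So $\nabla\gamma^{1-m/n}$ is damped at the fixed rate $(n-m)/n$ and forced by $\nabla\sigma^{m/n}\to0$. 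This gives $\nabla\gamma\to0$ and then $\nabla u\to0$. The paper then concludes by Poincar\'e plus mass conservation for $u$ and Duhamel for $\gamma$, and never needs $h\to0$.

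Within your own route there is an even shorter repair: do not discard the dissipation term in \eqref{estimate:sigma}. Integrating it in time with the invariant-region bounds gives $\int_0^\infty\|\Delta\sigma\|_{L^2}^2\,dt<\infty$. Since $\partial_t(u^m-\gamma)=mu^{m-1}\Delta\sigma-(u^m-\gamma)$, Cauchy--Schwarz and Young yield $h'\le C\big(h+\|\Delta\sigma\|_{L^2}^2\big)$, whose right-hand side is in $L^1(0,\infty)$. The lemma you used for $y$ then gives $h\to0$, with no integration by parts and no control of $\nabla\gamma$, and your final paragraph finishes the proof.

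Your energy fallback can also be made to work, but the ``upgrade'' is the substantive part and must be stated. Along $t_k$ with $h(t_k)\to0$, your final paragraph gives $(u,\gamma)(t_k)\to(1,1)$, hence $E(t_k)\to\frac{n+1}{n-m+1}$ for $E(t):=\int_\Omega F(u,\gamma)\,d\x$. Monotonicity of $E$ then gives the same limit along all $t$. Finally you need the coercivity
\[
E(t)-\tfrac{n+1}{n-m+1}\ge\tfrac{n+1}{n-m+1}\big(\textstyle\int_\Omega u^{n-m+1}\,d\x-1\big)\ge c\|u-1\|_{L^2}^2 .
\]
This follows from the Young-inequality lower bound in the proof of Lemma~\ref{lem:energy}, strict convexity of $s^{n-m+1}$ on the invariant range, and $\int_\Omega u\,d\x=1$. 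It converts energy convergence into $u(t)\to1$, after which $\gamma\to1$ by Duhamel.
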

	Before proving Theorem \ref{main-theorem}, we first state some additional results. 
	\begin{lemma}
		Let $(u,\gamma)$ be a solution to \eqref{pmi} with $(u_0,\gamma_0) \in L^\infty(\Omega)\times L^\infty(\Omega)$ satisfying \eqref{initial-data-restrict} and let $n>m$. Define 
		\begin{equation}
			g(t):= \frac{\big(\Gamma_{+}\big)^{\frac{n+1}{m}}}{n\big(\Gamma_{+}\big)^3} \int_\Omega |u^m-\gamma|^2 d\x
			\label{defintion:g}
		\end{equation}
		There is a constant $C_{n,\Gamma_{-},\Gamma_{+}} > 0$, depending on $n$ and $\Gamma_{-}, \Gamma_{+}$ such that 
		\begin{equation}
			\int_0^\infty g(s) ds \leq C_{n,\Gamma_{-},\Gamma_{+}}.
		\end{equation}
		\label{bound-g}
	\end{lemma}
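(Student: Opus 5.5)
We use the energy dissipation identity of Lemma~\ref{lem:energy}, integrated in time, together with the two-sided bounds of Proposition~\ref{max-principle}. Integrating the identity over $(0,T)$ and discarding the nonnegative gradient term gives
\begin{equation*}
\int_0^T\!\!\int_\Omega \frac{1}{\gamma^2}\Big((u^m)^{\frac{n+1}{m}}-\gamma^{\frac{n+1}{m}}\Big)(u^m-\gamma)\,d\x\,dt \le \int_\Omega F(u_0,\gamma_0)\,d\x - \int_\Omega F(u(T),\gamma(T))\,d\x .
\end{equation*}
Since $n>m$, we have $n-m+1>0$, so $F\ge 0$. The right-hand side is therefore bounded by $\int_\Omega F(u_0,\gamma_0)\,d\x$, which is finite because the data lie in $L^\infty$ and are bounded away from zero. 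Letting $T\to\infty$, the space-time integral of the relaxation dissipation is finite.

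Next we bound the integrand from below by a multiple of $|u^m-\gamma|^2$. Set $p=\frac{n+1}{m}>1$, $x=u^m$ and $y=\gamma$. By the mean value theorem,
\begin{equation*}
(x^p-y^p)(x-y)=p\,\theta^{p-1}(x-y)^2,
\end{equation*}
where $\theta$ lies between $x$ and $y$. Proposition~\ref{max-principle} and the subsequent remark give $\Gamma_-\le \gamma\le\Gamma_+$ and $\Gamma_-\le u^m\le\Gamma_+$, so $\theta\ge\Gamma_-$. Using also $\gamma^{-2}\ge \Gamma_+^{-2}$, the integrand is at least
\begin{equation*}
\frac{p\,\Gamma_-^{p-1}}{\Gamma_+^2}\,|u^m-\gamma|^2 .
\end{equation*}
Consequently
\begin{equation*}
\int_0^\infty\!\!\int_\Omega |u^m-\gamma|^2\,d\x\,dt \le \frac{\Gamma_+^2}{p\,\Gamma_-^{p-1}}\int_\Omega F(u_0,\gamma_0)\,d\x .
\end{equation*}

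Multiplying by the prefactor $\Gamma_+^{\frac{n+1}{m}}/(n\Gamma_+^3)$ in \eqref{defintion:g} yields the claim. The constant $C_{n,\Gamma_-,\Gamma_+}$ is this product, with $F(u_0,\gamma_0)$ itself controlled by $\Gamma_\pm$ and $n,m$ through $u_0^{n+1}/\gamma_0\le \Gamma_+^{(n+1)/m}/\Gamma_-$ (times $|\Omega|$).

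The main subtle point is that the invariant-region bounds must hold for all $t\ge0$, not just after some time $T$. We therefore choose $\Gamma_\pm$ so that the initial data already lie in $D$: $\Gamma_-$ small and $\Gamma_+$ large enough that $[\Gamma_-,\Gamma_+]\times[\Sigma_-,\Sigma_+]$ contains the range of $(\gamma_0,\sigma_0)$. This is possible because $\Sigma_\pm=\Gamma_\pm^{(n-m)/m}$ range over all of $(0,\infty)$ when $n>m$. A second point is the sign of $\gamma^{\frac{n-m+1}{m}}$'s coefficient in $F$, which is positive since $n>m$; this is what allows us to drop $F(T)$. No regularity beyond that assumed (smooth positive solution) is needed.
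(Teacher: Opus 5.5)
Your proof is correct and takes the same route as the paper: integrate the energy identity of Lemma~\ref{lem:energy} in time, drop the nonnegative terms, and bound the relaxation dissipation from below using the mean value theorem together with the invariant-region bounds of Proposition~\ref{max-principle}. Your version is also more careful than the paper's. The paper omits the factor $p\theta^{p-1}$ from the mean value theorem and bounds $\gamma^{-2}$ below by $\Gamma_-^{-2}$, which is the wrong direction; you instead correctly use $\theta\ge\Gamma_-$ and $\gamma^{-2}\ge\Gamma_+^{-2}$.
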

	\begin{proof}
		Recall the energy estimate from Lemma \ref{energy}
		\begin{equation*}
			\frac{d}{dt}\int_{\Omega}\Bigg(\frac{u^{n+1}}{\gamma}+\frac{m}{n-m+1}\gamma^{\frac{n-m+1}{m}}\Bigg) d\x + (n+1)\int_{\Omega} |\nabla \frac{u^n}{\gamma}|^2 d\x + \int_{\Omega}\frac{1}{\gamma^2} \Bigg((u^m)^{\frac{n+1}{m}}-\gamma^{\frac{n+1}{m}}\Bigg)(u^m-\gamma)d\x.
		\end{equation*}
		Note that by the mean value theorem we have 
		\begin{equation*}
			\int_{\Omega}\frac{1}{\gamma^2}|u^m-\gamma|^2 d\x \leq \int_\Omega \frac{1}{\gamma^2} \Bigg((u^m)^{\frac{n+1}{m}}-\gamma^{\frac{n+1}{m}}\Bigg)(u^m-\gamma)d\x.
		\end{equation*}
		Integrating in time and using Proposition \ref{max-principle},  
		\begin{multline*}
			\int_\Omega \Bigg(\frac{u^{n+1}}{\gamma}+\frac{m}{n-m+1}\gamma^{\frac{n-m+1}{m}}\Bigg)d\x \Bigg|_{t=T} + (n+1)\int_0^T \int_{\Omega} |\nabla \frac{u^n}{\gamma}|^2 d\x ds \\+ \int_0^T \int_{\Omega} \frac{1}{\big(\Gamma_{-}\big)^2} |u^m-\gamma|^2 d\x ds \leq \int_{\Omega} \frac{u_0^{n+1}}{\gamma_0}+\frac{m}{n-m+1} \gamma_0^{\frac{n-m+1}{m}}d\x = \Tilde{C},
		\end{multline*}
		for any positive time $T>0$, so that, 
		\begin{equation*}
			\int_0^\infty g(\tau) d\tau \leq \limsup_{T \to \infty} \int_0^T g(\tau) d\tau \leq \frac{n\Tilde{C}(\Gamma_{-})^2(\Gamma_{+})^3}{(\Gamma_{+})^{\frac{n+1}{m}}} : = C_{n, \Gamma_{-},\Gamma_{+}}.
		\end{equation*} 
	\end{proof}
	\begin{lemma}
		Let $(u,\gamma)$ be a solution to \eqref{pmi} with $(u_0,\gamma_0)\in L^\infty(\Omega)\times L^\infty(\Omega)$ satisfying \eqref{initial-data-restrict}. Assume $n >m$ and let $\sigma := \frac{u^n}{\gamma}$. Define $\varphi(t) := \int_{\Omega} |\nabla \sigma|^2 d\x$. Then $\varphi(t)$ satisfies 
		$\lim_{t\rightarrow \infty} \varphi(t) =0.$
		\label{asympt-sigma}
	\end{lemma}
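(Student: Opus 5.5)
The plan is a standard ``integrable plus bounded-derivative'' (Barbalat-type) argument. We combine the dissipation bounds from Lemma~\ref{lem:energy} and Lemma~\ref{bound-g} with the differential inequality \eqref{estimate:sigma2}.

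First, we show that $\varphi\in L^1(0,\infty)$. The energy identity of Lemma~\ref{lem:energy}, integrated in time, gives
\begin{equation*}
(n+1)\int_0^\infty\int_\Omega |\nabla \sigma|^2\,d\x\,dt \le \tilde C .
\end{equation*}
This uses that $F\ge 0$ for $m<n+1$, which holds since $n>m$. Hence $\int_0^\infty \varphi(t)\,dt<\infty$.

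Second, we bound the positive part of $\varphi'$ by an integrable function. By Proposition~\ref{max-principle} and the Remark after it, the quantities $u$, $\gamma$, $\sigma$ stay in a fixed compact subset of $(0,\infty)$. So in \eqref{estimate:sigma2} the coefficient $u^{n+1}/\gamma^3$ is bounded by a constant depending only on $n,\Gamma_\pm$. This yields
\begin{equation*}
\varphi'(t) \le \frac{1}{n}\int_\Omega \frac{u^{n+1}}{\gamma^3}|u^m-\gamma|^2\,d\x \le C\, g(t),
\end{equation*}
with $g$ as in \eqref{defintion:g}, up to adjusting the constant. We drop the nonnegative term $n\int \frac{u^{n-1}}{\gamma}|\Delta\sigma|^2$. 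By Lemma~\ref{bound-g}, $g\in L^1(0,\infty)$.

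Third, we conclude. Let $t_k\to\infty$ be arbitrary. Since $\varphi\ge 0$ is integrable, for any $\epsilon>0$ and any large $T$ we can choose $s\in[t-1,t]$ with $\varphi(s)\le \int_{t-1}^t\varphi$, for $t\ge T+1$. Integrating the inequality for $\varphi'$ from $s$ to $t$ gives
\begin{equation*}
\varphi(t)\le \varphi(s)+C\int_{s}^{t} g \le \int_{t-1}^t\varphi\,d\tau + C\int_{t-1}^\infty g\,d\tau .
\end{equation*}
Both tails tend to zero as $t\to\infty$, because $\varphi,g\in L^1(0,\infty)$. Hence $\varphi(t)\to 0$.

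The main obstacle is the second step. It needs the invariant-region bounds to hold uniformly in time, so that the weight in \eqref{estimate:sigma2} is uniformly controlled. It also needs $\varphi$ to be absolutely continuous, which we take from the assumed smoothness of the solution. A secondary point is the sign of $F$: it is automatic for $m<n+1$, but we must make sure the energy bound indeed controls $\int_0^\infty\varphi$ without loss.
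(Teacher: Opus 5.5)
Your argument is correct, but it closes the estimate differently from the paper.

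The paper keeps the dissipation term $n\int_\Omega \frac{u^{n-1}}{\gamma}|\Delta\sigma|^2\,d\x$ in \eqref{estimate:sigma2} and bounds its coefficient from below using Proposition~\ref{max-principle}. It then applies the Neumann Poincar\'e-type inequality $\|\nabla\sigma\|_{L^2}\le C\|\Delta\sigma\|_{L^2}$ to obtain the damped inequality $\varphi'+C_1\varphi\le g$. It concludes from Duhamel's formula, splitting $\int_0^t e^{-C_1(t-\tau)}g(\tau)\,d\tau$ at $\tau=t/2$ and using $g\in L^1(0,\infty)$.

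You discard the $\Delta\sigma$ term altogether. In place of the damping you use the time-integrated stress dissipation $(n+1)\int_0^\infty\varphi\,dt\le\tilde C$ from the energy identity, and finish with a Barbalat-type averaging argument. The paper actually writes this bound down in the proof of Lemma~\ref{bound-g} but never uses it for $\varphi$.

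Your route buys economy. It needs only the one-sided inequality $\varphi'\le Cg$, so it requires no elliptic estimate for $\Delta\sigma$ and no positive lower bound on $u^{n-1}/\gamma$. Your ``up to adjusting the constant'' is also the right way to treat the weight, since the correct bound is $u^{n+1}/\gamma^3\le \Gamma_+^{(n+1)/m}\Gamma_-^{-3}$.

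The paper's route buys a bound that does not require $\varphi\in L^1(0,\infty)$:
$\varphi(t)\le e^{-C_1t}\varphi(0)+Ce^{-C_1t/2}+\int_{t/2}^\infty g\,d\tau$.
This would convert any decay rate for the tail of $g$ into a rate for $\varphi$. Your term $\int_{t-1}^t\varphi\,d\tau$ carries no rate information.

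The references to $t_k\to\infty$ and to $\epsilon$ in your third step are never used and can be dropped.
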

	\begin{proof}
		By Lemma \ref{estimate:sigma}, we have the estimate 
		\begin{equation*}
			\frac{d}{dt} \int_{\Omega} |\nabla \sigma|^2 d\x + n \int_{\Omega}\frac{u^{n-1}}{\gamma} |\Delta \sigma|^2 d\x \leq \frac{1}{n}\int_{\Omega} \frac{u^{n+1}}{\gamma^3}|u^m-\gamma|^2 d\x, 
		\end{equation*}
		since $n >m$, we invoke Proposition  \ref{max-principle} to obtain 
		\begin{equation*}
			\frac{d}{dt} \int_{\Omega} |\nabla \sigma|^2 d\x + n\int_{\Omega} \frac{(\Gamma_{-})^{\frac{n-1}{m}}}{(\Gamma_{-})} |\Delta \sigma|^2 d\x \leq \frac{1}{n }\int_{\Omega}\frac{(\Gamma_{+})^{\frac{n+1}{m}}}{(\Gamma_{+})^3}|u^m-\gamma|^2 d\x,  
		\end{equation*}
		applying the Poincar\'e inequality to the second term on the left-hand side:
		\begin{equation*}
			\frac{d}{dt}\int_{\Omega}|\nabla \sigma|^2 d\x +n C\frac{(\Gamma_{-})^{\frac{n-1}{m}}}{(\Gamma_{-})} \int_{\Omega}|\nabla \sigma|^2 d\x \leq \frac{1}{n }\int_{\Omega}\frac{(\Gamma_{+})^{\frac{n+1}{m}}}{(\Gamma_{+})^3}|u^m-\gamma|^2 d\x := g(t).
		\end{equation*}
		So that we arrive at the differential inequality 
		\begin{equation}
			\frac{d}{dt} \varphi(t) + C_1 \varphi(t) \leq g(t),
		\end{equation}
		where $C_1$ depends on $A, n$ and the Poincar\'e constant. Multiplying both sides by $e^{C_1t}$ 
		\begin{align*}
			\varphi(t) &\leq e^{-C_1t}\varphi(0) + e^{-C_1t}\int_0^t e^{C_1\tau}g(\tau)d\tau \\
			%&= e^{-C_1t}\varphi(0)+ e^{-C_1t} \Bigg(\int_0^{t/2} g(\tau) e^{C_1 \tau}d\tau + \int_{t/2}^t g(\tau) e^{C_1\tau} d\tau\Bigg)\\
			&\leq e^{-C_1t} \varphi(0) + e^{-C_1t/2}\int_0^{t/2}g(\tau)d\tau + e^{-C_1t}\int_{t/2}^t g(\tau)e^{C_1 \tau} d\tau\\
			&\leq e^{-C_1t}\varphi(0)+C_{n,\Gamma_{-},\Gamma_{+}}e^{-C_1t/2} + e^{-C_1t}\int_{t/2}^t g(\tau) e^{C_1\tau} d\tau \\
			&\leq e^{-C_1t}\varphi(0) +C_{n,\Gamma_{-},\Gamma_{+}}e^{-C_1t/2} + \int_{t/2}^\infty g(\tau) d\tau
		\end{align*}
		thus 
		\begin{equation*}
			\lim_{t\rightarrow \infty} \varphi(t) = \lim_{t\rightarrow \infty} \int_{\Omega} |\nabla \sigma|^2 d\x =0. 
		\end{equation*}
	\end{proof}
	The next corollary is a consequence of Lemma \ref{asympt-sigma}.
	\begin{corollary}\label{lemma-conv-ugamma}
		Assume that $(u,\gamma)$  is a solution to \eqref{pmi} and that $n >m$. Let $\sigma := \frac{u^n}{\gamma}$. Then, 
		\begin{itemize}
			\item[(i)] $\lim_{t\rightarrow \infty} \int_{\Omega} |\nabla \gamma|^2 d\x =0 $, and
			\item[(ii)] $\lim_{t\rightarrow \infty} \int_{\Omega} |\nabla u|^2 d\x =0 $
		\end{itemize}
	\end{corollary}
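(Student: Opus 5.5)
We plan to derive both limits from Lemma~\ref{asympt-sigma} ($\|\nabla\sigma\|_{L^2}\to 0$), the uniform bounds of Proposition~\ref{max-principle} ($\Gamma_-\le\gamma\le\Gamma_+$, $\Sigma_-\le\sigma\le\Sigma_+$, hence $u$ bounded above and away from zero), and the relaxation equation $\partial_t\gamma=-\gamma+u^m$. Since $u^n=\sigma\gamma$, we have $u^m=(\sigma\gamma)^{m/n}$. Substituting this into the relaxation equation gives a closed ODE for $\gamma$, pointwise in $\x$:
\[
\partial_t\gamma = -\gamma + \sigma^{m/n}\gamma^{m/n}.
\]
Our strategy is to differentiate this ODE in space and exploit the damping it provides in the range $n>m$.

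Write $h(\sigma,\gamma)=-\gamma+\sigma^{m/n}\gamma^{m/n}$. Taking the gradient and testing with $\nabla\gamma$ yields
\[
\frac12\frac{d}{dt}\int_\Omega|\nabla\gamma|^2\,d\x=\int_\Omega \partial_\gamma h\,|\nabla\gamma|^2\,d\x+\int_\Omega\partial_\sigma h\,\nabla\sigma\cdot\nabla\gamma\,d\x .
\]
Here $\partial_\gamma h=-1+\frac mn\sigma^{m/n}\gamma^{m/n-1}=-1+\frac mn\frac{u^m}{\gamma}$. This coefficient is the main obstacle, since it is not automatically negative. On the invariant rectangle we only know $u^m/\gamma=\sigma^{m/n}\gamma^{(m-n)/n}\le \Sigma_+^{m/n}\Gamma_-^{(m-n)/n}$. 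We therefore need $\frac mn\,\Sigma_+^{m/n}\Gamma_-^{(m-n)/n}<1$, uniformly for large times.

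First, we will try to obtain this from convergence to the critical line. By Lemma~\ref{bound-g}, $\int_0^\infty\|u^m-\gamma\|_{L^2}^2\,dt<\infty$, and $\frac{d}{dt}\|u^m-\gamma\|^2_{L^2}$ is bounded thanks to the $L^\infty$ bounds and the $H^{-1}$-type control of $\partial_t u$. From this we expect $u^m/\gamma\to1$, at least in measure, so that $\partial_\gamma h\approx -(1-\frac mn)<0$ on most of $\Omega$. If only convergence in measure is available, we split $\Omega$ into the set where $|u^m/\gamma-1|<\delta$ and its complement. On the complement, the bad contribution is controlled using the measure of that set.

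As an alternative that avoids the obstacle, we can choose the invariant rectangle of Proposition~\ref{max-principle} tightly, since its corners lie on the critical line $\sigma=\gamma^{(n-m)/m}$. We then verify $\frac mn\Sigma_+^{m/n}\Gamma_-^{(m-n)/n}<1$ after a finite time, using the fact that the rectangles shrink toward the equilibrium. Either route gives a constant $\kappa>0$ with $\partial_\gamma h\le-\kappa$ for $t\ge T$.

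Given this, we apply Young's inequality to the cross term, using the boundedness of $\partial_\sigma h$ on $D$. This gives $\frac{d}{dt}\|\nabla\gamma\|^2+\kappa\|\nabla\gamma\|^2\le C\|\nabla\sigma\|^2$. The same Gronwall argument as in Lemma~\ref{asympt-sigma}, with the right-hand side tending to $0$, yields (i).

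For (ii), we use $u=(\sigma\gamma)^{1/n}$, so that
\[
\nabla u=\frac1n(\sigma\gamma)^{\frac1n-1}\bigl(\gamma\nabla\sigma+\sigma\nabla\gamma\bigr).
\]
The prefactors are bounded on $D$, so $\|\nabla u\|_{L^2}\le C\bigl(\|\nabla\sigma\|_{L^2}+\|\nabla\gamma\|_{L^2}\bigr)\to0$ by Lemma~\ref{asympt-sigma} and (i).
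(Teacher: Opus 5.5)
\textbf{There is a genuine gap in part (i).} Your part (ii) is the paper's argument. In (i), however, the unweighted estimate for $\|\nabla\gamma\|_{L^2}^2$ produces the coefficient $\partial_\gamma h=-1+\frac mn\frac{u^m}{\gamma}$, and neither of your fixes makes it uniformly negative.

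On the invariant rectangle you only have $\frac{u^m}{\gamma}\le\Sigma_+^{m/n}\Gamma_-^{(m-n)/n}=(\Gamma_+/\Gamma_-)^{(n-m)/n}$. Negativity therefore requires the smallness condition $\frac mn(\Gamma_+/\Gamma_-)^{(n-m)/n}<1$ on the initial data.

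\emph{Route (b), shrinking rectangles.} This assumes the rectangles shrink toward the equilibrium. Nothing in the paper gives this, and the invariant-region argument cannot. Every point of the critical line $\sigma=\gamma^{(n-m)/m}$ (that is, $u^m=\gamma$) is a rest point of the kinetic system, including the corners $(\Gamma_\pm,\Sigma_\pm)$. So there is no contraction. Any shrinking would have to come from diffusion, and would amount to uniform convergence to equilibrium, which is stronger than the corollary.

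\emph{Route (a), convergence of $u^m/\gamma$.} This is circular. To bound $\frac{d}{dt}\|u^m-\gamma\|_{L^2}^2$ you must pair $\partial_t u=\Delta\sigma$ with $mu^{m-1}(u^m-\gamma)$. After integrating by parts this needs bounds on $\|\nabla u\|_{L^2}$ and $\|\nabla\gamma\|_{L^2}$, which you do not yet have; the direct estimate only gives exponential-in-time bounds. Even granting $u^m/\gamma\to1$ in measure, the splitting step fails. With only $L^2$ control of $\nabla\gamma$ and no higher integrability, $|\nabla\gamma|^2$ may concentrate on the small bad set $E$. Smallness of $|E|$ then does not control $\int_E(\partial_\gamma h)_+|\nabla\gamma|^2$.

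\textbf{The missing idea: change the unknown.} Observe that $\partial_\gamma h=-(1-\frac mn)+\frac mn\,\partial_t\ln\gamma$, so the indefinite part of the coefficient is an exact time derivative. The paper divides the relaxation equation by $\gamma^{m/n}$ and works with $v=\gamma^{1-m/n}$. This satisfies the linear equation
\[
\partial_t v=\Bigl(1-\frac mn\Bigr)\bigl(\sigma^{m/n}-v\bigr),
\]
with constant damping $1-\frac mn>0$, precisely because $n>m$. Take the gradient, test with $\nabla v$, and apply Young's inequality. This gives
\[
\frac{d}{dt}\|\nabla v\|_{L^2}^2+\frac{2m}{n}\Bigl(1-\frac mn\Bigr)\|\nabla v\|_{L^2}^2\le\frac12\|\nabla\sigma^{m/n}\|_{L^2}^2\le C\|\nabla\sigma\|_{L^2}^2 ,
\]
with no smallness assumption on the data. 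The same Gronwall argument used for $\varphi(t)=\int_\Omega|\nabla\sigma|^2\,d\x$ gives $\|\nabla v\|_{L^2}\to0$. Then $|\nabla\gamma|=\frac{n}{n-m}\gamma^{m/n}|\nabla v|\le C|\nabla v|$ on the invariant rectangle, which yields (i).

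Equivalently, your own computation goes through if you estimate the weighted quantity $\int_\Omega\gamma^{-2m/n}|\nabla\gamma|^2\,d\x$. The weight exactly cancels the $\frac mn\,\partial_t\ln\gamma$ term.
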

	
		\begin{proof}
			Consider $\eqref{pmi}_2$ 
			\begin{equation*}
				\partial_t \gamma = -\gamma + u^m = -\gamma + \sigma^{\frac{m}{n}} \gamma^{\frac{m}{n}},
			\end{equation*}
			where  $\sigma = \frac{u^n}{\gamma}$. We multiply both sides by $\gamma^{-\frac{m}{n}}$
%			\begin{equation*}
%				\frac{1}{1-\frac{m}{n}}\partial_t \gamma^{1-\frac{m}{n}}= -\gamma^{-\frac{m}{n}+1} + \sigma^{\frac{m}{n}},  
%			\end{equation*}
			and take the gradient to obtain
			\begin{equation*}
				\frac{1}{1-\frac{m}{n}}\partial_t \nabla \gamma^{1-\frac{m}{n}} = -\nabla \gamma^{1-\frac{m}{n}} + \nabla \sigma ^{\frac{m}{n}},
			\end{equation*}
			A further multiplication by $\nabla \gamma^{1-\frac{m}{n}}$ and integration over $\Omega$ yields
%			\begin{equation*}
%				\frac{d}{dt}\frac{1}{2}\frac{1}{(1-\frac{m}{n})}\int_{\Omega} |\nabla \gamma^{1-\frac{m}{n}}|^2 d\x + \int_{\Omega} |\nabla \gamma^{1-\frac{m}{n}}|^2 d\x = \int_{\Omega} \nabla \gamma^{1-\frac{m}{n}}\nabla \sigma^{\frac{m}{n}}d\x
%			\end{equation*}
			\begin{align*}
				\frac{d}{dt} \int_{\Omega}|\nabla \gamma^{1-\frac{m}{n}}|^2 d\x &+ 2\big(1-\frac{m}{n}\big)\int_{\Omega} |\nabla \gamma^{1-\frac{m}{n}}|^2 d\x = 2\big(1-\frac{m}{n}\big)\int_{\Omega} \nabla \gamma^{1-\frac{m}{n}}\nabla \sigma^{\frac{m}{n}} d\x \\
				&\leq \Bigg(4(1-\frac{m}{n})^2 \int_\Omega |\nabla \gamma^{1-\frac{m}{n}}|^2 d\x\Bigg)^{1/2} \Bigg(\int_\Omega |\nabla \sigma^{\frac{m}{n}}|^2 d\x\Bigg)^{1/2} \\
				&\leq 2(1-\frac{m}{n})^2 \int_{\Omega}|\nabla \gamma^{1-\frac{m}{n}}|^2 d\x + \frac{1}{2}\int_\Omega |\nabla \sigma^{\frac{m}{n}}|^2 d\x,
			\end{align*}
			so that we get 
			\begin{equation}
				\frac{d}{dt}\int_\Omega |\nabla \gamma^{1-\frac{m}{n}}|^2 d\x + \frac{2m}{n}(1-\frac{m}{n}) \int_\Omega |\nabla \gamma^{1-\frac{m}{n}}|^2 d\x \leq \frac{1}{2} \int_\Omega |\nabla \sigma^{\frac{m}{n}}|^2 d\x.
			\end{equation}
			% taking the gradients inside the norm 
			% \begin{equation}
			% 	\frac{d}{dt}\int_\Omega |\nabla \gamma^{1-\frac{m}{n}}|^2 d\x + \frac{2m}{n}(1-\frac{m}{n})\int_\Omega \big|(1-\frac{m}{n})\gamma^{-\frac{m}{n}}\nabla \gamma\big|^2 d\x \leq \frac{1}{2} \int_\Omega |\frac{m}{n}\sigma^{\frac{m}{n}-1}\nabla \sigma|^2 d\x
			% \end{equation}
			% Applying Proposition  \ref{max-principle} we obtain 
			% \begin{equation*}
			% 	\frac{d}{dt}\int_\Omega |\nabla \gamma^{1-\frac{m}{n}}|^2 d\x + \frac{2m}{n}(1-\frac{m}{n})^3 (\Gamma_{-})^{-\frac{2m}{n}}\int_\Omega |\nabla \gamma|^2 d\x \leq \frac{1}{2} \big(\frac{m}{n}\big)^2 (\Gamma_{+})^{-\frac{2(n-m)^2}{nm}}\int_\Omega |\nabla \sigma |^2 d\x 
			% \end{equation*}
			% that is 
			% \begin{equation*}
			% 	\frac{d}{dt}\int_\Omega |\nabla \gamma^{1-\frac{m}{n}}|^2 d\x +\Tilde{C_1}\int_\Omega |\nabla \gamma|^2 d\x \leq \Tilde{C_2}\int_\Omega |\nabla \sigma|^2 d\x 
			% \end{equation*}
			% where the constants $\Tilde{C_1}, \Tilde{C_2}$ depend on $n,m, \Gamma_{-}$ and $\Gamma_{+}$. By Lemma \ref{asympt-sigma}, we have 
			% \begin{equation*}
			% 	\lim_{t\rightarrow\infty} \int_\Omega |\nabla \sigma|^2 d\x =0,
            
			% \end{equation}
            \noindent
            Using Proposition \ref{max-principle} we obtain 
            \begin{equation*}
                \frac{d}{dt}\int_{\Omega}|\nabla \gamma^{1-\frac{m}{n}}|^2dx + C_1\int_{\Omega}|\nabla \gamma^{1-\frac{m}{n}}|^2 dx \leq \frac{1}{2} (\frac{m}{n})^2(\Gamma_{+})^{\frac{-2(n-m)^2}{nm}} \int_{\Omega}|\nabla \sigma|^2 dx := C_2 \varphi(t),
            \end{equation*}
            where $C_1 >0$ and $C_2 > 0$ are constants. Denote $\psi(t):= \int_{\Omega} |\nabla \gamma^{1-\frac{m}{n}}|^2 dx$. Then we get the differential inequality 
            \begin{equation*}
                \frac{d}{dt} \psi(t) + C_1 \psi(t) \leq C_2 \varphi(t),
            \end{equation*}
            following the argument in the proof of Lemma \ref{asympt-sigma} and the fact that $\lim_{t\rightarrow \infty} \varphi(t)=0$, we obtain 
            \begin{equation*}
                \lim_{t\rightarrow \infty}\int_{\Omega}\psi(t) = \lim_{t\rightarrow \infty} \int_{\Omega}|\nabla \gamma^{1-\frac{m}{n}}|^2 dx =0. 
            \end{equation*}
            Moreover, from Proposition \ref{max-principle} we get 
            \begin{equation*}
                \int_{\Omega}|\nabla \gamma|^2 dx \leq \frac{n}{n-m}(\Gamma_{-})^{\frac{m}{n}}\int_{\Omega} |\nabla \gamma^{1-\frac{m}{n}}|^2 dx = C_3 \int_{\Omega}|\nabla \gamma^{1-\frac{m}{n}}|^2 dx,
            \end{equation*}
            where $C_3 >0$ is a positive constant. This gives 
            \begin{equation*}
                \lim_{t\rightarrow \infty} \int_{\Omega}|\nabla \gamma|^2 dx =0,
            \end{equation*}
			which proves (i). Now to prove (ii), we first note that since $\sigma = \frac{u^n}{\gamma}$ we have 
			\begin{equation*}
				\nabla u = \gamma^{\frac{1}{n}}\nabla \sigma^{\frac{1}{n}} + \sigma^{\frac{1}{n}}\nabla \gamma^{\frac{1}{n}},
			\end{equation*}
			taking the $L^2$-norm and using Proposition  \ref{max-principle} we obtain 
			\begin{align*}
				\int_\Omega |\nabla u|^2 d\x &\leq \int_\Omega |\gamma^{\frac{1}{n}}\nabla \sigma^{\frac{1}{n}}|^2 d\x + \int_\Omega |\sigma^{\frac{1}{n}}\nabla \gamma^{\frac{1}{n}}|^2 d\x \\
				&\leq (\Gamma_{+})^{\frac{2}{n}}\int_\Omega |\nabla \sigma^{\frac{1}{n}}|^2 d\x + (\Gamma_{+})^{\frac{2(n-m)}{nm}}\int_\Omega |\nabla \gamma^{\frac{1}{n}}|^2 d\x \\
				&=(\Gamma_{+})^{\frac{2}{n}}\int_\Omega \Big|\frac{1}{n}\sigma^{\frac{1}{n}-1}\nabla \sigma \Big|^2 d\x + (\Gamma_{+})^{\frac{2(n-m)}{nm}}\int_\Omega \Big|\frac{1}{n}\gamma^{\frac{1}{n}-1}\nabla \gamma\Big|^2 d\x \\
				&\leq \Big(\frac{1}{n}\Big)^2 (\Gamma_{+})^{\frac{4(n-m)(1-n)}{mn^2}}\int_\Omega |\nabla \sigma|^2 d\x + \Big(\frac{1}{n}\Big)^2 (\Gamma_{+})^{\frac{4(n-m)(1-n)}{mn^2}}\int_\Omega |\nabla \gamma|^2 d\x \\
				&\leq \hat{C_1}\int_\Omega |\nabla \sigma|^2 d\x +\hat{C_2}\int_\Omega |\nabla \gamma|^2 d\x, 
			\end{align*}
			where the constants $\hat{C_1}, \hat{C_2}$ depend on $m,n, \Gamma_{-}$ and $\Gamma_{+}$. By Lemma \ref{asympt-sigma}, we obtain 
			\begin{equation*}
				\lim_{t\rightarrow\infty}\int_\Omega |\nabla u|^2 d\x = 0.
			\end{equation*}
\end{proof}
%	\end{corollary}
	We now prove Theorem \ref{main-theorem}.
	\begin{proof}
    First, we note that if we normalize the initial data so that $\frac{1}{|\Omega|}\int_\Omega u_0(\textbf{y}) = 1 $, then from the conservation law \eqref{conslaw} we have $\frac{1}{|\Omega|}\int_\Omega u(\textbf{y},t) d\textbf{y} = 1$.
		Applying the Poincar\'e inequality, there is a constant $C >0$ such that  
		\begin{equation}
			\Big\|u(\textbf{x},t)-\fint_\Omega u(\textbf{y},t)d\textbf{y}\Big\|_{L^2(\Omega)} \leq C \|\nabla u\|_{L^2(\Omega)}
		\end{equation}
and so by Corollary \ref{lemma-conv-ugamma} we have 
		\begin{equation*}
			\lim_{t\rightarrow \infty}\|u(\textbf{x},t)-1\|_{L^2(\Omega)} =0.    
		\end{equation*}
        Now, for the convergence of $\gamma$, we multiply $\eqref{pmi}_2$ by $e^t$ to obtain 
        \begin{equation}
            \|\gamma(\x,t) - 1\|_{L^2(\Omega)} \leq e^{-t} \|\gamma_0-1\|_{L^2(\Omega)} + \int_0^t e^{-(t-s)} \|u^m-1\|_{L^2(\Omega)} ds,
        \end{equation}
        where, thanks to Proposition \ref{max-principle}, the right-hand side converges to $0$ as $t\rightarrow \infty$.
	\end{proof}

%\vfil\eject
%  section self-similar localizing profiles
%%%%%%%%%%%%%%%%%%%%%%%%%%%%%%%%%%%%%%%%%%%%%%%%%%%%%%%%%%%%%%%%
%%%%%%%%%%%%%%%%%%%%%%%%%%%%%%%%%%%%%%%%%%%%%%%%%%%%%%%%%%%%

	\section{Self-similar localizing solutions for \texorpdfstring{$n<m$}{n<m}}\label{sec:localization}
	In the following sections we consider the system \eqref{pmi} on $\Omega = \R^d$  in the parameter range $m>n>0$, 
	and construct solutions that exhibit localization. 
	We consider the evolution of radially symmetric solutions of \eqref{pmi}, $u = u(\rho, t)$, $\gamma = \gamma (\rho, t)$ with $\rho = |x|$,
	which satisfy the equations 
	\begin{equation}
		\tag{\tcb{$P_{\rho,t}$}}
		\begin{cases}
			\partial_t u = \partial_{\rho\rho} (\frac{1}{\gamma}u^n) + {d-1 \over \rho} \partial_\rho (\frac{1}{\gamma}u^n),\\
			\partial_t \gamma = -\gamma +u^m.
		\end{cases}
		\label{eq:radial}
	\end{equation}
 Figure~\ref{fig:Localexample} illustrates a numerical simulation of \eqref{eq:radial} for $d=1$, obtained by using a fully implicit finite volume scheme combined with a Newton iteration. The solution $(u,\gamma)$ develops a localized profile, concentrating at a single point. 
 When the initial data is even, the concentration occurs at $\rho = 0$. 
 \begin{figure}[H]
		\centering
		
		\begin{subfigure}{0.35\textwidth}
			\centering
			\includegraphics[width=\textwidth]{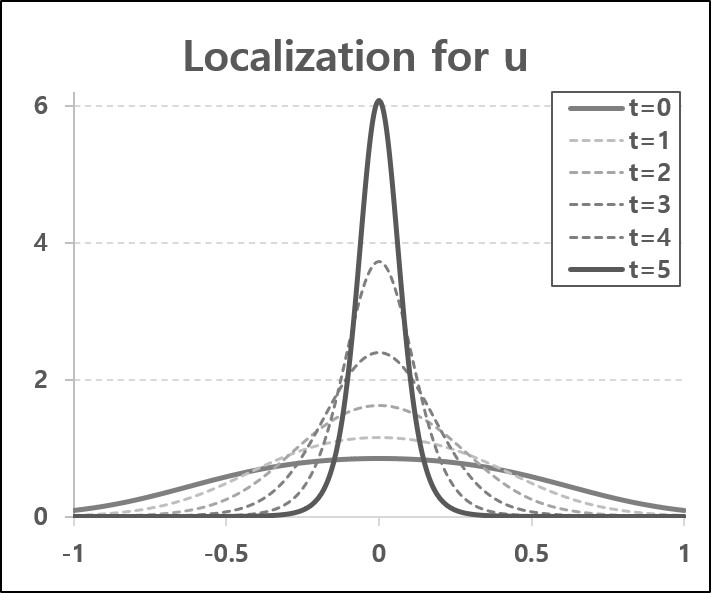}
			\caption{$u(\rho,t)$}
		\end{subfigure}
		\hspace{5mm}
		\begin{subfigure}{0.35\textwidth}
			\centering
			\includegraphics[width=\textwidth]{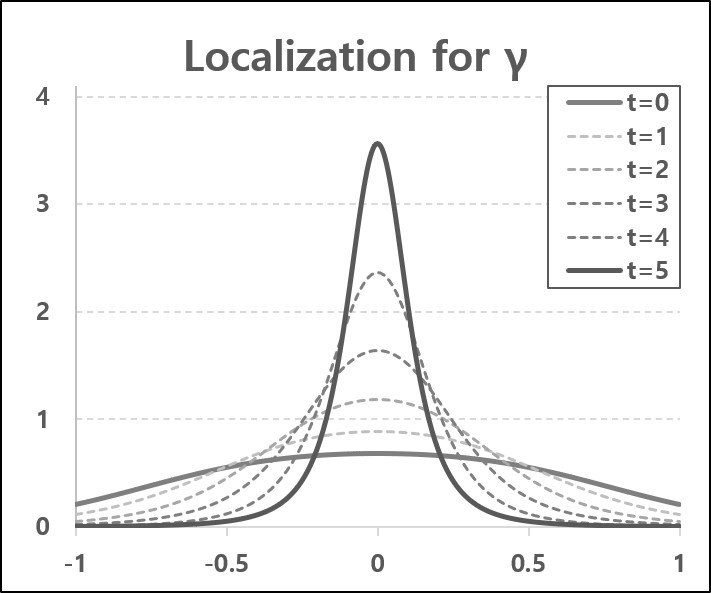}
			\caption{$\gamma(\rho,t)$}
		\end{subfigure}
		
		\caption{Numerical simulation of the equation \eqref{eq:radial} shows localization at $\rho=0$. The parameters used for the simulation are $d=1$, $n=0.5$, $m=0.9$.}
		\label{fig:Localexample}
\end{figure}

 \subsection{Self-similar localizing solutions}
To study this response, we introduce the {\it ansatz} of a self-similar localizing solution of the form	
\begin{equation}\label{eq:firstans}
u(\rho,t)=e^{a\lambda t}\bar U(\xi),
\qquad
\gamma(\rho,t)=e^{b\lambda t}\bar\Gamma(\xi),
\qquad
\xi=e^{\lambda t}\rho,
\end{equation}
with $\lambda>0$. This ansatz is motivated by the the scaling invariance property \eqref{scin}, with
constants $a$, $b$ determined by
\begin{equation}\label{coeff}
		a := a^{m,n} = {2 \over 1+m-n}, \quad b := b^{m,n} = {2m \over 1+m-n} \, .
	\end{equation}
The functions $(\bar U,\bar\Gamma)$ determine the profile of the solution.

\medskip
We define the notion of localizing solution, as follows:
\begin{definition}\label{def:localization}
		A positive smooth solution $(u,\gamma)$ of the diffusion-relaxation system \eqref{eq:radial} is called a localizing solution if it 
		satisfies the ansatz
		\begin{equation}\label{selfsimloc}
			u(\rho,t) = A(t) \ \bar{U}\big (f(t)  \ \rho\big), \quad \gamma(\rho,t) = B(t) \ \bar{\Gamma}\big ( f(t) \ \rho \big)
		\end{equation} 
		where $f(t)$,  $A(t)$, $B(t)$ are increasing functions on $[0,\infty)$, $f(0) = A(0) = B(0) = 1$, 
		and the solution satisfies 
		\begin{enumerate}
			\item when $\rho=0$, 
			\begin{align*}
				&\lim\limits_{t\to \infty} u(0,t) = +\infty, \quad \lim\limits_{t\to \infty} \gamma(0,t) = +\infty,
			\end{align*}
			\item when $\rho\ne 0$,
			\begin{align*}
				&\lim\limits_{t\to \infty} u(\rho,t) = 0, \quad \lim\limits_{t\to \infty} \gamma(\rho,t) = 0.
			\end{align*}
		\end{enumerate}
	\end{definition}
 
 \noindent
    A localizing solution in the sense of  Definiton~\ref{def:localization} is a special solution of the system~\eqref{eq:radial} with initial data 
    $u_0(\rho) = \bar{U}\left(\rho\right), \; \gamma_0(\rho) = \bar{\Gamma}\left(  \rho\right)$.
        As time increases, the self-similar format of the function \eqref{selfsimloc}  tends to concentrate information around the origin $\rho =0$.
In the case of \eqref{eq:firstans}, we have selected $f(t) = e^{\lambda t}$, $A(t) = e^{a\lambda t}$, and $B(t) = e^{b\lambda t}$ with $a, b, \lambda > 0$. 
Note that \eqref{eq:firstans} will fit into the definition of localizing solution provided that the profile equations can be selected to satisfy
$(\bar U,\bar\Gamma) (\xi) \to 0$ as $\xi \to \infty$.
    
\begin{remark}
 The reader should note that condition (2) of Definition \ref{def:localization} might be too restrictive and one could replace it with the relaxed condition
 \begin{eqnarray*}
 &u(\rho, t) \sim {\hat A} (t)  \quad \mbox{as $\rho \to \infty$}&
 \\
 &\gamma (\rho, t) \sim {\hat B}(t) \quad \mbox{as $\rho \to \infty$}&
 \\
 &\mbox{ with  ${\hat A} (t) = o (A(t))$, ${\hat B}(t) = o(B(t))$, as $t \to \infty$}.&
 \end{eqnarray*}
 and still have a localizing response.
 \end{remark}

 \subsection{Derivation of the problem determining the profiles}

Under the {\it ansatz} \eqref{eq:firstans}, the localization problem is reduced to finding the profiles $(\bar U,\bar\Gamma)$. 
Since $\xi=e^{\lambda t}\rho\to\infty$ as $t\to\infty$, if the profile decays as $\xi\to\infty$, then the solution decays away from the origin, while the exponential prefactors produce growth at $\rho=0$. Indeed, localization is encoded in the boundary behavior of the profile,
and the existence of localizing solutions is reduced to a boundary value problem for the self-similar profile.  
The details of these considerations are captured below.

We proceed to devise the problem that will determine the profile $(\bar{U}(\xi), \bar{\Gamma}(\xi))$.  Introducing
the ansatz \eqref{eq:firstans} to the system \eqref{eq:radial}, we derive the system of ordinary differential equations
\begin{align} \label{nonautoODE}
		&\begin{aligned}
			\frac{2}{1+m-n} \lambda \bar{U} + \lambda \xi \bar{U}' &= \big(\frac{1}{\bar{\Gamma}} \ \bar{U}^n \big)'' + {d-1 \over \xi} \big(\frac{1}{\bar{\Gamma}} \ \bar{U}^n \big)', \\ \frac{2m}{1+m-n} \lambda \bar{\Gamma}+\lambda \xi \bar{\Gamma}' &= -\bar{\Gamma} + \bar{U}^m.
		\end{aligned} \tag{$PS$}
\end{align}
The prime notation denotes differentiation with respect to $\xi$, $f' = {d f \over d\xi} $. 

\begin{remark}\label{rem:scalingprop}
It can be easily checked that the system \eqref{nonautoODE} is itself invariant under the scaling
$$
\bar {U_c} (\xi)  = c^a \bar{U}(c\xi) \, , \quad  \bar \Gamma_c (\xi) = c^b \bar{\Gamma}(c\xi),
$$
where the constants $a$, $b$ are again selected by \eqref{coeff}. Furthermore, if there is a self-similar solution of the system \eqref{nonautoODE}
under this scaling, then this solution has the simple form
$$
\bar{U}_{ss} (\xi) = U(1) \xi^{-a}, \quad \bar{\Gamma}_{ss} (\xi) = \Gamma(1) \xi^{-b}.
$$
The reader can check by a direct computation that such self-similar solutions do not exist. On the other hand their format motivates
a transformation that will be successful to de-singularize the problem in the following section.
\end{remark}

The system \eqref{nonautoODE} is not autonomous and it has the property that it is invariant under the change of variable 
$\xi \to (-\xi)$.  Since we want a smooth and positive solution on the real line, we impose the initial conditions
	\begin{equation} \label{eq:BD}
		\bar{U}(0) > 0, \quad \bar{\Gamma}(0)>0, \quad \bar{U}^\prime (0) = 0, \quad \bar{\Gamma}^\prime (0)=0 ,
	\end{equation}
solve the problem in the interval $(0, \infty)$ and reflect the solution so that $\bar U (\xi) = \bar U (-\xi)$, $\bar \Gamma (\xi) = \bar \Gamma (-\xi)$,
for $\xi \in (-\infty, 0)$. The conditions $U^\prime(0) = \Gamma^\prime (0)=0$ are imposed to secure a smooth profile.
The values $\bar{U}(0)$ and $\bar{\Gamma}(0)$ determine the coefficients of the growth rate at $\xi = 0$:
	\begin{equation} \label{def:U0}
		u(0,t) = \bar{U}(0) e^{a\lambda t}, \quad \gamma(0,t) = \bar{\Gamma}(0) e^{b\lambda t}.
	\end{equation}

A consequence of the scaling invariance of the system \eqref{nonautoODE} is that we may assume, without loss of generality,
that the initial datum $\Gamma (0) = 1$.
	By substituting $\xi = 0$ in the second equation of \eqref{nonautoODE} and using $\bar{\Gamma}(0) =1$, we have 
	\begin{equation*}
		 \frac{2m}{1+m-n} \lambda = -1 + \bar{U}^m(0).
	\end{equation*}

{\bf The profile equations}. The problem determines the profile $(\bar U, \bar \Gamma)$ thus consists of solving the system \eqref{nonautoODE}
for $\xi \in (0, \infty)$ subject to the initial condition
\begin{equation}\label{icPS}
		\bar{U}(0) = U_0 \, , \quad \bar{\Gamma}(0) = 1 \, , \quad \bar{U}^\prime (0) = 0, \quad \bar{\Gamma}^\prime (0)=0.
\end{equation}
We will also impose the condition
\begin{equation}\label{decay}
\bar{U} (\xi) \to 0 \, \quad \bar{\Gamma} (\xi) \to 0 \, , \quad \mbox{as $\xi \to \infty$}
\end{equation}
so that $(\bar U, \bar \Gamma)$ is a localizing solution. The resulting problem determining the profiles is a 
boundary value problem for the non-autonomous system \eqref{nonautoODE}.

%\bigskip
%\bigskip
%
%\tcr{\bf I Edited up to here}
%
%The first property is regarding the behavior around $\xi =0$
%
%\tcr{Please develop this argument here}
%
%The second property characterizes the expected decay rate for large $\xi$. It proceeds via a heuristic 
%argument outlined in appendix \ref{sec:heu}.
%
%\tcr{Please write the consequences of the argument here}
%
%
%\tcr{ Also we need to list other properties that are listed in this section.}
%
%
%
%
%\bigskip
%\bigskip

%\color{blue}

Of course, it is not {\it a-priori} clear that this boundary value problem admits a solution. We first identify several necessary properties of the localizing profiles under consideration. These concern their behavior near the origin, their asymptotic decay as $\xi\to\infty$, and the resulting restrictions on the parameters.

\medskip
\noindent
{\bf Behavior near the origin.}
By the normalization $\bar{\Gamma}(0)=1$ and the second equation of \eqref{nonautoODE}, the value $U_0=\bar{U}(0)$ is related to the growth rate $\lambda$ through
\begin{equation}\label{eq:lU0}
	U_0^m=1+b\lambda=1+\frac{2m}{1+m-n}\lambda.
\end{equation}
In particular, $\lambda>0$ implies $U_0>1$. Smoothness and radial symmetry of the profiles require
\[
\bar{U}'(0)=\bar{\Gamma}'(0)=0.
\]
Consequently, the profiles admit the local expansions
\begin{equation}\label{eq:origin-expansion}
	\bar{U}(\xi)=U_0+\frac{1}{2}\bar{U}''(0)\xi^2+o(\xi^2), \qquad \bar{\Gamma}(\xi)=1+\frac{1}{2}\bar{\Gamma}''(0)\xi^2+o(\xi^2) \qquad\text{as }\xi\to0.
\end{equation}
We seek profiles that decrease immediately away from the origin. Since their first derivatives vanish at $\xi=0$, such profiles should be locally concave at the origin; in particular, we require
\[
\bar{U}''(0)<0, \qquad \bar{\Gamma}''(0)<0.
\]
In contrast, an important feature of the transformed quantity
\[
\bar{\Sigma}(\xi):=\frac{\bar{U}^n(\xi)}{\bar{\Gamma}(\xi)}
\]
is that it is locally convex near the origin. Indeed, $\bar{\Sigma}'(0)=0$, and taking the limit $\xi\to0$ in the first equation of \eqref{nonautoODE} gives
\begin{equation}\label{eq:origin-sigma}
	d\,\bar{\Sigma}''(0)=a\lambda U_0.
\end{equation}
Since $a$, $\lambda$, and $U_0$ are positive, it follows that $\bar{\Sigma}''(0)>0$. Moreover,
\[
\bar{\Sigma}''(0)=nU_0^{n-1}\bar{U}''(0)-U_0^n\bar{\Gamma}''(0).
\]
Expanding the second equation of \eqref{nonautoODE} to second order also gives
\begin{equation}\label{eq:origin-second}
	\bigl(1+(b+2)\lambda\bigr)\bar{\Gamma}''(0)=mU_0^{m-1}\bar{U}''(0).
\end{equation}
Thus, once $U_0$, and hence $\lambda$, is fixed, equations \eqref{eq:origin-sigma} and \eqref{eq:origin-second} determine the quadratic behavior of the profiles near the origin. These local expansions will later select the appropriate direction of the orbit in the autonomous formulation. A direct calculation gives
\begin{align*}
	\bar{U}''(0)&=-\frac{a\lambda(U_0^m+2\lambda)}{d\bigl((m-n)U_0^m-2n\lambda\bigr)}U_0^{2-n},\\
	\bar{\Gamma}''(0)&=-\frac{ma\lambda}{d\bigl((m-n)U_0^m-2n\lambda\bigr)}U_0^{m+1-n}.
\end{align*}
Therefore, the required local concavity of $\bar{U}$ and $\bar{\Gamma}$ imposes the condition
\begin{equation}\label{eq:assumpconcave}
	(m-n)U_0^m-2n\lambda>0.
\end{equation}

\medskip
\noindent
{\bf Expected behavior at infinity.}
To obtain localization away from the origin, the profiles must decay as $\xi\to\infty$. We seek profiles with polynomial asymptotic behavior
\begin{equation}\label{eq:BD2}
	\bar{U}(\xi)=C_1\xi^{-\mu_1}+o\left(\xi^{-\mu_1}\right), \qquad \bar{\Gamma}(\xi)=C_2\xi^{-\mu_2}+o\left(\xi^{-\mu_2}\right) \qquad\text{as }\xi\to\infty,
\end{equation}
where $C_1,C_2,\mu_1$, and $\mu_2$ are positive constants. Substituting \eqref{eq:BD2} into the second equation of \eqref{nonautoODE}, we observe that the terms involving $\bar{\Gamma}$ are of order $\xi^{-\mu_2}$, whereas $\bar{U}^m$ is of order $\xi^{-m\mu_1}$. By assuming the relaxation equation is balanced, these terms have the same asymptotic order and it gives
\[
\mu_2=m\mu_1.
\]
The detailed calculation is presented in Appendix~\ref{sec:heu}.
For $d=1$, the balance in the first profile equation then determines
\begin{equation}\label{eq:decay-exponents}
	\mu_1=\frac{1}{m-n}, \qquad \mu_2=\frac{m}{m-n}.
\end{equation}
The leading-order coefficients satisfy
\begin{equation}\label{eq:decay-coefficients}
	\frac{C_1^m}{C_2}=1-\frac{m\lambda(1-m+n)}{(m-n)(1+m-n)}.
\end{equation}
For a fixed $\rho\neq0$, substituting \eqref{eq:BD2} into the self-similar ansatz \eqref{eq:firstans} gives
\begin{align*}
	u(\rho,t)&=C_1\rho^{-\mu_1}e^{(a-\mu_1)\lambda t}+o\left(e^{(a-\mu_1)\lambda t}\right),\\
	\gamma(\rho,t)&=C_2\rho^{-\mu_2}e^{(b-\mu_2)\lambda t}+o\left(e^{(b-\mu_2)\lambda t}\right).
\end{align*}
Therefore, decay away from the origin requires
\begin{equation}\label{eq:localization-exponents}
	\mu_1>a, \qquad \mu_2>b.
\end{equation}
For the exponents in \eqref{eq:decay-exponents}, these two inequalities are equivalent to
\[
m<n+1.
\]
Since $C_1$ and $C_2$ are positive, the right-hand side of \eqref{eq:decay-coefficients} must also be positive. This leads to the restriction
\begin{equation}\label{eq:lambda-restriction}
	0<\lambda<\frac{(m-n)(1+m-n)}{m(1-m+n)}.
\end{equation}
Consequently, the parameter range
\[
0<n<m<n+1, \qquad 0<\lambda<\frac{(m-n)(1+m-n)}{m(1-m+n)}
\]
is naturally associated with the construction of localizing profiles.

Finding a localizing solution of \eqref{eq:radial} is thus reduced to finding a positive smooth solution of the non-autonomous system \eqref{nonautoODE} that satisfies the conditions at the origin \eqref{icPS} and the asymptotic behavior \eqref{eq:BD2}. The following proposition explains how these profile conditions imply localization of the corresponding solution of the original system.

\color{black}

	\begin{proposition} \label{prop:growth}
		Let $(\bar{U}(\xi), \bar{\Gamma}(\xi))$ be a positive smooth solution of the system \eqref{nonautoODE} with a boundary condition \eqref{eq:BD}. Let $\lambda>0$ be chosen to satisfy the polynomial decay \eqref{eq:BD2} with $\mu_1 > a^{m,n}$ and $\mu_2 > b^{m,n}$. Then $(u,\gamma)$ is a localizing solution at $\rho=0$ of the system \eqref{eq:radial}.
	\end{proposition}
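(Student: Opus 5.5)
The plan is to verify directly the requirements of Definition~\ref{def:localization} for the functions defined by the ansatz \eqref{eq:firstans}.

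\textbf{Step 1: the functions solve \eqref{eq:radial}.} By the derivation leading to \eqref{nonautoODE}, the pair $u(\rho,t)=e^{a\lambda t}\bar U(e^{\lambda t}\rho)$, $\gamma(\rho,t)=e^{b\lambda t}\bar\Gamma(e^{\lambda t}\rho)$ solves \eqref{eq:radial}. I would recheck this by the chain rule: $\partial_t u=e^{a\lambda t}(a\lambda\bar U+\lambda\xi\bar U')$, while the diffusion term scales by the factor $e^{(na-b+2)\lambda t}$. This factor equals $e^{a\lambda t}$ exactly because $na-b+2=a$ by \eqref{coeff}. Similarly, all terms of the relaxation equation carry the common factor $e^{b\lambda t}$, since $ma=b$.

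\textbf{Step 2: smoothness, positivity, and the structural conditions.} Positivity and smoothness are inherited from $(\bar U,\bar\Gamma)$. The condition $\bar U'(0)=\bar\Gamma'(0)=0$ in \eqref{eq:BD}, together with the even reflection, makes the profiles smooth functions of $\rho$ across $\rho=0$. We then set $f(t)=e^{\lambda t}$, $A(t)=e^{a\lambda t}$ and $B(t)=e^{b\lambda t}$. These are increasing and equal $1$ at $t=0$, because $\lambda>0$ and $a,b>0$; here we use $m<n+1$, which is encoded in $\mu_1>a>0$, so that $1+m-n>0$.

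\textbf{Step 3: growth at the origin.} At $\rho=0$ we have $u(0,t)=\bar U(0)e^{a\lambda t}\to\infty$ and $\gamma(0,t)=\bar\Gamma(0)e^{b\lambda t}\to\infty$, since $\bar U(0),\bar\Gamma(0)>0$.

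\textbf{Step 4: decay away from the origin.} Fix $\rho\neq0$. Then $\xi=e^{\lambda t}|\rho|\to\infty$, and \eqref{eq:BD2} gives a $\xi_0$ such that $\bar U(\xi)\le 2C_1\xi^{-\mu_1}$ and $\bar\Gamma(\xi)\le 2C_2\xi^{-\mu_2}$ for $\xi\ge\xi_0$. Hence, for large $t$,
\[
0<u(\rho,t)\le 2C_1|\rho|^{-\mu_1}e^{(a-\mu_1)\lambda t},\qquad 0<\gamma(\rho,t)\le 2C_2|\rho|^{-\mu_2}e^{(b-\mu_2)\lambda t}.
\]
Both bounds tend to $0$ because $\mu_1>a$ and $\mu_2>b$.

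This step is the only place where the hypotheses $\mu_1>a^{m,n}$ and $\mu_2>b^{m,n}$ enter. The only subtlety, which is minor, is converting the little-$o$ remainder in \eqref{eq:BD2} into a uniform bound valid for large $\xi$, as done above.
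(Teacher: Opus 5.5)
Your proof is correct and follows the same route as the paper's: blow-up at $\rho=0$ from the exponential prefactors, and decay for fixed $\rho\neq 0$ from substituting \eqref{eq:BD2} into the ansatz and using $\mu_1>a$, $\mu_2>b$; the chain-rule check and the explicit bound replacing the little-$o$ remainder are extra detail the paper omits. One small correction: $a,b>0$ requires $1+m-n>0$, which follows from the standing assumption $m>n>0$ (as the paper says), whereas $m<n+1$ does not give it and neither does $\mu_1>a$.
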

	\begin{proof}
		From the ansatz \eqref{eq:firstans}, $(u,\gamma)$ is a positive smooth solution. When $\rho=0$, we have 
		\begin{equation*}
			u(0,t) = \bar{U}(0) e^{a \lambda t}, \quad \gamma(0,t) = e^{b \lambda t}.
		\end{equation*}
		Since $m >n >0$, the parameters $a,b, \lambda$ are positive. Moreover, $u(0,t) \to +\infty$ and $\gamma(0,t) \to +\infty$ as $t \to \infty$.
		From the polynomial decay \eqref{eq:BD2}, as $t \to \infty$ with $\rho\ne 0$, $(u,\gamma)$ satisfy
		\begin{equation*}
			u(\rho,t) = C_1 e^{(a-\mu_1) \lambda t} \ \rho^{-\mu_1} + o(e^{(a-\mu_1) \lambda t}), \quad \gamma(\rho,t) = C_2 e^{(b-\mu_2) \lambda t} \ \rho^{-\mu_2} + o(e^{(b-\mu_2) \lambda t}).
		\end{equation*}
		Since $a-\mu_1<0$ and $b-\mu_2<0$, we have $u(\rho,t) \to 0$ and $\gamma(\rho,t) \to 0$ as $t \to \infty$.
	\end{proof}

%	\vfil\eject
		
\section{Reduction to a heteroclinic orbit for an autonomous system}\label{sec:reduction}

In this section we reduce the problem to that of determining a heteroclinic orbit for an autonomous system.
The procedure consists of two steps: (i) first, we desingularise the problem, (ii) second, we transform it to an 
appropriate autonomous system in new variables. The latter is studied in the following section and we prove an existence 
theorem for the heteroclinic orbit that is stated at the end of this section.

\subsection{Desingularization of the system, reduction to an autonomous problem}

	To reduce the order of \eqref{nonautoODE}, we define $\bar{\Sigma} := \frac{\bar{U}^n}{\bar{V}}$ and $\bar{W}:= \bar{\Sigma}'$. The system \eqref{nonautoODE} can be written as
	\begin{align} \label{nonautoODE2}
		&\begin{aligned}
			\frac{2}{1+m-n}\lambda \bar{U} + \lambda \xi \bar{U}' &=  \bar{W}'+{d-1 \over \xi} \bar{W}, \\ \frac{2m}{1+m-n} \lambda \bar{\Gamma}+\lambda \xi \bar{\Gamma}' &= -\bar{\Gamma} + \bar{U}^m, \\ \bar{\Sigma}' &= \bar{W}.
		\end{aligned} \tag{$P_{\xi}'$}
	\end{align}

Next, we introduce a change of variables for time augmentation of the system \eqref{nonautoODE2}.
	\begin{equation}\label{eq:secondans}
		\bar{U}(\xi) = \xi^{-a} U( \ln \xi), \ \bar{\Gamma}(\xi) = \xi^{-b} \Gamma( \ln \xi), \ \bar{W}(\xi) = \xi^{-\omega} W( \ln \xi)
	\end{equation}
where 
	\begin{equation*}
		a = a^{m,n} = \frac{2}{1+m-n}, \quad b = b^{m,n} = \frac{2m}{1+m-n}, \quad \omega = a^{m,n}-1 \, .
	\end{equation*}
The transformation is motivated by the scaling properties observed in Remark \ref{rem:scalingprop}.
Since $\Sigma = U^n / \Gamma$, we also have
	\[ \bar{\Sigma}(\xi) = \frac{1}{\bar{\Gamma}} \bar{U}^n = \xi^{b - na} \Sigma( \ln \xi) = \xi^{2\frac{m-n}{1+ m - n}} \Sigma( \ln \xi). \]
Then we define a new independent variable as $$\eta = \ln \xi = \ln |x| + \lambda t $$ with the upper dot notation $\dot{f} = {df \over d\eta}$. Substituting these to the system \eqref{nonautoODE2},
	\begin{equation}\label{autoODE2}
		\begin{aligned}
			\dot{W} - \lambda \dot{U} &= \left(\frac{2}{1+m-n} -d \right) \ W, \\ 
			\lambda \dot{\Gamma} &= -\Gamma + U^m, \\
			\dot{\Sigma} &= -\frac{2(m-n)}{1+m-n} \ \Sigma + W.
		\end{aligned} 
	\end{equation}
		The original function $u$ and $\gamma$ can be recovered as 
	\begin{equation}\label{eq:recovered}
		u(\rho,t) = \rho^{-a} U( \ln \rho + \lambda t ), \ \gamma(\rho,t) = \rho^{-b} \Gamma( \ln \rho + \lambda t ).
	\end{equation}
	
	%\tcr{If $m = n+1$, we have $a^{m,n} = 1$. Then, the first equation of the system \eqref{autoODE2} can be integrated, which gives a planar system on the manifold $W-\lambda U = C$. Analysis for the planar system will be discussed in Appendix~\ref{sec:n+1}. In this section, we assume that $m \ne n+1$.}
	
	By defining $Z = W-\lambda U$, we can rearrange \eqref{autoODE2} to derive the first order autonomous ODE system.
	\begin{equation}\label{autoODE}
		\begin{aligned}
			\dot{Z} &= \left(\frac{2}{1+m-n} -d \right) \left(Z + \lambda U \right), \\ 
			\dot{\Sigma} &= -\frac{2(m-n)}{1+m-n} \Sigma + Z + \lambda U, \\
			\lambda \dot{\Gamma} &= -\Gamma + U^m.
		\end{aligned} \tag{$P_{\eta}$}
	\end{equation}
	Where $U = (\Sigma \Gamma)^{1\over n}$. From the relation $\xi = e^\eta$, the trajectory of \eqref{nonautoODE2} for $\xi\in [0,+\infty)$ corresponds to the trajectory of \eqref{autoODE} for $\eta \in (-\infty,+\infty)$. 
	
%\color{blue}
However, $\Sigma(\eta)$ diverges as $\eta \to -\infty$, since
\[
\lim_{\eta\to -\infty} \Sigma(\eta)
= \lim_{\xi\to 0} \xi^{na-b}\bar{\Sigma}(\xi)
= \bar{U}^n(0)\lim_{\xi\to 0}\xi^{-(m-n)a}
= \infty.
\]
Consequently, the analysis of \eqref{autoODE} would involve an unbounded
trajectory. To avoid the resulting technical difficulties, we introduce a
nonlinear transformation to a $pqr$-system in which the corresponding
trajectory is expected to remain bounded. The choice of such a transformation
is not immediate. A useful starting point is to rewrite \eqref{autoODE} in
terms of logarithmic derivatives:
\begin{equation}\label{autoODElog}
	\begin{aligned}
		\frac{d}{d\eta}\left(\ln Z\right)
		&= \left(\frac{2}{1+m-n}-d\right)
		\left(1+\lambda\frac{U}{Z}\right), \\
		\frac{d}{d\eta}\left(\ln\Sigma\right)
		&= -\frac{2(m-n)}{1+m-n}
		+\frac{Z}{\Sigma}+\lambda\frac{U}{\Sigma}, \\
		\lambda\frac{d}{d\eta}\left(\ln\Gamma\right)
		&= -1+\frac{U^m}{\Gamma}.
	\end{aligned}
	\tag{$\log P_{\eta}$}
\end{equation}
As $\eta\to+\infty$, the quantities appearing on the right-hand side of
\eqref{autoODElog} are expected to approach constants. More precisely, we
expect that
\[
\frac{U}{Z}, \qquad
\frac{Z}{\Sigma}+\lambda\frac{U}{\Sigma},
\qquad\text{and}\qquad
\frac{U^m}{\Gamma}
\]
converge to constants, and the same is therefore expected for suitable
combinations of these quantities. Among the possible choices suggested by
this observation, we select one in the following subsection.

\subsection{Transformation to an equivalent autonomous system}

In this subsection, we introduce an equivalent autonomous system for which
the corresponding trajectory is expected to remain bounded as $\eta$ varies
from $-\infty$ to $+\infty$. The new system is formulated in terms of three
functions $p$, $q$, and $r$ of the variable $\eta$, defined through $Z$,
$\Sigma$, and $\Gamma$. We seek a transformation satisfying the following
two requirements:
\begin{itemize}
	\item In terms of the variable $\xi$, the quantity
	$(p,q,r)(\eta(\xi))$ remains bounded as $\xi\to 0^+$.
	\item In terms of the variable $\eta$, the expected asymptotic behavior
	as $\eta\to+\infty$, suggested by \eqref{autoODElog}, is also bounded.
\end{itemize}

\color{black}

%\tcr{We need to include some motivation here} % comment by HYK : See the above blue colored part.
%By heuristic calculations from the system \eqref{nonautoODE2}, the following three pairs of quantities are expected to share the same asymptotic leading order as $\xi \to 0$. 
%	\begin{equation*}
%		\bar{U} - \xi^{-1} \bar{W} = O(\xi), \quad \bar{\Gamma} - \bar{U}^m = O(\xi), \quad \bar{W} - \xi^{-1} \bar{\Sigma} = O(\xi)
%	\end{equation*}

	This suggests introducing the new variables $(p,q,r)(\eta)$;
	\begin{align*}
		&p:= \frac{\xi^2 \bar{\Gamma}^{1\over m}}{\bar{\Sigma}} = \frac{\Gamma^{1\over m}}{\Sigma}, \\ & q := \frac{\xi \bar{W} - \lambda \xi^2 \bar{U}}{\bar{\Sigma}} = \frac{Z}{\Sigma}, \\ & r := \frac{\bar{U}}{\bar{\Gamma}^{1\over m}} = \frac{U}{\Gamma^{1\over m}}.
	\end{align*}
	Using those new variables, substitution into \eqref{autoODE} gives the following ODE system;
	\begin{equation}\label{eq:pqr}
		\begin{aligned}
			\dot{p} &= -p \left( q+\lambda pr - \frac{1}{m\lambda} r^m -\frac{2(m-n)}{1+m-n} + {1\over m \lambda} \right),  \\ 
			\dot{q} &= q \ (2-d-q-\lambda pr) +  \left(\frac{2}{1+m-n} -d \right) \lambda pr ,\\
			n \ \dot{r} &= r\left(q+\lambda pr + \frac{m-n}{m\lambda} r^m -\frac{2(m-n)}{1+m-n} - {m-n\over m\lambda} \right). 
		\end{aligned} \tag{$P_{pqr}$}
	\end{equation}
	
	The transformation from $(Z, \Sigma, \Gamma)$ to $(p,q,r)$ can be inverted as
	\begin{equation} \label{eq:transform}
		\begin{aligned}
			& U = \left(pr^{1+m} \right)^{1\over 1+m-n}, \quad \Gamma = \left(pr^{n} \right)^{m\over 1+m-n}, \quad \Sigma = \left(p^{n-m}r^{n} \right)^{1 \over 1+m-n}, \\ & W = q \left(p^{n-m}r^{n} \right)^{1 \over 1+m-n} + \lambda \left(pr^{1+m} \right)^{1\over 1+m-n} = \Sigma (q+\lambda pr)
		\end{aligned}
	\end{equation}
	When $n = m+1$, the system becomes $pr^{n} = 1$ and the transform from $Z \Sigma \Gamma$--system to $pqr$--system is not invertible. Since we are in the parameter domain of $m>n$, this transform is always invertible.

 \subsection{Statement of results for dimension $d=1$}.

The problem of construction of the profile has been transformed to determining a heteroclinic orbit for  the system \eqref{eq:pqr}.
We now focus on the latter problem from a perspective of dynamical systems. We follow an approach devised (for shear bands problems) in 	
\cite{LeeTzavaras2017,LeeKatsaounisTzavaras2019}. 
The idea is to view the system \eqref{eq:pqr} as a small perturbation as $n \to 0$ of an associated problem and use the geometric theory
of singular perturbations to establish the existence of a heteroclinic orbit as a geometric object invariant under small perturbations. 
This is done in Sections  \ref{sec:asymptotic} and \ref{sec:gspt}. The calculation of equilibria and general properties are valid for any dimension, but the 
application of the geometric theory is only valid for dimension $d=1$. We state the final result.

	\begin{theorem}[Existence of a localizing solution] \label{thm:exist}
		Assume $d=1$ and $$0<m<1, \quad 0<\lambda < {1+m \over 1-m}. $$ Then, there exists a small $n_0>0$ satisfying 
		\begin{equation} \label{eq:assumpn0}
			n_0 < m, \quad \lambda < {(m-n_0)(1+m-n_0) \over m(1-m+n_0)}, \quad (m-n_0)\left(1+\frac{2m}{1+m-n_0}\lambda \right)-2n_0\lambda>0.
		\end{equation}
		such that for $n \in (0,n_0)$, the system \eqref{eq:radial} admits a localizing solution $(u,\gamma)$ of the form \eqref{eq:firstans} which is a positive smooth radial-symmetric function and satisfies $$ u(0,0) = \left( 1+{2m \over 1+m-n} \lambda \right)^{1\over m}, \quad \gamma(0,0) = 1, \quad \partial_\rho u(0,t) = \partial_\rho \gamma(0,t) = 0. $$ 
	\end{theorem}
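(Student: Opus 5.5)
The plan is to produce the localizing solution as a heteroclinic orbit of \eqref{eq:pqr} with $d=1$, and to obtain that orbit by geometric singular perturbation theory in the small parameter $n$, which multiplies $\dot r$. The argument then transfers back through \eqref{eq:transform}, \eqref{eq:secondans} and \eqref{eq:firstans}, and concludes with Proposition~\ref{prop:growth}. I expect the main obstacle to be the global phase-plane analysis of the reduced flow at $n=0$ (Step~3).

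\emph{Step 1: the endpoints.} Using \eqref{icPS}, as $\xi\to0^+$ we have $\bar\Sigma\to U_0^n$ and $\bar W=O(\xi)$. Hence $p\to0$, $q\to0$ and $r\to U_0$, so the orbit should emanate from $M_0=(0,0,r_0)$ with $r_0^m=1+\frac{2m}{1+m-n}\lambda$. This is exactly the $r$-nullcline of \eqref{eq:pqr} at $p=q=0$. Using \eqref{eq:BD2} with \eqref{eq:decay-exponents}, as $\xi\to\infty$ we get $\bar\Sigma\sim c\,\xi$ and $p\sim \xi^{1-1/(m-n)}\to0$ (because $m-n<1$), so $q\to1$ and $r\to r_1=C_1/C_2^{1/m}$. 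The target is therefore $M_1=(0,1,r_1)$, where $r_1^m$ equals the right side of \eqref{eq:decay-coefficients}. Positivity of $r_1^m$ for small $n$ is exactly the second condition in \eqref{eq:assumpn0}; at $n=0$ it reads $\lambda<(1+m)/(1-m)$.

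Linearizing at the endpoints gives the following picture. At $M_0$ the $p$-eigenvalue is $\frac{r_0^m-1}{m\lambda}+\frac{2(m-n)}{1+m-n}-\frac1{m\lambda}\cdot 0>0$, the $q$-eigenvalue is $1$, and the $r$-eigenvalue is $\frac{(m-n)r_0^m}{n\lambda}\gg1$. So $M_0$ is a source. At $M_1$ the $q$-eigenvalue is $-1$, the $r$-eigenvalue is again $+\frac{(m-n)r_1^m}{n\lambda}$, and I expect to check that the $p$-eigenvalue is negative using $m<1$ and small $n$.

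\emph{Step 2: the slow manifold.} Write $\varepsilon=n$. The critical manifold is
$$
\mathcal M_0=\Big\{\,r^m=1+\tfrac{m\lambda}{m}\big(\tfrac{2m}{1+m}-q-\lambda pr\big)\Big\}
$$
(the $n=0$ version of the $r$-bracket), restricted to a compact region with $r>0$. On it the derivative of the fast field in $r$ is $m\,r^m/\lambda\cdot\frac1m>0$, so $\mathcal M_0$ is normally hyperbolic and repelling. Fenichel theory then gives, for $n$ small, a locally invariant slow manifold $\mathcal M_n$ that is $C^1$-close to $\mathcal M_0$. Since $M_0,M_1$ lie on the critical branch and perturb smoothly in $n$, they lie on $\mathcal M_n$. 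Because the fast direction is repelling at both ends, any orbit from $M_0$ to $M_1$ must lie in $\mathcal M_n$. The problem thus reduces to a planar flow in $(p,q)$ that depends regularly on $n$.

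\emph{Step 3: the reduced planar problem at $n=0$ (main obstacle).} On $\mathcal M_0$ we have $r=r(p,q)$, and the reduced system is
$$
\dot p=-p\Big(q+\lambda pr-\tfrac{r^m-1}{m\lambda}-\tfrac{2m}{1+m}\Big),\qquad \dot q=q(1-q-\lambda pr)+\tfrac{2}{1+m}\lambda pr-\lambda pr .
$$
The line $\{p=0\}$ is invariant. There $M_0$ is an unstable node and $M_1$ should be a stable node. I would build a positively invariant trapping region in $\{p>0\}$ bounded by $p=0$, by $q=0$ and $q=1+\delta$, and by the curve where $r(p,q)$ reaches $0$, checking the sign of the vector field on each side. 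Periodic orbits would be excluded by a Dulac function, first trying $1/(pq)$, and any other equilibrium in the region would be ruled out. Poincaré--Bendixson then forces the orbit leaving $M_0$ into $p>0$ to converge to $M_1$. Since $M_1$ is a hyperbolic sink of the slow flow, this connection is open: it persists for the perturbed slow flow on $\mathcal M_n$ for $n<n_0$. The connection can also be viewed as a transversal intersection of the 3-dimensional unstable manifold of $M_0$ with the 2-dimensional stable manifold of $M_1$ in $\mathbb R^3$.

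\emph{Step 4: back to the profiles.} The third condition in \eqref{eq:assumpn0} is \eqref{eq:assumpconcave}. I would use it to select the departure direction from $M_0$ whose expansion reproduces \eqref{eq:origin-expansion}, namely $p\sim c\,e^{2\eta}$ and $q\sim c'e^{2\eta}$. This yields a smooth even profile with $\bar U(0)=U_0$, $\bar\Gamma(0)=1$ and vanishing derivatives.

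The rates of approach to $M_1$, read off from its eigenvalues, give the decay \eqref{eq:BD2} with $\mu_1=1/(m-n)>a$ and $\mu_2=m/(m-n)>b$. These inequalities hold since $m<1<n+1$. Inverting via \eqref{eq:transform} and \eqref{eq:secondans} produces positive $(\bar U,\bar\Gamma)$ solving \eqref{nonautoODE}. Proposition~\ref{prop:growth} then shows that \eqref{eq:firstans} is a localizing solution with the stated values at $(0,0)$.
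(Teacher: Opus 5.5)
Your route is the paper's: Fenichel's theorem in $n$ on a compact piece of $S^{m,0,\lambda}$ with $r\ge\underline r>0$, the planar flow on the graph $r=h(p,q,n)$, a trapping region plus Poincar\'e--Bendixson, and then Propositions~\ref{prop:asymp} and~\ref{prop:growth}. The one structural difference is that you run the phase-plane argument at $n=0$ and perturb, while the paper works on $G^{m,n,\lambda}$ for small $n>0$ directly.

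The trapping region in your Step~3, however, is not compact. On $S^{m,0,\lambda}$ the relation $q+\lambda pr+\frac1\lambda r^m=\frac{2m}{1+m}+\frac1\lambda$ shows that, for finite $p$, $r$ reaches $0$ only on the horizontal line $q=\frac{2m}{1+m}+\frac1\lambda$. This line lies strictly above $q=1$ precisely because $\lambda<\frac{1+m}{1-m}$. So for small $\delta$ your fourth side never meets the strip $0\le q\le 1+\delta$, and the region is unbounded in $p$ (with $r\to0$ as $p\to\infty$). Poincar\'e--Bendixson, and the later restriction to a compact set where Fenichel applies, both need a bound on $p$ that you do not supply.

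The bound is easy to get. On $S^{m,0,\lambda}$ the $p$-equation simplifies to $\dot p=\frac{1+m}{m\lambda}\,p\,(r^m-1)$, and $r\ge1$ forces $\lambda p\le\frac{2m}{1+m}$. Hence the side $p=P$ with $P>\frac{2m}{(1+m)\lambda}$ is crossed inward, and $[0,P]\times[0,1+\delta]$ is a compact trapping region on which $r$ is bounded below. The paper instead closes its triangle with the level line $r=\underline r$, i.e.\ $q+\lambda\underline r\,p=\frac{2m}{1+m}-\frac1\lambda(\underline r^m-1)$. Its inward check on that side is where $\lambda<\frac{1+m}{1-m}$ and $n<m$ enter.

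Second, there is no single ``orbit leaving $M_0$ into $p>0$''. In the planar flow $M_0$ is a node with eigenvalues $1<2$, and all orbits into $p>0$ but one leave tangent to the weak eigenvector, the $q$-axis. For those, $q\sim c\,\xi$ with $c\neq0$, i.e.\ $\bar\Sigma'(0)=\bar W(0)\neq0$, and the reflected profile has a kink. The smooth profile is the single orbit of the planar flow leaving tangent to $\vec X_{01}$, a non-generic orbit of the source. Your remark about a transversal intersection with the open unstable set of $M_0$ is therefore beside the point.

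Hyperbolicity of the sink $M_1$ alone does not make \emph{this} connection persist for $n>0$. You would also need continuous dependence on $n$ of the strong unstable manifold of $M_0^{m,n,\lambda}$. The simpler fix, which is the paper's argument, is to note that a compact trapping region survives for small $n>0$: its boundary inequalities are strict, and on $p=0$, $q=0$ they hold exactly for every $n$. Poincar\'e--Bendixson on $G^{m,n,\lambda}$ then sends every orbit entering it, in particular the $\vec X_{01}$-orbit, to $M_1^{m,n,\lambda}$. No Dulac function is needed: all equilibria lie on the invariant line $p=0$, so no closed orbit fits in $\{p>0\}$.
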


%\vfil\eject
%  section nonlinear stable
%%%%%%%%%%%%%%%%%%%%%%%%%%%%%%%%%%%%%%%%%%%%%%%%%%%%%%%%%%%%%%%%

	\section{Asymptotic analysis of the heteroclinic orbit for small \texorpdfstring{$n$}{n}} \label{sec:asymptotic}
	The system \eqref{eq:pqr} allows four critical points $M_i^{m,n,\lambda}$ for $i=0,1,2,3$. 
	\begin{align*}
		M_0^{m,n,\lambda} &:= \left( 0, \ 0, \ \left( 1+{2m \over 1+m-n} \lambda \right)^{1\over m} \right), \\
		M_1^{m,n,\lambda} &:= \left( 0, \ 2-d, \ \left( 1+\frac{m }{m-n} \left( d-{2 \over (1+m-n)}  \right) \lambda  \right)^{1\over m} \right),\\
		M_2^{m,n,\lambda} &:= (0, \ 0, \ 0 ),\\
		M_3^{m,n,\lambda} &:= (0, \ 2-d, \ 0).
	\end{align*}
	Note that when $d=2$, we have $M_0^{m,n,\lambda} = M_1^{m,n,\lambda}$ and $M_2^{m,n,\lambda} = M_3^{m,n,\lambda}$, so there exist only two distinct critical points.
	
	In this section, we will investigate the dynamical structure of the targeted orbit as the following steps;
	\begin{enumerate}[(i)]
		\item In the subsection~\ref{subsec:char}, we will specify a heteroclinic orbit on the $pqr$-system with boundary behavior \eqref{eq:BD} and \eqref{eq:BD2}.
		
		\item In the subsection~\ref{subsec:eigen}, we will investigate the local eigenstructure of all four critical points and their stability type on the parameter domain.
		
		\item In the subsection~\ref{subsec:asymp}, we will describe the asymptotics of the targeted heteroclinic orbit as $\eta \to \pm \infty$. 
	\end{enumerate}
	
	Before starting the analysis, we denote some frequently used quantities as alphabetical symbols. Recall that from \eqref{scin}, we have
	\begin{equation*}
		a^{m,n} = {2 \over 1+m-n}, \quad b^{m,n} = {2m \over 1+m-n},
	\end{equation*}
	and we define
	\begin{align*}
		A^{m,n,\lambda} &= \left( 1+{2m \over 1+m-n} \lambda \right)^{1\over m}, \\ B^{m,n,\lambda} &= \left( 1+\frac{m }{m-n} \left( d-{2 \over (1+m-n)}  \right) \lambda  \right)^{1\over m}.
	\end{align*}
	In the following calculation, we omit the superscript for readability. Using the above notation, the first two equilibrium points are expressed as 
	\begin{equation*}
		M_0^{m,n,\lambda} = \left( 0,0,A^{m,n,\lambda} \right), \quad M_1^{m,n,\lambda} = \left( 0,2-d,B^{m,n,\lambda} \right).
	\end{equation*} 
	
	\subsection{Characterization of a heteroclinic orbit}~\label{subsec:char}
	
	In the $pqr$-system \eqref{eq:pqr}, there might exist infinitely many heteroclinic orbits among the critical points. By interpreting the boundary conditions \eqref{eq:BD} and \eqref{eq:BD2} under the $pqr$-system \eqref{eq:pqr}, we will single out the starting and end points of the targeted heteroclinic orbit. 
	
	In Section~\ref{sec:gspt}, we will approximate the system to the singular limit $n\to0$. As a result, the dynamical equation for $r$ is reduced to the union of two-dimensional manifolds. One is the manifold $\left\{ r=0, \ p \ge 0 \right\}$ and the other is
	\begin{equation}\label{def:cm}
		S^{m,n,\lambda} := \left\{ (p,q,r) \mid q+\lambda pr + \frac{m-n}{m\lambda} r^m -\frac{2(m-n)}{1+m-n} - {m-n\over m\lambda} = 0, \ p \ge 0, \ r \ge 0 \right\}.
	\end{equation}
	Note that $p\ge 0$ and $r\ge 0$ came from the definitions of $p$ and $r$, respectively. A direct calculation using Proposition~\ref{prop:initial} shows that the starting point $M_0^{m,n,\lambda}$ is placed on the second manifold $S^{m,n,\lambda}$. Therefore, the targeted heteroclinic orbit is placed on the manifold $S^{m,n,\lambda}$, which connects $M_0^{m,n,\lambda}$ to $M_1^{m,n,\lambda}$. To make the targeted orbit well-defined, it is required to have $d=1$ and
	\begin{equation}
		1-{m(1-m+n) \over (m-n)(1+m-n)} \lambda > 0,
	\end{equation} which follow from the calculation of Appendix~\ref{sec:heu}. From now on, we consider the $1$-dimensional space domain only for the construction of the targeted heteroclinic orbit.
	
	\begin{proposition}\label{prop:initial}
		Assume $d=1$. Let $(\bar{U},\bar{\Gamma})(\xi)$ be a positive smooth solution of the ODE system \eqref{nonautoODE} with boundary condition \eqref{eq:BD}. Let $(p, q, r)(\eta)$ be the obtained orbit after the transformation process. Then, 
		\begin{equation}\label{eq:asymp1}
			e^{-2\eta} \left( \begin{bmatrix}
				p(\eta) \\ q(\eta) \\ r(\eta)
			\end{bmatrix} - M_0^{m,n,\lambda} \right) \quad \to \quad \kappa \vec{X}_{01} \qquad \mbox{ as } \eta \to -\infty
		\end{equation} for the constant $\kappa = A^{-n} $ and $\vec{X}_{01} =  \left(1, (a-1)A \lambda , -\frac{a A^{2 } \lambda^2 }{(m-n)A^m -2n\lambda} \right)$.
	\end{proposition}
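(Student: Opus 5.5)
The plan is to compute the Taylor expansions of $p,q,r$ directly from the definitions in terms of $\xi=e^\eta$, using the expansions \eqref{eq:origin-expansion} of $\bar U,\bar\Gamma$ at the origin. We have $\xi\to0^+$ as $\eta\to-\infty$, and $e^{-2\eta}=\xi^{-2}$. So the claim \eqref{eq:asymp1} is equivalent to
\[
(p,q,r)(\eta(\xi))=M_0+\kappa\vec X_{01}\,\xi^2+o(\xi^2)\quad\text{as }\xi\to0^+,
\]
and it suffices to expand each of
\[
p=\frac{\xi^2\bar\Gamma^{1/m}}{\bar\Sigma},\qquad q=\frac{\xi\bar W-\lambda\xi^2\bar U}{\bar\Sigma},\qquad r=\frac{\bar U}{\bar\Gamma^{1/m}}
\]
to order $\xi^2$. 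Smoothness and evenness of the profile (from \eqref{eq:BD} and the reflection symmetry) give $\bar U=U_0+\tfrac12\bar U''(0)\xi^2+o(\xi^2)$, and likewise for $\bar\Gamma$ and $\bar\Sigma$. Hence $\bar W=\bar\Sigma'=\bar\Sigma''(0)\xi+o(\xi)$. With $d=1$, \eqref{eq:origin-sigma} gives $\bar\Sigma''(0)=a\lambda U_0$.

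\textbf{Step 1 ($p$ and $q$).} Using $\bar\Gamma(0)=1$ and $\bar\Sigma(0)=U_0^n$, we get $p=U_0^{-n}\xi^2+o(\xi^2)$. Since $U_0=A$ by \eqref{eq:lU0}, this gives first component $\kappa\cdot1$ with $\kappa=A^{-n}$. For $q$, the numerator is $\xi^2(a\lambda U_0-\lambda U_0)+o(\xi^2)$ and the denominator is $U_0^n+O(\xi^2)$. So $q=\xi^2A^{-n}(a-1)\lambda A+o(\xi^2)$, which matches the second component $\kappa(a-1)A\lambda$.

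\textbf{Step 2 ($r$).} Here $r(0)=U_0=A$, which matches $M_0$. The $\xi^2$ coefficient is
\[
\tfrac12\Bigl(\bar U''(0)-\tfrac{U_0}{m}\bar\Gamma''(0)\Bigr).
\]
Substitute the explicit formulas for $\bar U''(0)$ and $\bar\Gamma''(0)$ derived above with $d=1$. Then use $U_0^m=1+b\lambda$ and $b=ma$ to simplify $U_0^m+2\lambda-a\lambda$ and the related combinations. The aim is to reduce this coefficient to
\[
-\frac{A^{-n}aA^2\lambda^2}{(m-n)A^m-2n\lambda}.
\]
This algebraic simplification is the main obstacle, and there is a risk that the factor $\tfrac12$ or the identities must be tracked carefully.

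\textbf{Step 3 (verification).} I would double-check the derivation of $\bar U''(0)$ and $\bar\Gamma''(0)$ by expanding \eqref{nonautoODE} to second order at $\xi=0$. For the $\Gamma$ equation, the $\xi^2$ coefficient gives $(b\lambda+2\lambda+1)\bar\Gamma''(0)=mU_0^{m-1}\bar U''(0)$, which is \eqref{eq:origin-second}. Combining this with $\bar\Sigma''(0)=nU_0^{n-1}\bar U''(0)-U_0^n\bar\Gamma''(0)=a\lambda U_0$ gives a $2\times2$ linear system that I would solve by hand. Condition \eqref{eq:assumpconcave} guarantees that the denominator $(m-n)A^m-2n\lambda$ is nonzero. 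Finally, dividing by $\xi^2=e^{2\eta}$ and letting $\eta\to-\infty$ yields \eqref{eq:asymp1}.
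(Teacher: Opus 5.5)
Your proposal is correct and follows the paper's proof essentially line by line: you expand $\bar U,\bar\Gamma,\bar\Sigma,\bar W$ at $\xi=0$, solve the $2\times 2$ system coming from $\bar\Sigma''(0)=a\lambda A$ and \eqref{eq:origin-second}, and read off $p,q,r$ to order $\xi^2$. The step you flag as the main obstacle is in fact short, and $b=ma$ is not needed. Since $1+b\lambda=A^m$, \eqref{eq:origin-second} gives $\bar U''(0)-\frac{A}{m}\bar\Gamma''(0)=\frac{2\lambda}{A^m+2\lambda}\bar U''(0)$, so the $\xi^2$-coefficient of $r$ is $\frac{\lambda}{A^m+2\lambda}\bar U''(0)$, which is exactly the paper's expression. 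Also, $(m-n)A^m-2n\lambda\neq 0$ is automatic for a smooth solution, because otherwise the system would force $a\lambda A=0$.
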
 
	The vector $\vec{X}_{01}$ will be realized as one of the eigenvectors of $M_0^{m,n,\lambda}$ in the following subsection.
	\begin{proof}
		By taking $\xi \to 0$ to the system \eqref{nonautoODE2} with $d=1$, we have 
		\begin{equation*} 
			\begin{aligned}
				a \lambda \bar{U}(0) &=  \bar{W}'(0), \\  \bar{U}(0) &= A , \\ \bar{\Sigma}'(0) &= \bar{W}(0)
			\end{aligned}
		\end{equation*}
		By definition of $\bar{\Sigma}(\xi)$, we have 
		\begin{equation*}
			\bar{\Sigma}(0) = \frac{\bar{U}^n(0)}{\bar{\Gamma}(0)} = A^n, \quad \bar{\Sigma}'(0) = 0 .
		\end{equation*}
		
		We start from the Taylor series expansion of solutions at $\xi = 0$. From the boundary condition \eqref{eq:BD} and the above observation at $\xi \to 0$, 
		\begin{align*}
			&\bar{U} (\xi) = A + {1\over 2}\xi^2 \bar{U}'' (0) + o(\xi^2) \\
			&\bar{\Gamma} (\xi) = 1 + {1\over 2}\xi^2 \bar{\Gamma}'' (0) + o(\xi^2) \\
			&\bar{\Sigma} (\xi) = A^n + {1\over 2}\xi^2 \bar{\Sigma}'' (0) + o(\xi^2) \\
			&\bar{W} (\xi) = a A \lambda \xi  + {1\over 2}\xi^2 \ \bar{W}''(0)  + o(\xi^2)
		\end{align*}
		By putting those series expansion to the system \eqref{nonautoODE2}, we have the following;
		\begin{align*}
			&\frac{\bar{U}''(0)}{\bar{U}(0)} = -\frac{a\lambda(A^m +2\lambda)}{ (m-n)A^m - 2n\lambda }  \ A^{1-n} \\
			&\frac{\bar{\Gamma}''(0)}{\bar{\Gamma}(0)} = \frac{mA^m}{A^m + 2\lambda} \ \frac{\bar{U}''(0)}{\bar{U}(0)} = -\frac{ma\lambda}{ (m-n)A^m - 2n\lambda }  \ A^{1+m-n} \\ 
			&\bar{\Sigma}''(0) = \bar{W}'(0) = a A \lambda \\
			&\bar{W}''(0) = 0
		\end{align*}
		Now, we calculate the Taylor expansion of $p(\ln \xi)$ at $\xi = 0$.
		\begin{eqnarray*}
			p(\ln \xi) &=& \frac{\xi^2 \bar{\Gamma}^{1\over m} }{\bar{\Sigma}} 
			= \frac{\xi^2\left( 1 + {1\over 2}\xi^2 \bar{\Gamma}'' (0) + o(\xi^2) \right)^{1\over m} }{A^n + {1\over 2}\xi^2 \bar{\Sigma}'' (0)  + o(\xi^2)} \\ &=& \frac{ \xi^2 +o(\xi^2) }{A^n + o(1)} = A^{-n} \xi^2  + o(\xi^2).
		\end{eqnarray*}
		For the Taylor expansion of $q(\ln \xi)$ at $\xi = 0$, 
		\begin{eqnarray*}
			q(\ln \xi) &=& \frac{\xi \bar{W} - \lambda\xi^2 \bar{U}}{\bar{\Sigma}} = \frac{aA\lambda \xi^2  - A \lambda \xi^2 + o(\xi^2)}{A^n + {1\over 2}\xi^2 \bar{\Sigma}''(0) + o(\xi^2)} \\ 
			&=& (a-1) A^{1-n} \lambda \xi^2 + o(\xi^2).
		\end{eqnarray*}
		Also for the Taylor expansion of $r(\ln \xi)$ at $\xi = 0$, 
		\begin{eqnarray*}
			r(\ln \xi) &=& \frac{\bar{U}}{\bar{\Gamma}^{1\over m}} = \frac{A + {1\over 2}\xi^2 \bar{U}'' (0) + o(\xi^2)}{\left(1 + {1\over 2}\xi^2 \bar{\Gamma}'' (0) + o(\xi^2)\right)^{1\over m}} \\ &=& A \left( 1+ {1\over 2}\xi^2 {\bar{U}'' (0) \over \bar{U}(0)} + o(\xi^2) \right)  \left(1 - {1\over 2}\xi^2 {\bar{\Gamma}'' (0) \over m\bar{\Gamma}(0)} + o(\xi^2)\right) \\
			&=& A \left( 1+ {1\over 2} \xi^2 \left( {\bar{U}'' (0) \over \bar{U}(0)} -{\bar{\Gamma}'' (0) \over m\bar{\Gamma}(0)} \right) \right) + o(\xi^2) \\
			&=& A \left( 1+ \xi^2 \frac{\lambda}{A^m+2\lambda} \ {\bar{U}'' (0) \over \bar{U}(0)} \right) + o(\xi^2).
		\end{eqnarray*}
		And we have
		\begin{equation*}
			r(\ln \xi) - A = - \xi^2 \frac{a\lambda^2}{ (m-n)A^m - 2n\lambda }  \ A^{2-n} + o(\xi^2).
		\end{equation*}
		By combining all terms together and applying $\eta = \ln \xi$,
		\begin{equation*}
			e^{-2\eta} \left( \begin{bmatrix}
				p(\eta) \\ q(\eta) \\ r(\eta)
			\end{bmatrix} - \begin{bmatrix}
				0 \\ 0 \\ A
			\end{bmatrix} \right) = \kappa \vec{X}_{01} + o(1),
		\end{equation*}
		where $\kappa = A^{-n}$ and $\vec{X}_{01} =  \left(1, (a-1)A \lambda , -\frac{a A^{2 } \lambda^2 }{(m-n)A^m -2n\lambda} \right)$.
	\end{proof}

	In the limit of $\eta \to +\infty$, we have $\lim\limits_{\eta \to +\infty}r(\eta) = B^{m,n,\lambda} > 0$, which gives 
	\begin{equation*}
		0 < B^{m,n,\lambda} = \lim\limits_{\xi \to +\infty} r(\ln \xi) = \lim\limits_{\xi \to +\infty} {\bar{U} \over \bar{\Gamma}^{1\over m} } = \lim\limits_{\xi \to +\infty} \left[ {C_1 \over C_2^{1\over m}} \xi^{{\mu_2 \over m} - \mu_1} + o(\xi^{{\mu_2 \over m} - \mu_1})  \right].
	\end{equation*}
	Necessarily, this requires $\mu_2 = m \mu_1$. At this point, the heuristic calculation for the system \eqref{autoODE} as $\xi \to \infty$ is needed to validate $\mu_2 = m \mu_1$ under the boundary behavior \eqref{eq:BD2}. By substituting \eqref{eq:BD2} to \eqref{autoODE}, we obtain 
	\begin{equation}
		\mu_1 = {1\over m-n}, \quad \mu_2 = {m\over m-n},
	\end{equation}
	under the assumption of 
	\begin{equation*}
		m < n+1, \quad \lambda < {(m-n)(1+m-n) \over m(1-m+n)}.
	\end{equation*}
	The detailed calculation is presented in Appendix~\ref{sec:heu}. \\
	
	\subsection{Linear stability analysis of critical points}~\label{subsec:eigen}
	In this subsection, we will present the linear stability analysis for all four critical points.
	\begin{enumerate}[(i)]
		\item $M_0^{m,n,\lambda} = \left( 0, \ 0, \ A^{m,n,\lambda} \right)$ is an unstable node. The eigenvalues are 
		\begin{equation*}
			\lambda_{01} = 2 >0, \quad \lambda_{02} = 1>0, \quad \lambda_{03} = {(m-n) A^m \over n\lambda} >0,
		\end{equation*}
		with the corresponding eigenvectors
		\begin{align*}
			&\vec{X}_{01} = \left(1,(a-1) A \lambda,-\frac{a A^2 \lambda^2 }{(m-n)A^m -2n\lambda} \right), \\ &\vec{X}_{02} = \left(0,\frac{(m-n)A^m}{\lambda} -n,-A\right), \\ &\vec{X}_{03} = (0,0,1) .
		\end{align*}
		
		\item $M_1^{m,n,\lambda} = \left( 0, \ 1, \ B^{m,n,\lambda} \right)$ is a saddle. The eigenvalues are 
		\begin{equation*}
			\lambda_{11} =-{1-m+n\over m-n} <0 , \quad \lambda_{12} = -1<0, \quad \lambda_{13} = {m-n \over n\lambda} B^m >0,
		\end{equation*}
		with the corresponding eigenvectors
		\begin{align*}
			&\vec{X}_{11} = \left(2-{1\over m-n}, (a-2)B \lambda ,\frac{(a-1)B^2\lambda^2}{(m-n)^2 + \lambda n (1-m+n)} \right), \\ &\vec{X}_{12} = \left(0,\frac{(m-n)B^m }{\lambda}  +n,-B \right), \\ &\vec{X}_{13} = (0,0,1) .
		\end{align*}
		
		\item For $M_2^{m,n,\lambda} = (0,\ 0,\ 0)$, 
		\begin{equation*}
			\lambda_{21} = 2-a-{1\over m\lambda}<0 , \quad \lambda_{22} = 1>0, \quad \lambda_{23} = {a-2 \over n} - {m-n \over nm\lambda}<0 ,
		\end{equation*}
		with the corresponding eigenvectors
		\begin{align*}
			&\vec{X}_{21} = (1,0,0), \\ &\vec{X}_{22} = (0,1,0), \\ &\vec{X}_{23} = (0,0,1) .
		\end{align*}
		
		\item For $M_3^{m,n,\lambda} = (0,\ 1,\ 0)$, 
		\begin{equation*}
			\lambda_{31} = 1-a-{1\over m\lambda}<0 , \quad \lambda_{32} = -1<0, \quad \lambda_{33} = {a-1 \over n} - {m-n \over nm\lambda} <0 ,
		\end{equation*}
		with the corresponding eigenvectors
		\begin{align*}
			&\vec{X}_{31} = (1,0,0), \\ &\vec{X}_{32} = (0,1,0), \\ &\vec{X}_{33} = (0,0,1) .
		\end{align*}
	\end{enumerate}
	
	Proposition~\ref{prop:initial} selects, among the infinitely many heteroclinic orbits connecting $M_0^{m,n,\lambda}$ and $M_1^{m,n,\lambda}$, a particular orbit that yields a smooth localizing solution by fixing its emanating direction to be $\vec{X}_{01}$. From the eigenspace analysis at $M_0^{m,n,\lambda}$, the emanating direction $\vec{X}_{01}$ corresponds not to the principal eigenvector, but to the eigenvector associated with the second largest eigenvalue. This implies that the heteroclinic orbit admitting a smooth localizing solution must lie in the subspace of the unstable manifold of $M_0^{m,n,\lambda}$ spanned by $\vec{X}_{01}$ and $\vec{X}_{03}$. 
	
	Although $M_0^{m,n,\lambda}$ is an unstable node and $M_1^{m,n,\lambda}$ is a saddle, the target orbit is ultimately located at the intersection of the following unstable and stable manifolds.
	\begin{align*}
		W^u(M_0^{m,n,\lambda}) &:= \operatorname{span}(\vec{X}_{01},\vec{X}_{03}), \\ 
		W^s(M_1^{m,n,\lambda}) &:= \operatorname{span}(\vec{X}_{11},\vec{X}_{12}),
	\end{align*}
	\begin{equation*}
		(p,q,r)(\eta)  \in W^u(M_0^{m,n,\lambda}) \cap W^s(M_1^{m,n,\lambda}).
	\end{equation*}
	\begin{figure}[H]
		\centering
		
		\begin{subfigure}{0.40\textwidth}
			\centering
			\includegraphics[width=\textwidth]{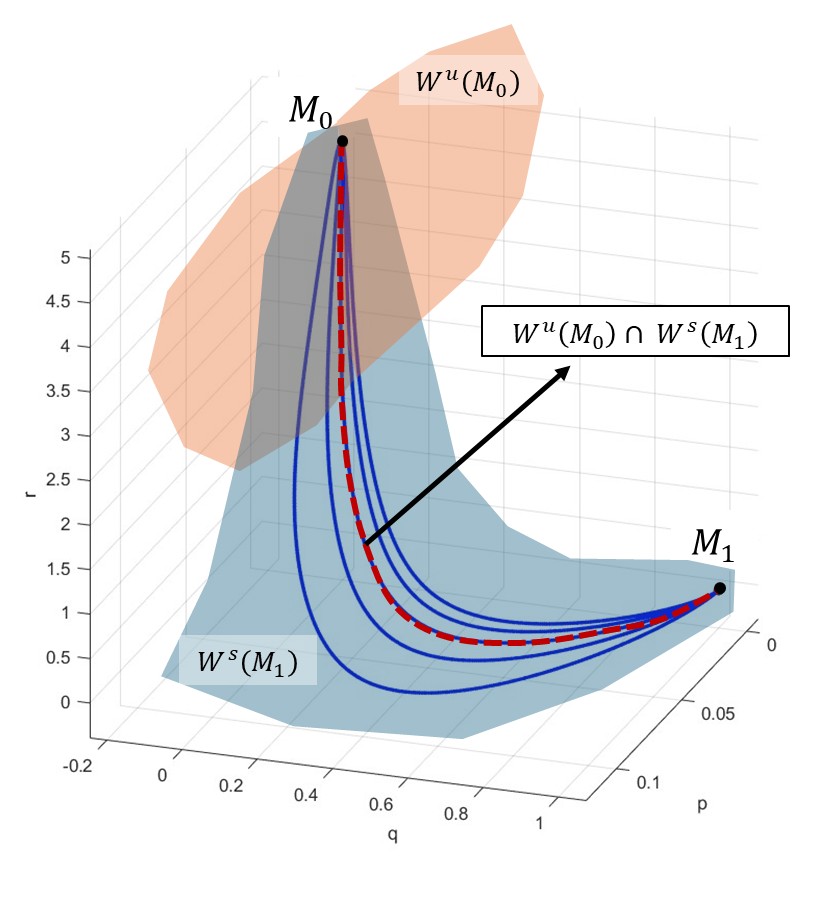}
			\caption{Orbits joining $M_0^{m,n,\lambda}$ and $M_1^{m,n,\lambda}$}
		\end{subfigure}
		\hspace{5mm}
		\begin{subfigure}{0.40\textwidth}
			\centering
			\includegraphics[width=\textwidth]{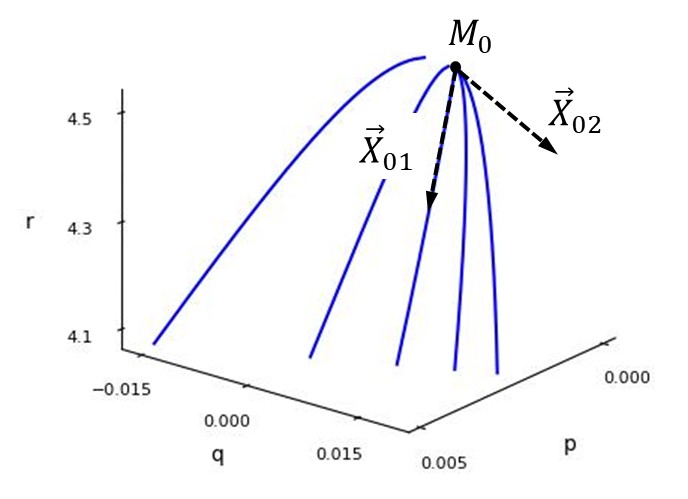}
			\caption{Zoom in near the critical point $M_0^{m,n,\lambda}$}
		\end{subfigure}
		
		\caption{This figure illustrates how the heteroclinic orbit connecting $M_0^{m,n,\lambda}$ and $M_1^{m,n,\lambda}$ is characterized as the intersection of the unstable manifold $W^u(M_0^{m,n,\lambda})$ and the stable manifold $W^s(M_1^{m,n,\lambda})$.}
		\label{fig:manifold}
	\end{figure}
	
	\subsection{Asymptotics of the heteroclinic orbit}~\label{subsec:asymp}
	
	Based on the limit behaviors of $\eta \to \pm \infty$, our goal in this section is to clarify the asymptotics of the heteroclinic orbit joining $M_0^{m,n,\lambda}$ to $M_1^{m,n,\lambda}$. Based on the calculation of Appendix~\ref{sec:heu}, we assume 
	$$ m<n+1, \qquad \lambda < \frac{(m-n)(1+m-n)}{m(1-m+n)}. $$
	If there is a heteroclinic orbit $\varphi(\eta)$ of \eqref{eq:pqr}, it must satisfy
	\begin{equation} \label{unstableM}
		\varphi(\eta) - M_0^{m,n,\lambda} = \kappa e^{2\eta} \vec{X}_{01} + o(e^{2\eta}),
	\end{equation}
	as $\eta \to -\infty$ from Proposition~\ref{prop:initial}. For the limit of $\eta \to +\infty$, we have two subcases. If $m-n \ne {1\over 2}$,
	\begin{equation} \label{stableM1}
		\varphi(\eta) - M_1^{m,n,\lambda} = \kappa_1 e^{-{1-m+n\over m-n} \eta} \vec{X}_{11} + \kappa_2 e^{-\eta} \vec{X}_{12} + \mbox{higher order terms}
	\end{equation} as $\eta \to +\infty$. 
	If $m-n = {1\over 2}$, then we have
	\begin{equation} \label{stableM2}
		\varphi(\eta) - M_1^{m,n,\lambda} = \kappa_1 \eta e^{-\eta} \vec{X}_{11} + \kappa_2 e^{-\eta} \vec{X}_{12} + o(e^{-\eta})
	\end{equation} as $\eta \to +\infty$. 
	Note that for $M_1^{m,n,\lambda}$, only two eigenvalues, $\lambda_{11}$ and $\lambda_{12}$, span the stable eigenspace with the corresponding eigenvectors $\vec{X}_{11}$ and $\vec{X}_{12}$. 
	
	Using \eqref{eq:secondans}, we have the following asymptotics for the $\xi$-dependent functions.
	\begin{proposition}[Asymptotic behavior of $(\bar{U}, \bar{\Gamma}, \bar{\Sigma}, \bar{W})$]\label{prop:asymp}
		Let $(\bar{U}, \bar{\Gamma})$ be a smooth solution of \eqref{nonautoODE} with boundary condition \eqref{eq:BD} and define $\bar{\Sigma}$ and $\bar{W}$ from $\bar{U}$ and $\bar{\Gamma}$. Assume the limit behavior \eqref{eq:BD2} and the parameter domain $0 < n < m < n+1$, $\lambda < \frac{(m-n)(1+m-n)}{m(1-m+n)}$. Let $(p(\eta),q(\eta),r(\eta))$ be the associated variables obtained by the transformation process and $\varphi(\eta)$ be the heteroclinic orbit connecting $M_0^{m,n,\lambda}$ to $M_1^{m,n,\lambda}$. Then, we have
		\begin{enumerate}[(i)]
			\item As $\xi \to 0$, 
			\begin{align*}
				&\bar{U} (\xi) = A -\frac{a\lambda(A^m +2\lambda)}{ 2(m-n)A^m - 4n\lambda }  \ A^{2-n} \xi^2 + o(\xi^2) \\
				&\bar{\Gamma} (\xi) = 1  -\frac{ma\lambda}{2 (m-n)A^m - 4n\lambda }  \ A^{1+m-n} \xi^2  + o(\xi^2) \\
				&\bar{\Sigma} (\xi) = A^n + {1\over 2} a A \lambda \xi^2 + o(\xi^2) \\
				&\bar{W} (\xi) = a A \lambda \xi  + o(\xi^2)
			\end{align*}
			where $a = {2 \over 1+m-n}$ and $A = \left( 1+{2m\over 1+m-n} \lambda \right)^{1\over m}$.
			
			\item As $\xi \to \infty$, if $m-n \ne {1\over 2}$,
			\begin{equation*}
				\bar{U} =  O(\xi^{-{1\over m-n}}), \quad \bar{\Gamma} = O(\xi^{-{m\over m-n}}),
			\end{equation*}
			\begin{equation*}
				\bar{\Sigma} = O(\xi) , \quad \bar{W} = O(1).
			\end{equation*}
			If $m-n = {1\over 2}$, 
			\begin{equation*}
				\bar{U} =  O((\ln \xi)^{2\over 3} \xi^{-{1\over m-n}}), \quad \bar{\Gamma} = O((\ln \xi)^{2m\over 3} \xi^{-{m\over m-n}}),
			\end{equation*}
			\begin{equation*}
				\bar{\Sigma} = O((\ln \xi)^{-{1\over 3}} \xi) , \quad \bar{W} = O((\ln \xi)^{-{1\over 3}}).
			\end{equation*}
		\end{enumerate}
	\end{proposition}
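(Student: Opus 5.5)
Part (i) is essentially a restatement of what was computed in the proof of Proposition~\ref{prop:initial}, and I would obtain it the same way. Insert the Taylor expansions of $\bar U,\bar\Gamma,\bar\Sigma,\bar W$ at $\xi=0$ into \eqref{nonautoODE2} with $d=1$, using $\bar U'(0)=\bar\Gamma'(0)=0$, $\bar\Gamma(0)=1$ and $\bar U(0)=A$, the latter coming from \eqref{eq:lU0}. At order zero this gives $\bar W'(0)=aA\lambda$ and hence $\bar\Sigma''(0)=aA\lambda$. At second order, the second equation gives \eqref{eq:origin-second}. Combining it with $\bar\Sigma''(0)=nA^{n-1}\bar U''(0)-A^n\bar\Gamma''(0)$ yields the stated values of $\bar U''(0)$ and $\bar\Gamma''(0)$. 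Finally, $\bar W''(0)=0$ follows from the parity of the reflected solution, since $\bar W=\bar\Sigma'$ is odd. This step is routine.

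For part (ii) I would work on the $pqr$ side and then transport the result back with the explicit inverse map \eqref{eq:transform} together with \eqref{eq:secondans} and $\eta=\ln\xi$. Along the orbit $\varphi(\eta)\to M_1^{m,n,\lambda}=(0,1,B)$, so $q\to1$ and $r\to B>0$. Hence every power of $r$ is $O(1)$ and bounded away from zero, and $q+\lambda pr\to1$. Everything is therefore governed by the decay of $p$. Write $\nu=\frac{1-m+n}{m-n}=-\lambda_{11}$. In the case $m-n\neq\frac12$, the expansion \eqref{stableM1} gives $p\sim\kappa_1(2-\frac1{m-n})e^{-\nu\eta}$, because the $p$-component of $\vec X_{12}$ vanishes. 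The remaining steps are exponent bookkeeping:
\begin{itemize}
\item $U\sim p^{1/(1+m-n)}$, so $\bar U=\xi^{-a}U=O(\xi^{-a-\nu/(1+m-n)})$, and $a+\frac{\nu}{1+m-n}=\frac1{m-n}$.
\item $\Gamma\sim p^{m/(1+m-n)}$ gives $\bar\Gamma=O(\xi^{-m/(m-n)})$.
\item $\Sigma\sim p^{(n-m)/(1+m-n)}$ gives $\bar\Sigma=\xi^{2(m-n)/(1+m-n)}\Sigma=O(\xi)$.
\item $W=\Sigma(q+\lambda pr)\sim\Sigma$, and $\bar W=\xi^{1-a}W=O(1)$, since the exponents cancel.
\end{itemize}

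In the resonant case $m-n=\frac12$ we have $\nu=1=-\lambda_{12}$, and the $p$-component of $\vec X_{11}$ degenerates. There I would use \eqref{stableM2} to read off $p\sim c\,\eta e^{-\eta}$. Substituting $\eta=\ln\xi$ and $1+m-n=\frac32$ into the same formulas then produces the logarithmic factors:
\begin{itemize}
\item $(\ln\xi)^{2/3}$ from $p^{2/3}$, for $\bar U$, and $(\ln\xi)^{2m/3}$ for $\bar\Gamma$;
\item $(\ln\xi)^{-1/3}$ from $p^{-1/3}$, for $\bar\Sigma$ and $\bar W$.
\end{itemize}

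The main obstacle is the rigorous control of $p$, rather than just its formal rate. One needs an upper bound, to get $O(\cdot)$, and a lower bound, since $\Sigma$ involves a negative power of $p$. I would exploit that the $p$-equation in \eqref{eq:pqr} is linear in $p$, namely $\dot p=-p\,h(\eta)$, where $h\to\nu$ along the orbit, so $p(\eta)=p(\eta_0)\exp(-\int_{\eta_0}^\eta h)$ is positive. Since $\varphi$ converges exponentially to the hyperbolic point, $h-\nu$ is integrable in the nonresonant case, which gives $p\asymp e^{-\nu\eta}$ with nonzero constant and so both bounds. In the resonant case I would instead rely on the resonant expansion \eqref{stableM2} as given, checking that $\kappa_1\neq0$. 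That nondegeneracy, i.e.\ that the orbit does not approach $M_1$ purely along $\vec X_{12}$, is the point I expect to need the most care.
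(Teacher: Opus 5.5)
Part (i) and the non-resonant case of (ii) follow the paper's proof. You Taylor-expand at the origin as in Proposition~\ref{prop:initial}, read $p$ off \eqref{stableM1}, and push it through \eqref{eq:transform} and \eqref{eq:secondans}. Your integrating-factor remark strengthens the paper here. Since $p=\Gamma^{1/m}/\Sigma>0$ and $\dot p=-p\,h$ holds exactly, you get $p=Ce^{-\nu\eta}(1+o(1))$ with $C>0$; the paper only assumes this (``we consider the case $C\neq0$'').

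The resonant case $m-n=\frac12$ is where your plan breaks. You note that the $p$-component $2-\frac{1}{m-n}$ of $\vec X_{11}$ vanishes there. But then \eqref{stableM2} produces no $\eta e^{-\eta}$ term in $p$ at all, whatever $\kappa_1$ is, so checking $\kappa_1\neq0$ is beside the point.

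What actually happens is the following. The Jacobian at $M_1^{m,n,\lambda}$ is lower triangular, with $p$-row $(-\nu,0,0)$ and $\partial\dot q/\partial p=\lambda B(a-2)\neq0$. So at $\nu=1$ the eigenvalue $-1$ is double but has a one-dimensional eigenspace, spanned by $\vec X_{12}$, which has zero $p$-component. The $\eta e^{-\eta}$ term therefore lives only in the $q$ and $r$ components. The variable $p$ enters through the generalized eigenvector, with a pure $e^{-\eta}$.

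More simply, your own argument applies verbatim. $\varphi-M_1^{m,n,\lambda}$ still decays exponentially (like $\eta e^{-\eta}$), so $h-\nu$ is integrable and $p=Ce^{-\eta}(1+o(1))$ with $C>0$. Consequently there are no logarithmic corrections: $\bar U\asymp\xi^{-2}$, $\bar\Gamma\asymp\xi^{-2m}$ and $\bar\Sigma\asymp\xi$. Also $\bar W=\xi^{-1}\bar\Sigma\,(q+\lambda pr)$ tends to a positive constant. The stated bounds for $\bar U$ and $\bar\Gamma$ survive only because they are weaker than the true rates. The bounds $\bar\Sigma=O((\ln\xi)^{-1/3}\xi)$ and $\bar W=O((\ln\xi)^{-1/3})$ are false.

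You can confirm this inside the proposition's own hypotheses. The limit $r\to B>0$ forces $\mu_2=m\mu_1$ in \eqref{eq:BD2}, hence $\bar\Sigma=\bar U^n/\bar\Gamma\sim (C_1^n/C_2)\,\xi^{\mu_1/2}$. The claimed resonant bounds would then require $\mu_1\ge 2$ (from $\bar U$) and $\mu_1<2$ (from $\bar\Sigma$) simultaneously. So the step ``read off $p\sim c\,\eta e^{-\eta}$'' cannot be justified; the paper's proof rests on the same degenerate expansion \eqref{stableM2}. The correct conclusion at $m-n=\frac12$ is the non-resonant one.
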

	\begin{proof}
		\begin{enumerate}[(i)]
			\item This was shown in the proof of Proposition~\ref{prop:initial}. 
			
			\item We start from the case of $m-n \ne {1\over 2}$. From the eigenvector expansion \eqref{stableM1}, we have 
			\begin{align*}
				p(\eta) &= C e^{-{1-m+n \over m-n} \eta} + o(e^{-{1-m+n \over m-n} \eta}), \\ q(\eta) &= 1 + o(1), \\ 
				r(\eta) &= B^{m,n,\lambda} + o(1).
			\end{align*}
			Note that only $\vec{X}_{11}$ has a non-zero component for the variable $p$, so we consider the case of $C\ne 0$. By putting these to \eqref{eq:transform}, we have
			\begin{align*}
				U&= (pr^{1+m})^{1\over 1+m-n} = O( e^{-{1-m+n \over (m-n) (1+m-n)} \eta}), \\ 
				\Gamma&= (pr^{n})^{m\over 1+m-n} = O( e^{-{m(1-m+n) \over (m-n) (1+m-n)} \eta}), \\ \Sigma&= (p^{n-m}r^{n})^{1\over 1+m-n} = O( e^{{1-m+n \over 1+m-n} \eta}), \\ W&= \Sigma(q+\lambda pr) = O( e^{{1-m+n \over 1+m-n} \eta}).
			\end{align*}
			Now, using $\xi = e^{\eta}$ and \eqref{eq:secondans}, we have
			\begin{equation*}
				\bar{U} =  O(\xi^{-{1\over m-n}}), \quad \bar{\Gamma} = O(\xi^{-{m\over m-n}}),
			\end{equation*}
			\begin{equation*}
				\bar{\Sigma} = O(\xi) , \quad \bar{W} = O(1).
			\end{equation*}
			For the case $m-n = {1\over 2}$, from \eqref{stableM2}, we have
			\begin{align*}
				p(\eta) &= C \eta e^{- \eta} + o(e^{- \eta}), \\ q(\eta) &= 1 + o(1), \\ 
				r(\eta) &= B^{m,n,\lambda} + o(1).
			\end{align*}
			By putting these to \eqref{eq:transform}, we have
			\begin{align*}
				U&= (pr^{1+m})^{2\over 3} = O( \eta^{2\over 3} e^{-{2 \over  3} \eta}), \\ 
				\Gamma&= (pr^{n})^{2m\over 3} = O( \eta^{2m\over 3} e^{-{2m \over 3} \eta}), \\ \Sigma&= (p^{-{1\over 2}}r^{n})^{2\over 3} = O( \eta^{-{1\over 3}}e^{{1 \over 3} \eta}), \\ W&= \Sigma(q+\lambda pr) = O( \eta^{-{1\over 3}} e^{{1 \over 3} \eta}).
			\end{align*}
			Again, using $\xi = e^{\eta}$ and \eqref{eq:secondans}, we have
			\begin{equation*}
				\bar{U} =  O((\ln \xi)^{2\over 3} \xi^{-{1\over m-n}}), \quad \bar{\Gamma} = O((\ln \xi)^{2m\over 3} \xi^{-{m\over m-n}}),
			\end{equation*}
			\begin{equation*}
				\bar{\Sigma} = O((\ln \xi)^{-{1\over 3}} \xi) , \quad \bar{W} = O((\ln \xi)^{-{1\over 3}}).
			\end{equation*}
		\end{enumerate}
	\end{proof}

%\vfil\eject
%  section fast slow system
%%%%%%%%%%%%%%%%%%%%%%%%%%%%%%%%%%%%%%%%%%%%%%%%%%%%%%%%%%%%%%%%
	
	\section{Approximation to the fast-slow system as \texorpdfstring{$n\to 0$}{n 0} } \label{sec:gspt}
	Since both critical points $M_0^{m,n,\lambda}$ and $M_1^{m,n,\lambda}$ are on the same critical manifold $S^{m,n,\lambda}$ of the function $r$, we can find the heteroclinic orbit by approximating the system \eqref{eq:pqr} to a planar system \eqref{eq:pq0} as $n \to 0$. In this section, we will apply the geometric singular perturbation theory to obtain the heteroclinic orbit joining $M_0^{m,n,\lambda}$ to $M_1^{m,n,\lambda}$ when $d=1$. The goal of this section is to prove the following theorem.
	
	\begin{theorem}\label{thm:main}
		Let $\Lambda$ be a domain of the tuple $(m,n,\lambda) \in \R^3$ defined by
		\begin{align*}
			& n \ge 0, \\ 
			& 0< m < n+1, \\
			& m > n, \\
			& 0<\lambda < \frac{(m-n)(1+m-n)}{m(1-m+n)},\\
			& (m-n)\left(1+\frac{2m}{1+m-n}\lambda \right)-2n\lambda>0.
		\end{align*}
		For each $(m,0,\lambda) \in \Lambda$, there is an $n_0(m,\lambda)$, such that for all $n \in (0,n_0)$, $(\lambda,m,n) \in \Lambda$, the system \eqref{eq:pqr} with $d=1$ admits a heteroclinic orbit joining the equilibrium $M_0^{m,n,\lambda}$ to $M_1^{m,n,\lambda}$ with the following property \begin{equation} \label{eq:asympthm}
			e^{-2\eta} \left( \begin{bmatrix}
				p(\eta) \\ q(\eta) \\ r(\eta)
			\end{bmatrix} - M_0^{m,n,\lambda} \right) \quad \to \quad \kappa \vec{X}_{01} \qquad \mbox{ as } \eta \to -\infty
		\end{equation} for the constant $\kappa$ in \eqref{eq:asymp1}.
	\end{theorem}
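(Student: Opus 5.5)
We plan to treat \eqref{eq:pqr} with $d=1$ as a fast--slow system in which $n$ is the small parameter. Rescaling time by $\tau=\eta/n$ turns the $r$-equation into $r' = r\,G(p,q,r;m,n,\lambda)$, where
\[
G := q+\lambda pr+\tfrac{m-n}{m\lambda}r^m-\tfrac{2(m-n)}{1+m-n}-\tfrac{m-n}{m\lambda}.
\]
In the same time scale, $(p,q)$ evolve with speed $O(n)$. At $n=0$ the critical set is $\{r=0\}\cup S^{m,0,\lambda}$, where $S$ is given in \eqref{def:cm}.

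\textbf{Step 1: normal hyperbolicity and the slow manifold.} On $S^{m,0,\lambda}$, the transversal linearization in the fast direction is $r\,\partial_r G = \lambda pr+(m-n)r^m/\lambda$. This is strictly positive on $\{p\ge0,\ r>0\}$, which matches the eigenvalues $\lambda_{03},\lambda_{13}\sim (m-n)A^m/(n\lambda)$ computed in \S\ref{subsec:eigen}. Hence every compact piece of $S^{m,0,\lambda}$ that contains $M_0,M_1$ and stays away from $r=0$ is normally hyperbolic and repelling. Since $\partial_r G>0$, we can solve $G=0$ as a graph $r=h_0(p,q)$ over a compact region $K$ of the $(p,q)$-plane. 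Fenichel's first theorem then gives a locally invariant slow manifold $S_n=\{r=h_n(p,q)\}$ that is $O(n)$-close to $S^{m,0,\lambda}$ in $C^1$ and depends smoothly on $(m,n,\lambda)$. The slow flow on $S_n$ is a planar system $(\dot p,\dot q)=F_n(p,q)$, where $F_0$ is obtained by substituting $r=h_0$ into the first two equations of \eqref{eq:pqr}; this is the planar system \eqref{eq:pq0}. The equilibria $M_0^{m,n,\lambda}$ and $M_1^{m,n,\lambda}$ lie in $\{G=0\}$ and are hyperbolic. Because $S_n$ is the unique repelling invariant manifold near $S$ that carries orbits staying close for all backward and forward time, we expect both equilibria to lie on $S_n$.

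\textbf{Step 2: the reduced planar problem at $n=0$.} At $n=0$ the line $\{p=0\}$ is invariant, and there $\dot q=q(1-q)$. On $S^{m,0,\lambda}$, $M_0$ is an unstable node with eigenvalues $2$ and $1$, and $M_1$ is a saddle with eigenvalues $-(1-m)/m$ and $-1$ after restricting to $S$. Both statements follow from the eigen-analysis in \S\ref{subsec:eigen} by dropping the fast eigenvector $\vec X_{03}$, respectively $\vec X_{13}$. The orbit we need leaves $M_0$ tangent to the projection of $\vec X_{01}$, which is the non-principal (strong unstable) direction since $2>1$, and enters $M_1$ along its stable directions.

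This is the step I expect to be the main obstacle. Generic orbits of an unstable node leave tangent to the weak eigenvector, so we must show that the unique strong-unstable orbit $W^{ss}(M_0)$ of the reduced planar system converges to $M_1$. I would attempt this as follows. First, observe that the segment $\{p=0,\ 0<q<1\}$ is itself a heteroclinic orbit from $M_0$ to $M_1$ along the weak direction $\vec X_{02}$. Then construct a trapping region in $\{p>0\}$ bounded by this segment, by a nullcline or explicit curve (for instance, a level set of $q+\lambda p h_0$ or of $\dot p=0$), and by a ray near $M_0$ tangent to $\vec X_{01}$. The goal is to show that $W^{ss}(M_0)$ enters this region, cannot escape to $p=\infty$ or to $r\to0$ (the latter is excluded using the sign restrictions in $\Lambda$, specifically $\lambda<(m-n)(1+m-n)/(m(1-m+n))$ and $(m-n)A^m-2n\lambda>0$), and, by Poincar\'e--Bendixson together with the absence of other equilibria in the region, must converge to $M_1$. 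I would first test the region numerically against the simulations to choose the bounding curve.

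\textbf{Step 3: persistence for small $n>0$.} At $n=0$, the strong unstable manifold of $M_0$ and the two-dimensional stable manifold of $M_1$ within $S$ intersect transversally in the plane, since a one-dimensional and a two-dimensional manifold meet in dimension $2$. For the unstable node $M_0$, the strong unstable manifold associated with eigenvalue $2$ and the weak one associated with eigenvalue $1$ are spectrally separated, so $W^{ss}_n(M_0)$ depends continuously on $n$. The stable manifold of the hyperbolic saddle $M_1$ within $S_n$ is an open set of the plane near the orbit. It follows that for $n\in(0,n_0)$ the perturbed strong-unstable orbit on $S_n$ still converges to $M_1^{m,n,\lambda}$.

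Viewed in the full three-dimensional system, $S_n$ is invariant and the span of $\vec X_{01},\vec X_{03}$ gives exactly the manifold $W^u(M_0)$ described earlier. The orbit constructed on $S_n$ therefore lies in $W^u(M_0)\cap W^s(M_1)$, and it leaves $M_0$ with rate $e^{2\eta}$ along $\vec X_{01}$. The tangency of $S_n$ at $M_0$ follows from the eigenvector $\vec X_{01}$ being tangent to $\{G=0\}$ up to $O(n)$. The constant $\kappa$ is fixed by translating $\eta$, which yields \eqref{eq:asympthm}.
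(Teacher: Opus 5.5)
Your outline has the same architecture as the paper's proof: a fast--slow splitting in $n$, a Fenichel graph $r=h(p,q,n)$ over a compact smooth domain, the planar reduced flow, and Poincar\'e--Bendixson with $M_0,M_1$ as the only equilibria. The gap is the step you flag as the main obstacle, which you leave open, and that step is where the proof actually lives. Without an explicit forward-invariant compact set you cannot guarantee that the orbit leaving $M_0$ stays in the compact piece of the slow manifold where the graph $h$ exists and normal hyperbolicity holds. It could drift to large $p$ or toward $r=0$, and then neither the Fenichel reduction nor Poincar\'e--Bendixson applies along the whole orbit. Choosing the region by numerics does not close this.

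You have also partly misdiagnosed the difficulty. The two eigenvalues you list for $M_1$ on the slow manifold, $-\frac{1-m+n}{m-n}$ and $-1$, are both negative, so $M_1$ is a sink of the planar flow, not a saddle. Consequently nothing special needs to be proved about the strong-unstable orbit. On $\{G=0\}$ one has $\dot p=-p\,\frac{1+m-n}{m\lambda}(1-h^m)$, so an equilibrium with $p>0$ needs $h=1$. Then $\dot q=\frac{1-m+n}{1+m-n}\cdot\frac{2(m-n)}{1+m-n}>0$, so there are no equilibria and no periodic orbits in $\{p>0\}$. Take any compact positively invariant region with $M_0$ (a source) at a corner and $M_1$ on the invariant edge $p=0$. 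Every orbit in it other than $M_0$ converges to $M_1$. This includes the orbit leaving along $(1,(a-1)A\lambda)$, the projection of $\vec X_{01}$, which points into the open quadrant because $a>1$. A boundary ray tangent to $\vec X_{01}$ is unnecessary, and awkward, since the target orbit is tangent to it.

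The missing ingredient is the paper's triangle $D=\{p\ge0,\ q\ge0,\ q+\lambda p\underline r\le K\}$ with $K=\frac{2m}{1+m}-\frac1\lambda(\underline r^m-1)$.

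\textbf{Why it keeps the orbit in the chart.} The slanted edge is exactly the level set $\{h_0=\underline r\}$ of the $n=0$ graph, so your candidate ``a level set of $q+\lambda p h_0$'' was the right one. Hence $h_0\ge\underline r$ on $D$, which keeps the region away from $r=0$ and inside the Fenichel chart.

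\textbf{Inflow, edge by edge.} The edge $p=0$ is invariant. On $q=0$, $\dot q=\frac{1-m+n}{1+m-n}\lambda p h>0$ since $m<n+1$. On the slanted edge at $n=0$, use $h_0=\underline r$ and $\underline r p=(K-q)/\lambda$. The inward flux $-\lambda\underline r\dot p-\dot q$ is then affine in $q\in[0,K]$. It equals $K\bigl(\frac{1-\underline r^m}{\lambda}-\frac{1-m}{1+m}\bigr)$ at $q=K$ and $K\bigl(\frac{(1+m)(1-\underline r^m)}{m\lambda}-\frac{1-m}{1+m}\bigr)$ at $q=0$.

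\textbf{Where the hypotheses enter.} Both endpoint values are positive for small $\underline r$ because $\lambda<\frac{1+m}{1-m}$, the $n=0$ form of the fourth condition in $\Lambda$. The same condition gives $K>1$, so $M_1=(0,1)$ lies in $D$. The condition $(m-n)A^m-2n\lambda>0$ plays no role here.

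\textbf{Order of choices.} Fix $\underline r$ from the $n=0$ computation, then $\tilde D\supset D$, then $n_0$. The strict inflow persists for $n<n_0$ because $h(\cdot,\cdot,n)$ is $O(n)$-close to $h_0$. So the argument runs directly on $G^{m,n,\lambda}$, and your Step~3 continuation of $W^{ss}_n(M_0)$ from $n=0$ is not needed, though it would work once an $n=0$ trapping region is in hand.

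Locating $M_0,M_1$ on the perturbed slow manifold also needs no uniqueness heuristic. The line $p=0$ is invariant with $\dot q=q(1-q)$, so $(0,0)$ and $(0,1)$ are equilibria of the reduced flow for every $h$. Local invariance then forces $\dot r=0$ there.
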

	
	\subsection{When \texorpdfstring{$n=0$}{n=0}} 
	 % If we apply $n=0$ to the critical manifold \eqref{def:cm}, we have
     The critical manifold \eqref{def:cm} is then given by
	 \begin{equation}\label{def:cmn0}
		S^{m,0,\lambda} := \left\{ (p,q,r) \mid q + \lambda pr + {1\over \lambda} r^m - {2m \over 1+m} - {1\over \lambda} = 0, \ p \ge 0 \ r\ge 0.  \right\}.
	\end{equation}
	Note that for $f(p,q,r)$ defined as $$f(p,q,r) = q+ \lambda pr + {1\over \lambda} r^m - {2m \over 1+m} - {1\over \lambda},$$ the partial derivative with respect to $r$ is ${\partial f \over \partial r} = \lambda p + {m\over \lambda }r^{m-1}$. It is strictly positive if $r$ is away from 0, therefore we can apply the implicit function theorem in a neighborhood of $M_0^{m,0,\lambda}$ and $M_1^{m,0,\lambda}$. For the $r$ coordinate of $M_1^{m,0,\lambda}$, $B^{m,0,\lambda}$, we will consider the contour line of $S^{m,0,\lambda}$ at $0 < \underline{r} < B^{m,0,\lambda}$. We will choose $\underline{r}$ later, but at this moment, we assume that it is away from $B^{m,0,\lambda}$, that is, $\underline{r} < {1\over 2}B^{m,0,\lambda}$. Now, we consider the following triangular domain 
	\begin{equation}\label{def:dd}
		D := \left\{ (p,q) \in \R^2 \mid p\ge 0, \ q \ge 0, \ q + \lambda p \underline{r} \le {2m \over 1+m} - {1\over \lambda}(\underline{r}^m-1) \right\}
	\end{equation}
	and its image on $S^{m,0,\lambda}$ as $$T^{m,0,\lambda} := \left\{ (p,q,r) \in S^{m,0,\lambda} \mid (p,q) \in D \right\}.$$ For given $m$ and $\lambda$, $T^{m,0,\lambda}$ is always away from the surface $r=0$. However, $D$ and $T^{m,0,\lambda}$ don't have smooth boundary. In order to apply the implicit function theorem, let $\tilde{D}$ be a compact simply connected domain with $C^\infty$ boundary such that $D \subset \tilde{D}$. Define the corresponding image of $\tilde{D}$ on the surface $S^{m,0,\lambda}$ as $G^{m,0,\lambda}$.
	$$G^{m,0,\lambda} := \left\{ (p,q,r) \in S^{m,0,\lambda} \mid (p,q) \in \tilde{D} \right\}.$$
	Still, we can take $G^{m,0,\lambda}$ away from the surface $r=0$. On the bounded surface $G^{m,0,\lambda}$, the implicit function theorem can be applied; there exists a positive function $r:= h(p,q,n = 0)$ such that it is defined on $\tilde{D}$ and $h(p,q,0) \in C^1 (\tilde{D})$. The reduced system for $n=0$ can be written as follows.
	
	\begin{equation}\label{eq:pq0}
		\begin{aligned}
			\dot{p} &= -p \left( q+\lambda ph - \frac{1}{m\lambda} h^m -\frac{2m}{1+m} + {1\over m \lambda} \right),  \\ 
			\dot{q} &= q \ (1-q-\lambda ph) +  \frac{1-m}{1+m} \lambda ph ,\\
			r &= h(p,q,0). 
		\end{aligned} \tag{$P_{pq0}$}
	\end{equation}

	\subsection{The pqr-system in the fast-time scale}
	In this section, we consider a new variable $\tilde{\eta} = {\eta \over \lambda}$, which changes the pqr-system \eqref{eq:pqr} for $d=1$ into fast scale.
	
	\begin{equation}\label{eq:pqrfast}
		\begin{aligned}
			\dot{p} &= -np \left( q+\lambda pr - \frac{1}{m\lambda} r^m -\frac{2(m-n)}{1+m-n} + {1\over m \lambda} \right),  \\ 
			\dot{q} &= n  q \ (1-q-\lambda pr) +  \frac{1-m+n}{1+m-n} \lambda pr ,\\
			\dot{r} &= r\left(q+\lambda pr + \frac{m-n}{m\lambda} r^m -\frac{2(m-n)}{1+m-n} - {m-n\over m\lambda} \right). 
		\end{aligned} 
	\end{equation}
	The above system can be analyzed under the fast-slow dynamics as 
	\begin{equation} \label{eq:fast-slow}
		\begin{aligned}
			\begin{bmatrix} \dot{p} \\ \dot{q} \end{bmatrix} &= n \ g^{m,\lambda}(p,q,r,n), \\
			\dot{r} &= f^{m,\lambda}(p,q,r,n).
		\end{aligned}
	\end{equation} 
	In the limit as $n\to 0$, we have 
	\begin{equation} \label{eq:n0}
		\begin{aligned}
			\begin{bmatrix} \dot{p} \\ \dot{q} \end{bmatrix} &= \vec{0}, \\
			\dot{r} &= f^{m,\lambda}(p,q,r,0).
		\end{aligned}
	\end{equation} 
	To prove Theorem~\ref{thm:main}, we will approximate the flow on $S^{m,n,\lambda}$ to the flow on $S^{m,0,\lambda}$ by the geometric singular perturbation theory when $n$ is small enough. After this approximation, we will investigate the existence of a heteroclinic orbit joining $M_0^{m,n,\lambda}$ to $M_1^{m,n,\lambda}$ with emanating direction $\vec{X}_{01}$.
	
	More precisely, a graph version of Fenichel's first theorem will be used in the proof (Theorem 1 and 2 of \cite{Jones1995}). First, we introduce the notions of normal hyperbolicity and local invariance of a bounded smooth manifold. 
	\begin{definition}[Normal hyperbolicity]
		A manifold $G \subset S^{m,0,\lambda}$ is called normally hyperbolic to \eqref{eq:n0} if $D_r f^{m,\lambda}(p,q,r,0)$ has no eigenvalue with zero real part for all $(p,q,r) \in G$.
	\end{definition}
	\begin{definition}[Local invariance]
		Let $\phi_{\eta}(\cdot)$ be the flow function defined by the vector field of the differential system \eqref{eq:fast-slow}. A manifold $G$ is locally invariant if for all $(p,q,r) \in G$, there exists a time interval $\eta \in [\eta_1,\eta_2]$ such that $\eta_1 < 0 < \eta_2$ and $\phi_{[\eta_1,\eta_2]}(p,q,r) \subset G$.
	\end{definition}
	Fenichel's first theorem asserts the existence of $G^{m,n,\lambda}$ which is a perturbation
	of $G^{m,0,\lambda}$. At the same time, the flow on $G^{m,n,\lambda}$ can be smoothly approximated from the case of $n=0$ by the graph representation $r=h(p,q,n)$. Let $n \in I$ where $I$ is an interval containing 0 and consider an open domain $U\subset \R^3$ which contains $G^{m,0,\lambda}$ and does not intersect with $r=0$. To state the theorem, we assume three hypotheses.
	\begin{enumerate}[(H1)]
		\item $f^{m,\lambda}, g^{m,\lambda} \in C^\infty (U\times I)$.
		
		\item The set $G^{m,0,\lambda}$ is a compact manifold, possibly with boundary, and is normally
		hyperbolic relative to \eqref{eq:n0}.
		
		\item The set $G^{m,0,\lambda}$ is given as the graph of the $C^\infty(\tilde{D})$ function $r=h(p,q,0)$ for $(p,q) \in \tilde{D}$. The set $\tilde{D}$ is a compact, simply connected domain whose boundary is an 1-dimensional $C^\infty$ submanifold.
	\end{enumerate}
	
	Now, we are ready to state the Fenichel's first theorem. 
	\begin{theorem}[Fenichel's first theorem]\label{thm:Fenichel}
		Assume (H1) and (H2). If $n_0>0$ is sufficiently small, for all $n \in (0,n_0)$, there exists a manifold $G^{m,n,\lambda}$ that lies within $O(n)$ of $G^{m,0,\lambda}$ and is diffeomorphic to $G^{m,0,\lambda}$. Moreover it is locally invariant under the flow of \eqref{eq:fast-slow}, $C^k(U\times (0,n_0))$ for any $k < +\infty$.
	\end{theorem}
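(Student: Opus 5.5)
The statement is Fenichel's classical persistence theorem for compact normally hyperbolic critical manifolds, in the graph form of Theorems 1 and 2 of \cite{Jones1995}. My plan is therefore not a proof from scratch. I would (a) check that the fast--slow system \eqref{eq:fast-slow} satisfies the hypotheses in the form Jones requires, and (b) outline the standard construction for why the theorem holds.

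\textbf{Step 1: hypotheses.} On $U$, which stays away from $\{r=0\}$, the functions $f^{m,\lambda}$ and $g^{m,\lambda}$ are polynomials in $p,q$ and powers $r^m$, $r^{m-1}$ with $r>0$. Their coefficients, for instance $\frac{2(m-n)}{1+m-n}$, are smooth in $n$ near $0$ because $1+m>0$. This gives (H1).

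For (H2), write $f^{m,\lambda}(p,q,r,0)=r\,\Phi(p,q,r)$, where $\Phi=0$ defines $S^{m,0,\lambda}$. On $G^{m,0,\lambda}$ we then have
\[
D_r f^{m,\lambda}=r\,\partial_r\Phi=r\Big(\lambda p+\tfrac{1}{\lambda}r^{m-1}\Big)\ \ge\ \underline r^{\,m}/\lambda>0,
\]
using $r\ge\underline r$ and $p\ge0$. Hence $G^{m,0,\lambda}$ is uniformly normally hyperbolic; in fact it is normally repelling. Compactness of $G^{m,0,\lambda}$ and its graph structure are exactly (H3), which was obtained from the implicit function theorem on $\tilde D$.

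\textbf{Step 2: construction.}
\begin{itemize}
\item First I would deal with the boundary. Extend $g^{m,\lambda}$ outside a neighbourhood of $\tilde D$ by multiplying with a smooth cutoff, so that the modified slow flow becomes inflowing, and extend $G^{m,0,\lambda}$ slightly. Local invariance of the final manifold is preserved, since the modification only acts near $\partial\tilde D$.
\item Because the normal direction is repelling, I would reverse time; the slow manifold then becomes attracting.
\item I would look for an invariant graph $r=h(p,q,n)$ as the fixed point of the graph transform in a space of Lipschitz graphs close to $h(\cdot,\cdot,0)$. Contraction comes from the uniform normal rate $\underline r^{\,m}/\lambda$, which dominates the tangential rates. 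Those are $O(n)$, since $(\dot p,\dot q)=n\,g^{m,\lambda}$.
\item The estimate $\|h(\cdot,\cdot,n)-h(\cdot,\cdot,0)\|=O(n)$ follows because $h(\cdot,\cdot,0)$ is exactly invariant at $n=0$ and the vector field depends smoothly on $n$.
\item The diffeomorphism to $G^{m,0,\lambda}$ is the graph map $(p,q)\mapsto(p,q,h(p,q,n))$.
\end{itemize}

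\textbf{Step 3: smoothness.} $C^k$ regularity would come from the fiber-contraction theorem applied to the derivatives of $h$. The spectral-gap ratio between normal and tangential rates grows like $1/n$, so for each finite $k$ it exceeds $k$ once $n_0=n_0(k)$ is small. This is why the statement gives $C^k$ for every $k<\infty$ but not $C^\infty$.

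\textbf{Main obstacle.} I expect the delicate point to be the boundary treatment and the resulting \emph{local}, rather than global, invariance. The cutoff must be chosen so that the contraction estimates stay uniform. The cost is that uniqueness of $G^{m,n,\lambda}$ is lost, which is harmless for the later construction of the heteroclinic orbit. Otherwise I would simply invoke \cite{Jones1995} once Step 1 is verified.
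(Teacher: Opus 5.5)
Your proposal is correct and follows the paper's route. The paper gives no independent proof of this statement: it invokes Theorems 1--2 of \cite{Jones1995} after checking that $f^{m,\lambda},g^{m,\lambda}$ are smooth away from $\{r=0\}$ and that the manifold is normally hyperbolic via the sign of $\partial_r$ of the critical-manifold equation (Lemma~\ref{lem:norhyp}), so your graph-transform sketch goes beyond what the paper does. There is one small slip in Step 1: $\partial_r\Phi=\lambda p+\frac{m}{\lambda}r^{m-1}$, so on $T^{m,0,\lambda}$ the bound is $D_rf\ge \frac{m}{\lambda}\underline{r}^{\,m}$ rather than $\underline{r}^{\,m}/\lambda$, and over $\tilde D\setminus D$ you only have $r$ bounded away from $0$, not $r\ge\underline r$; since only positivity of $D_rf$ is needed, the argument is unaffected.
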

	\begin{theorem}[Graph version of Theorem~\ref{thm:Fenichel}]\label{thm:graphFenichel}
		Assume (H1), (H2), and (H3). If $n_0>0$ sufficiently small, for all $n \in (0,n_0)$, there exists a function $r = h(p,q,n)$ defined on $(p,q) \in \tilde{D}$ such that the graph 
		\begin{equation}\label{eq:graph}
			G^{m,n,\lambda} = \left\{ (p,q,r)\in \R^3 \mid r= h(p,q,n) \right\}
		\end{equation}
		is locally invariant under \eqref{eq:fast-slow}. Moreover $h \in C^k(\tilde{D}\times (0,n_0))$, for any $k < +\infty$, jointly in $(p,q)$ and $n$.
	\end{theorem}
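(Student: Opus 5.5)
The graph version follows from Fenichel's first theorem (Theorem~\ref{thm:Fenichel}), combined with the structure assumed in (H3). I will also check that the hypotheses hold for the fast system \eqref{eq:pqrfast}, since that is the real content here.

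\textbf{Verification of the hypotheses.} (H1) holds because $f^{m,\lambda}$ and $g^{m,\lambda}$ are polynomials in $p,q$ and contain the powers $r^m$ and $r^{m+1}$, with coefficients rational in $n$. These are $C^\infty$ on any open set $U$ whose closure avoids $\{r=0\}$. Such a $U$ exists because $G^{m,0,\lambda}$ is compact and bounded away from $r=0$.

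For (H2), note that on $S^{m,0,\lambda}$ the bracket in $f$ vanishes. Hence
\[
D_r f^{m,\lambda}(p,q,r,0)=r\,\partial_r\Big(q+\lambda pr+\tfrac{1}{\lambda}r^m-\tfrac{2m}{1+m}-\tfrac1\lambda\Big)=r\big(\lambda p+\tfrac{m}{\lambda}r^{m-1}\big).
\]
This is strictly positive on $G^{m,0,\lambda}$, where $r\ge c>0$ and $p\ge0$. If $\tilde D$ pokes slightly into $p<0$, positivity is kept by choosing $\tilde D$ within a small neighborhood of $D$. So $G^{m,0,\lambda}$ is normally hyperbolic, in fact normally repelling.

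(H3) is the implicit function construction already made before the statement: $\partial_r f\neq 0$ on $G^{m,0,\lambda}$ gives $h(\cdot,\cdot,0)\in C^\infty(\tilde D)$, and $\tilde D$ is chosen compact, simply connected and with smooth boundary.

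\textbf{Derivation of the graph form.} Apply Theorem~\ref{thm:Fenichel} to get a locally invariant $G^{m,n,\lambda}$ lying within $O(n)$ of $G^{m,0,\lambda}$ in $C^1$. Following Jones' construction, $G^{m,n,\lambda}$ is obtained as a fixed point of a graph transform on the space of functions $\tilde D\to\mathbb R$ that are Lipschitz-close to $h(\cdot,\cdot,0)$. To make this well defined on a manifold with boundary, I would first modify the slow vector field near $\partial\tilde D$ with a smooth cutoff so that the modified flow is overflowing invariant, i.e.\ inward-pointing on the boundary. The modification only changes the flow near $\partial\tilde D$, so the resulting graph is locally invariant for the original system on the interior, and it is defined over all of $\tilde D$.

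Once $G^{m,n,\lambda}$ is $C^1$-close to a graph over $\tilde D$, it is itself a graph $r=h(p,q,n)$. Here the simple connectivity and boundary smoothness in (H3) ensure that the projection to $(p,q)$ remains a diffeomorphism onto $\tilde D$.

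\textbf{Regularity, the main obstacle.} The hard part is $C^k$ regularity jointly in $(p,q,n)$. I would handle it by appending $\dot n=0$, so that $n$ becomes a state variable. The set $\{(p,q,h(p,q,0),0)\}$ is then a normally hyperbolic manifold of equilibria of the extended system, with the slow directions and $n$ neutral and the $r$-direction repelling. Applying the $C^k$ section theorem (fiber contraction) to the extended system gives $C^k$ dependence on $(p,q,n)$ for each finite $k$, with $n_0$ shrinking as $k$ grows. This is exactly the content of Theorems 1 and 2 of \cite{Jones1995}, and I would cite them for this step rather than redo the fiber contraction estimates.
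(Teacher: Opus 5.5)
Your proposal is correct and follows the paper's route. The paper does not prove this statement itself; it quotes it as Theorems~1--2 of \cite{Jones1995}, checks (H1)--(H3) through the implicit function theorem and Lemma~\ref{lem:norhyp} just as you do, and your formula $D_r f=r\bigl(\lambda p+\frac{m}{\lambda}r^{m-1}\bigr)$ is in fact more accurate than the paper's, which omits the factor $r$.

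Two minor points. First, a cutoff that makes the slow flow point inward produces an \emph{inflowing}, not overflowing, invariant manifold; inflowing is the right notion here, since $G^{m,0,\lambda}$ is normally repelling. Second, place the cutoff in a collar outside $\tilde D$, which is possible because $h(\cdot,\cdot,0)$ and normal hyperbolicity extend to a neighborhood of $\tilde D$. Then local invariance holds on the whole graph over $\tilde D$, not only on its interior.
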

	
	From its definition, it is clear that $G^{m,0,\lambda} \subset S^{m,0,\lambda}$. When $n > 0$, $S^{m,n,\lambda}$ is a part of the critical manifold of the variable $r$ and $G^{m,n,\lambda}$ is a perturbation of $G^{m,0,\lambda}$. However, we cannot say that $G^{m,n,\lambda} \subset S^{m,n,\lambda}$, and it becomes technically challenging to construct the targeted orbit using Theorem~\ref{thm:Fenichel} without the graph representation \eqref{eq:graph}. As $n$ varies, the critical points $M_0^{m,n,\lambda}$ and $M_1^{m,n,\lambda}$ also change, and the orbit $\phi_{\eta}^{m,n,\lambda}(\cdot)$ varies accordingly. Fenichel's first theorem indicates that, within the given domain $\tilde{D}$, small perturbations in $n$ preserve the local structure, as illustrated in Figure~\ref{fig:gspt}.
	\begin{figure}[H]
		\centering
		\includegraphics[width=0.6\textwidth]{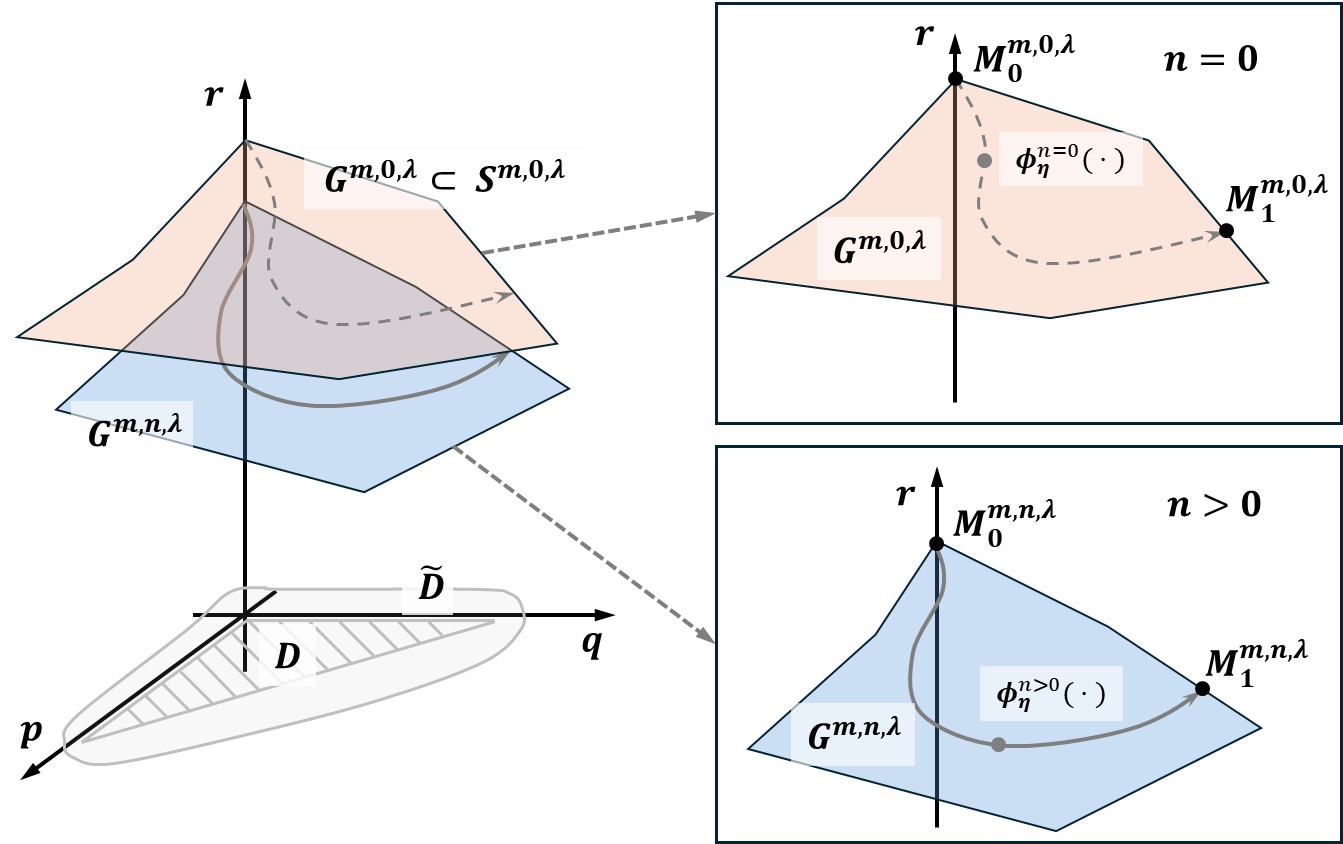}
		\caption{A schematic illustration of Fenichel's first theorem.}
		\label{fig:gspt}
	\end{figure}

	\begin{lemma}\label{lem:norhyp}
		$ G^{m,0,\lambda}$ is a normally hyperbolic manifold with respect to the system \eqref{eq:pq0}. Moreover, $ G^{m,0,\lambda}$ is given as a graph of a function $h(p,q,0) \in C^\infty(\tilde{D})$.
	\end{lemma}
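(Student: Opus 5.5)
The plan is to work with the fast-time system \eqref{eq:pqrfast} at $n=0$, namely the layer problem \eqref{eq:n0}. Here $p,q$ are frozen, and in the $r$-equation
\[
f^{m,\lambda}(p,q,r,0)= r\Bigl(q+\lambda pr+\tfrac{1}{\lambda}r^m-\tfrac{2m}{1+m}-\tfrac{1}{\lambda}\Bigr)=: r\,F(p,q,r).
\]
On $S^{m,0,\lambda}$ we have $F=0$. Hence the linearization in the normal direction is
\[
D_r f = F + rF_r = r\,F_r = r\bigl(\lambda p+\tfrac{m}{\lambda}r^{m-1}\bigr).
\]

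Normal hyperbolicity therefore reduces to showing that $r\,F_r$ is bounded away from zero on $G^{m,0,\lambda}$. Two facts give this.

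\emph{First}, $p\ge0$ and $r>0$ on the relevant region, so $r\,F_r\ge \tfrac{m}{\lambda}r^m>0$.

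\emph{Second}, $r$ is bounded below on $G^{m,0,\lambda}$. On $D$ this is exactly how the triangle \eqref{def:dd} is built. If $(p,q)\in D$ but $h(p,q,0)<\underline r$, then $F$ is increasing in $r$, so
\[
q+\lambda p\underline r+\tfrac1\lambda\underline r^m-\tfrac{2m}{1+m}-\tfrac1\lambda\le 0 ,
\]
which is the defining inequality of $D$, possibly with equality. I will run this monotonicity argument the other way: for $(p,q)\in D$, the function $r\mapsto F(p,q,r)$ is strictly increasing on $r\ge\underline r$, satisfies $F(p,q,\underline r)\le0$, and tends to $+\infty$ as $r\to\infty$. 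So there is a unique root $r=h(p,q,0)\ge\underline r$.

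For $\tilde D$ a slightly larger smooth domain, continuity gives $F(p,q,\underline r/2)<0$ provided $\tilde D$ is chosen within a small neighbourhood of $D$. I will fix that choice of $\tilde D$ explicitly. Then the root satisfies $h\ge \underline r/2>0$ on $\tilde D$, and $D_rf\ge \tfrac{m}{\lambda}(\underline r/2)^m$ on the compact set $G^{m,0,\lambda}$. So this eigenvalue is real and positive, which gives normal hyperbolicity (indeed normal repulsion).

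For the graph statement, $F$ is $C^\infty$ in $(p,q,r)$ on $\{r>0\}$, since $r^m$ is smooth there. $F_r>0$ along the root, and the root is unique. The implicit function theorem then gives a local $C^\infty$ solution $r=h(p,q,0)$ near each point of $\tilde D$. Uniqueness patches these local solutions into a single global function. Smoothness holds up to the boundary because $F$ is smooth on an open neighbourhood of $\tilde D\times[\underline r/2,\infty)$. Compactness of $\tilde D$ and continuity of $h$ give boundedness of $r$ from above, so $G^{m,0,\lambda}$ is compact.

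The main obstacle is the choice of $\tilde D$ together with the lower bound on $r$ near the corner where $q+\lambda p\underline r$ attains its maximum. I would handle it by taking $\tilde D\subset\{p>-\delta,\ q>-\delta\}$ within distance $\delta$ of $D$, and making $\delta$ small via a uniform continuity estimate of $F(\cdot,\cdot,\underline r/2)$. Negative $p$ slightly outside $D$ affects $F_r$ only by $\lambda\delta$, which is absorbed by $\tfrac m\lambda r^{m-1}$ for $r$ bounded.
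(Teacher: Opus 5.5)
Your proposal is correct and takes the same route as the paper. The graph $r=h(p,q,0)$ comes from the implicit function theorem applied to the bracket $F$, and normal hyperbolicity comes from the sign of the $r$-derivative of the fast field on $\{F=0\}$, which is positive because $p\ge 0$ and $r>0$.

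Your version is more careful than the paper's two-line argument in three places. First, you use the correct normal eigenvalue $rF_r=\lambda p r+\frac{m}{\lambda}r^m$; the paper writes $F_r$, which harmlessly drops the factor $r>0$. Second, you supply the uniform lower bound $h\ge \underline r/2$ on $\tilde D$. Third, you handle the slightly negative $p$ in $\tilde D\setminus D$ (where $F$ is not monotone for large $r$) by restricting to a bounded range of $r$; the paper leaves this implicit.
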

	\begin{proof}
		An application of the implicit function theorem and the graph representation are already shown in the previous section, which is $r=h(p,q,0)$ in \eqref{eq:pq0}. Therefore, it suffices to show that it is normally hyperbolic. By a direct calculation, we have $$D_r f^{0,m,n} = \lambda p + {m-n \over \lambda} r^{m-1}.$$ Since $r>0$ on $ G^{m,0,\lambda}$, $D_r f^{0,m,n}$ is always non-zero, which completes the proof.
	\end{proof}
	
	\subsection{Proof of Theorem~\ref{thm:main}}
	Now, we are ready to prove the main theorem. In the statement of Theorem~\ref{thm:main}, we assume $(m,0,\lambda) \in \Lambda$. Putting it to the definition of $\Lambda$, we have 
	\begin{equation}\label{eq:assump}
		0 < m < 1 , \quad 0 < \lambda < {1+m \over 1-m}.
	\end{equation}
	By definition of $f^{m,\lambda}$ and $g^{m,\lambda}$ in the system \eqref{eq:pqrfast}, these are smooth functions on $U\times I$. By Lemma~\ref{lem:norhyp}, the Fenichel's first theorem \eqref{thm:graphFenichel} can be applied. Thus, for any given $m$ and $\lambda$ with $(m,0,\lambda) \in \Lambda$, there exists a small $n_0>0$ such that for every $n \in (0,n_0)$, there exists a perturbed manifold $G^{m,n,\lambda}$ and is given by a graph $(p,q,h(p,q,n))$, which is a locally invariant manifold with respect to the system \eqref{eq:pqrfast}. The graph function $h(p,q,n) \in C^k (\tilde{D}\times (0,n_0))$, for any $k < +\infty$. By Theorem~\ref{thm:Fenichel}, $G^{m,n,\lambda}$ is $O(n)$ order perturbation of $G^{m,0,\lambda}$. Thus, if it is needed, we take $n_0$ smaller such that $G^{m,n,\lambda}$ does not intersect with $r=0$ and $B^{m,n,\lambda} > {1\over 2} B^{m,0,\lambda} > \underline{r}$. If necessary, we decrease $n_0>0$ further so that
		\[
		n_0<m, \qquad \lambda<\frac{(m-n_0)(1+m-n_0)}{m(1-m+n_0)}, \qquad (m-n_0)\left(1+\frac{2m}{1+m-n_0}\lambda\right)-2n_0\lambda>0.
		\]
		These inequalities then hold uniformly for every $n\in(0,n_0)$. The first inequality ensures that $0<n<m$, while the second guarantees the positivity of the $r$-coordinate of $M_1^{m,n,\lambda}$. Thus, the two critical points $M_0^{m,n,\lambda}$ and $M_1^{m,n,\lambda}$ are well defined and lie in the relevant region of the positive phase space. The third inequality guarantees that
		\[
		(m-n)(A^{m,n,\lambda})^m-2n\lambda>0
		\]
		for every $n\in(0,n_0)$, which is the condition ensuring that the distinguished eigenvector $\vec{X}_{01}$ points into the positively invariant region introduced below.

	By putting $r =  h(p,q,n)$ to the system \eqref{eq:pqrfast}, the reduced system of $(p,q)$ is obtained as
	\begin{equation}\label{eq:pqreduced}
		\begin{aligned}
			\dot{p} &= -p \left( q+\lambda ph - \frac{1}{m\lambda} h^m -\frac{2(m-n)}{1+m-n} + {1\over m \lambda} \right),  \\ 
			\dot{q} &= q \ (1-q-\lambda ph) +  \frac{1-m+n}{1+m-n} \lambda ph ,\\
			r &= h(p,q,n). 
		\end{aligned} \tag{$P_{pq}$}
	\end{equation}

	\begin{lemma}\label{lem:critical}
		The system \eqref{eq:pqreduced} on the manifold $ G^{m,n,\lambda}$ allows only two critical points, $M_0^{m,n,\lambda}$ and $M_1^{m,n,\lambda}$.
	\end{lemma}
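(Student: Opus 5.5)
We need to show that the only equilibria of the reduced system \eqref{eq:pqreduced} on $G^{m,n,\lambda}$ are $M_0^{m,n,\lambda}$ and $M_1^{m,n,\lambda}$. The plan is to relate equilibria of the reduced system to equilibria of the full system \eqref{eq:pqrfast}, and then use the classification of the four critical points $M_0,\dots,M_3$ already listed at the start of Section~\ref{sec:asymptotic}.

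\emph{Step 1: equilibria of the reduced flow are equilibria of the full flow.} Suppose $(p_*,q_*)\in\tilde D$ is an equilibrium of \eqref{eq:pqreduced}. Since $G^{m,n,\lambda}$ is locally invariant (Theorem~\ref{thm:graphFenichel}), the full vector field is tangent to the graph $r=h(p,q,n)$. Hence
\[
\dot r = \partial_p h\,\dot p+\partial_q h\,\dot q .
\]
At a point where $\dot p=\dot q=0$ this forces $\dot r=0$. So $(p_*,q_*,h(p_*,q_*,n))$ is a critical point of \eqref{eq:pqrfast}, which has the same critical points as \eqref{eq:pqr} for $n>0$. The converse is also needed: $M_0^{m,n,\lambda}$ and $M_1^{m,n,\lambda}$ must actually lie on $G^{m,n,\lambda}$. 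I would argue this from normal hyperbolicity. Each $M_i$ is $O(n)$-close to $M_i^{m,0,\lambda}\in G^{m,0,\lambda}$, and $D_r f>0$ near the slow manifold, so any trajectory starting off $G^{m,n,\lambda}$ in a tubular neighbourhood leaves it. An equilibrium inside the neighbourhood therefore cannot be off the slow manifold. This is the standard fact that a normally repelling Fenichel manifold contains all bounded invariant sets in its neighbourhood, and I would cite it from \cite{Jones1995}. The neighbourhood must contain both points; this holds because $p=0$, $q\in\{0,1\}$ lie in $D$, and the $r$-coordinates are $A$ and $B>\underline r$.

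\emph{Step 2: exclude $M_2$ and $M_3$.} Both of these have $r=0$. By construction (decreasing $n_0$ if needed) $G^{m,n,\lambda}$ stays away from $\{r=0\}$, so neither can be on the manifold.

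\emph{Step 3: completeness of the list.} I would verify directly that the four listed points are the only critical points of \eqref{eq:pqr} with $p\ge0$, $r\ge0$, so no other equilibrium can hide on $G^{m,n,\lambda}$. The case split runs as follows.
\begin{itemize}
\item If $p=0$, the $q$-equation reduces to $q(1-q)=0$, giving $q\in\{0,1\}$. Then $r=0$, or the bracket in the $r$-equation vanishes, which determines $r^m$ uniquely. This yields exactly $M_0,\dots,M_3$ for $d=1$.
\item If $p>0$ and $r>0$, both brackets in the $p$- and $r$-equations vanish. Subtracting them eliminates $q+\lambda pr$ and gives $r^m=1$, i.e.\ $r=1$. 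Substituting back gives $q+\lambda p = \frac{2(m-n)}{1+m-n}$, and the $q$-equation then becomes one scalar equation in $p$. I would need to show this has no root with $p>0$ and $(p,q)\in\tilde D$. Checking the sign of that quadratic expression using the constraints in $\Lambda$ is the step I expect to be the main obstacle. If it fails, I would instead shrink $\tilde D$ around $D$ so that the spurious equilibrium, which lies on $r=1$ with $p$ bounded below, is excluded from the domain.
\item If $p>0$ and $r=0$, the $q$-equation gives $q\in\{0,1\}$. The $p$-equation's bracket cannot vanish, since $\lambda_{21},\lambda_{31}<0$, so there is no equilibrium here.
\end{itemize}
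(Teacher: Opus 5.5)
Your main line is the paper's. Tangency of the vector field to the locally invariant graph, $\dot r=\partial_p h\,\dot p+\partial_q h\,\dot q$, turns equilibria of the reduced system into equilibria of the full system. The points $M_2^{m,n,\lambda}$ and $M_3^{m,n,\lambda}$ are then excluded because $G^{m,n,\lambda}$ stays away from $\{r=0\}$.

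You differ in how you show that $M_0^{m,n,\lambda}$ and $M_1^{m,n,\lambda}$ actually lie on $G^{m,n,\lambda}$. The paper gets this for free. On $\{p=0\}$ the reduced equations read $\dot p=0$, $\dot q=q(1-q)$ whatever $h$ is, so $(0,0)$ and $(0,1)$ are equilibria of the reduced system. Tangency then gives $\dot r=0$ at $(0,0,h(0,0,n))$ and $(0,1,h(0,1,n))$, and since $h>0$ these full equilibria can only be $M_0^{m,n,\lambda}$ and $M_1^{m,n,\lambda}$.

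Your normal-repulsion argument is also correct, and you can make it self-contained rather than cited. The quantity $\delta=r-h(p,q,n)$ obeys $\dot\delta=F(p,q,\delta)$ with $F(p,q,0)=0$ by invariance. Moreover, $\partial_\delta F$ is bounded below by a positive constant in a tube of fixed width around $G^{m,n,\lambda}$ for small $n$, because on $S^{m,0,\lambda}$ one has $\partial_r f=r\bigl(\lambda p+\frac{m}{\lambda}r^{m-1}\bigr)>0$. So any equilibrium in the tube has $\delta=0$. Finally, $M_0^{m,n,\lambda}$ and $M_1^{m,n,\lambda}$ lie in the tube, since they converge to points of $G^{m,0,\lambda}$ over $D$. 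This works, but it is more machinery than the paper's observation.

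The step you flag as the main obstacle in Step 3 closes in one line, so no fallback is needed. Shrinking $\tilde D$ would not have been admissible anyway if the extra point lay in $D$, since the invariant region $T^{m,n,\lambda}$ and the Poincar\'e--Bendixson argument require $D\subset\tilde D$.

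Put $c=\frac{2(m-n)}{1+m-n}$ and $\beta=\frac{1-m+n}{1+m-n}$, and note that $\beta=1-c$. For $p>0$, $r>0$ you correctly get $r=1$ and $q+\lambda p=c$. Then $1-q-\lambda pr=1-c=\beta$, and the $q$-equation becomes $\dot q=\beta q+\beta\lambda p=\beta c>0$, because $n<m<n+1$. Hence there is no equilibrium with $p>0$, $r>0$, and the equilibria with $r>0$ are exactly $M_0^{m,n,\lambda}$ and $M_1^{m,n,\lambda}$. With this line added, your Step 3 actually verifies the list of critical points that the paper only asserts at the start of Section~\ref{sec:asymptotic}.

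Your third bullet ($p>0$, $r=0$) is unnecessary, since $G^{m,n,\lambda}$ avoids $\{r=0\}$. Its justification also should not be relied upon. The eigenvalue $\lambda_{21}=c-\frac{1}{m\lambda}$ is positive on part of $\Lambda$ when $m-n>\frac12$. At $\lambda=\frac{1}{mc}$ the whole half-line $\{(p,0,0): p\ge 0\}$ consists of equilibria.
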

	\begin{proof}
		Since $ G^{m,n,\lambda}$ does not intersect with $r=0$, only $M_0^{m,n,\lambda}$ and $M_1^{m,n,\lambda}$ can be critical points of \eqref{eq:pqreduced}. By putting $(0,0)$ and $(0,1)$, we can easily verify that they are critical points of the system \eqref{eq:pqreduced}. Since $\dot{r} = \pdv{h}{p} \dot{p} + \pdv{h}{q} \dot{q}$, the two points $(0,0,h(0,0,n))$ and $(0,1,h(0,1,n))$ should be the critical points of the original system \eqref{eq:pqrfast}. Necessarily, they are $M_0^{m,n,\lambda} = (0,0,h(0,0,n))$ and $M_1^{m,n,\lambda} = (0,1,h(0,1,n))$.
	\end{proof}
	
	Now, we recall the definition of the triangular domain $D$ in \eqref{def:dd}. Here, we define its image on $G^{m,n,\lambda}$ as follows.
	\begin{equation}
		T^{m,n,\lambda} := \left\{ (p,q,r) \in G^{m,n,\lambda} \mid (p,q) \in D \right\}.
	\end{equation}
	We will show that the above triangular domain is a positively invariant region of the system \eqref{eq:pqreduced} as in Figure~\ref{fig:invariant}. 
	\begin{figure}[H]
		\centering
		\includegraphics[width=0.35\textwidth]{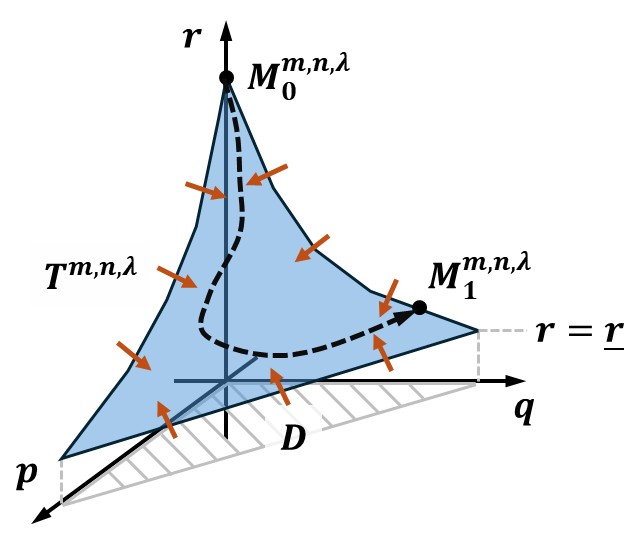}
		\caption{A schematic illustration of a positively invariant region $T^{m,n,\lambda}$.}
		\label{fig:invariant}
	\end{figure}
	\begin{lemma}
		If $0<n<m$, there exists a small enough but positive $\underline{r}$ such that $T^{m,n,\lambda}$ is a positively invariant region of the system \eqref{eq:pqreduced}. 
	\end{lemma}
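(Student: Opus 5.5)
We check, edge by edge, that the vector field of \eqref{eq:pqreduced} points into $D$ (or is tangent) on $\partial D$. Since $G^{m,n,\lambda}$ is a graph over $\tilde D\supset D$ and $r=h(p,q,n)$ is continuous, positive invariance of $D$ for the planar system \eqref{eq:pqreduced} gives positive invariance of $T^{m,n,\lambda}$. The triangle $D$ has three edges:
\[
E_1=\{p=0\},\qquad E_2=\{q=0\},\qquad E_3=\Bigl\{q+\lambda p\underline r=c(\underline r)\Bigr\},\qquad c(\underline r):=\frac{2m}{1+m}-\frac1\lambda(\underline r^m-1).
\]
Note that $c(\underline r)>0$ for $\underline r<1$ small.

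\textbf{The easy edges $E_1$ and $E_2$.}
\begin{itemize}
\item On $E_1$ the line $\{p=0\}$ is invariant, because $\dot p$ carries the factor $p$. So orbits cannot cross it.
\item On $E_2$ we have $\dot q=\frac{1-m+n}{1+m-n}\lambda p h\ge 0$, since $m<n+1$ and $p,h\ge0$. So the field points into $\{q\ge0\}$.
\end{itemize}
Both checks are immediate.

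\textbf{The hypotenuse $E_3$: sign to establish.} The outward normal is $(\lambda\underline r,1)$, so we must show
\[
\lambda\underline r\,\dot p+\dot q\le 0 \quad\text{on } E_3 .
\]

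\textbf{Step 1: work at $n=0$.} Here $h=h(p,q,0)$ satisfies $q+\lambda ph+\frac1\lambda h^m=\frac{2m}{1+m}+\frac1\lambda$ exactly. Substituting, the $\dot p$ bracket becomes
\[
\frac{1+m}{m\lambda}\bigl(1-h^m\bigr)+(\text{terms vanishing as } m\to\ldots),
\]
more precisely $-\frac{1}{m\lambda}h^m+\frac1{m\lambda}+(\frac{2m}{1+m}+\frac1\lambda-\frac1\lambda h^m)-\frac{2m}{1+m}$. This is a function of $h$ only. On $E_3$ we compare $q+\lambda p h$ with $q+\lambda p\underline r$, using $h\ge \underline r$ (this is where $\underline r$ is the "contour" level). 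The $\dot q$ term is
\[
\dot q=q(1-q-\lambda ph)+\tfrac{1-m}{1+m}\lambda ph .
\]

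\textbf{Step 2: the small-$\underline r$ limit.} As $\underline r\to0$ the edge $E_3$ approaches $\{q\approx c(0)=\frac{2m}{1+m}+\frac1\lambda\}$ with $p$ ranging up to $O(1/\underline r)$. Along it, $h$ is determined by the constraint. The quantity
\[
q(1-q)+\ldots
\]
is then negative because $q>1$ near $c(0)$, which holds when $\lambda<\frac{1+m}{1-m}$ — this is precisely assumption \eqref{eq:assump}. The $\lambda\underline r\dot p$ contribution should be $O(\underline r)\cdot p\cdot O(1)$. I would split $E_3$ into two pieces:
\begin{itemize}
\item near the $q$-axis ($p$ bounded): $\dot q<0$ dominates;
\item near the $p$-axis corner ($p\sim c/(\lambda\underline r)$, $q$ small, $h$ close to $\underline r$): the $\dot p$ bracket is dominated by $q+\lambda ph\approx c$, so $\dot p<0$ strongly and the normal component is negative.
\end{itemize}
I expect this uniform-sign verification across the whole hypotenuse, especially the transition region and the corner where $h\approx\underline r$, to be the main obstacle. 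I would handle it by writing everything in terms of the single parameter $s=\lambda p\underline r\in[0,c]$ and proving strict negativity of a continuous function of $s$ on the compact interval.

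\textbf{Step 3: pass to small $n>0$.} Once strict negativity holds at $n=0$ with a margin $\delta(\underline r)>0$, Theorem~\ref{thm:graphFenichel} gives $h(\cdot,\cdot,n)\to h(\cdot,\cdot,0)$ in $C^1(\tilde D)$. The coefficients of \eqref{eq:pqreduced} also depend smoothly on $n$. Hence the normal component on $E_3$ stays negative after shrinking $n_0$ (depending on $\underline r$).

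\textbf{Corners.} Corner points are handled by the same sign information, since each corner lies on two edges where the above conditions apply.
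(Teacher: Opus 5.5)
Your route is the paper's: check the field edge by edge on $\partial D$, where $\{p=0\}$ and $\{q=0\}$ are immediate and the hypotenuse is the only real work. But the decisive step on $E_3$ is not carried out. You sketch the two ends and defer the ``transition region'' as the main obstacle. At the corner $p=c/(\lambda\underline r)$ you infer negativity from $\dot p<0$, but there $\dot q=\frac{1-m}{1+m}\,c>0$, so a comparison is needed.

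The missing observation is that at $n=0$ the edge $E_3$ is \emph{exactly} the level curve $\{h(\cdot,\cdot,0)=\underline r\}$, not merely a set where $h\ge\underline r$. Indeed, on $E_3$ one has $q+\lambda p\underline r+\frac1\lambda\underline r^m=\frac{2m}{1+m}+\frac1\lambda$, and $r\mapsto q+\lambda pr+\frac1\lambda r^m$ is strictly increasing, so $h\equiv\underline r$ along the whole edge. Combine this with the exact identity, valid on $E_3$ for every $n$ and obtained by substituting $q=c-\lambda p\underline r$ into the reduced system:
\[
\lambda\underline r\,\dot p+\dot q=-c(c-1)-(c-k)\,\lambda p\,(h-\underline r)-\frac{\underline r\,p}{m}\bigl(1-h^m\bigr),\qquad k=\frac{1-m+n}{1+m-n}.
\]
At $n=0$ this reduces to $-c(c-1)-\frac{\underline r p}{m}(1-\underline r^m)\le -c(c-1)$. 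Moreover $c>1$ iff $\frac{1-\underline r^m}{\lambda}>\frac{1-m}{1+m}$, which holds for small $\underline r$ precisely because $\lambda<\frac{1+m}{1-m}$. So there is no transition region at all, and the margin $c(c-1)$ is even uniform as $\underline r\to0$.

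With this in hand your Step~3 finishes the proof, and your order of quantifiers is the correct one. Since $\lambda p$ reaches $c/\underline r$ on $E_3$, the Fenichel error $h(\cdot,\cdot,n)-\underline r=O(n)$ enters the identity as $O(n/\underline r)$, and $h^m-\underline r^m$ enters as $O(n\underline r^{m-1})$. Hence $\underline r$ must be fixed first and $n_0$ shrunk afterwards; only uniform $C^0$ closeness of $h$ is needed, not $C^1$.

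The paper's proof computes the inward component directly for $n>0$. It keeps a main term of the form $(c-k)\lambda ph+c(c-1)$ and absorbs the rest into ``$O(\underline r^m)$''. That overlooks that terms such as $(c-k)\lambda p\underline r$ and $\frac{\underline r p}{m}(1-h^m)$ are of order one on $L$, and it leaves the dependence of $n_0$ on $\underline r$ implicit. Your $n=0$-then-perturb strategy, completed as above, is the sound version of the same argument.
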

	\begin{proof}
		It is enough to show that on each boundary, $$(\dot{p},\dot{q})\cdot \mathbf{n} \ge 0$$ when $\mathbf{n}$ is an inward normal vector. On the line $p=0$, $\mathbf{n} = (1,0)$ and $\dot{p} = 0$. Thus, $$(\dot{p},\dot{q})\cdot \mathbf{n} \mid_{\left\{ p=0 \right\}} = 0.$$ On the line $q=0$, $\mathbf{n} = (0,1)$ and we have
		$$(\dot{p},\dot{q})\cdot \mathbf{n} \mid_{\left\{ q=0 \right\}} = \dot{q}(p,0) = \frac{1-m+n}{1+m-n} \lambda ph >0$$ since $m < n+1$ and $h>0$. On the line $L : q + \lambda p \underline{r} = {2m \over 1+m} - {1\over \lambda}(\underline{r}^m-1)$, the inward normal vector is $\mathbf{n} = (-\lambda \underline{r},-1)$ and we have 
		\begin{align*}
			(\dot{p}, \dot{q}) \cdot \mathbf{n} |_{\left\{L\right\}} &= -\lambda \underline{r} \dot{p}-\dot{q} \\ & = \frac{1-m+n}{1+m-n} \lambda p ( \underline{r}- h) - (\lambda \underline{r} p+q) (1-q-\lambda ph)  - \frac{ \underline{r}p}{m} (h^m-1) \\ &= \left( {2m\over 1+m} + {1\over \lambda} -\frac{1-m+n}{1+m-n}\right) \lambda p h + \left( {2m\over 1+m} + {1\over \lambda} \right) \left( -{1-m \over 1+m} + {1\over \lambda} \right)  + O(\underline{r}^m) .
		\end{align*}
		Since we are on the parameter domain \eqref{eq:assump}, the above quantities can be estimated as 
		$$ {1\over \lambda} > {1-m \over 1+m}, \quad {2m \over 1+m} + {1\over \lambda} > 1. $$ From the fact that $n < m$, we have $\frac{1-m+n}{1+m-n} < 1$. By combining everything together, 
		\begin{equation*}
			(\dot{p}, \dot{q}) \cdot \mathbf{n} |_{\left\{L \right\}} = \delta + O(\underline{r}^m) 
		\end{equation*}
		for some positive $\delta>0$. By taking $\underline{r}>0$ small enough, we can conclude that $(\dot{p}, \dot{q}) \cdot \mathbf{n} |_{\left\{L \right\}} \ge 0$ for all $(p,q) \in L$.
	\end{proof}
	
	Note that $M_0^{m,n,\lambda}$ is placed on the boundary of $T^{m,n,\lambda}$ and the emanating direction $\vec{X}_{01}$ is pointing inward to the triangular domain. The orbit emanating in $\vec{X}_{01}$ from $M_0^{m,n,\lambda}$ is continued to the interior of $T^{m,n,\lambda}$ by the stable manifold theorem. Since $M_0^{m,n,\lambda}$ and $M_1^{m,n,\lambda}$ are the only critical points by Lemma~\ref{lem:critical} and they are placed on the boundary, there is neither a periodic orbit nor a homoclinic orbit inside $T^{m,n,\lambda}$. Thus, by Poincar\'e–Bendixson theorem, there is a heteroclinic orbit from $M_0^{m,n,\lambda}$ to $M_1^{m,n,\lambda}$ on the manifold $G^{m,n,\lambda}$. This heteroclinic orbit is the desired orbit of the system \eqref{eq:pqr} and \eqref{eq:asympthm} follows from Proposition~\ref{prop:initial}. This completes the proof of Theorem~\ref{thm:main}.
	
	\vspace{5mm}
	
	\begin{proof}[Proof of Theorem~\ref{thm:exist}]
		Since we have $$0<m<1, \quad 0<\lambda < {1+m\over 1-m},$$ for the parameter tuple $(m,n,\lambda)$, we have $(m,0,\lambda) \in \Lambda$. Then, we can apply Theorem~\ref{thm:main} which allows us to find $n_0(m,\lambda)$ such that there exists a heteroclinic orbit from $M_0^{m,n,\lambda}$ to $M_1^{m,n,\lambda}$ to the system \eqref{eq:pqr} for all $n\in (0,n_0)$. In light of Theorem~\ref{thm:main}, the heteroclinic orbit automatically satisfies \eqref{eq:assumpn0}. Denote the heteroclinic orbit of \eqref{eq:pqr} as $\varphi^{m,n,\lambda}(\eta) = (p(\eta), q(\eta), r(\eta))$.
		
		Now, we can apply Proposition~\ref{prop:asymp} to the orbit $\varphi^{m,n,\lambda}$, which gives the dynamics of $\bar{U}$ and $\bar{\Gamma}$ as $\xi \to 0$ or $\xi \to +\infty$. It can be immediately verified that the recovered $(\bar{U}, \bar{\Gamma})$ satisfies the boundary condition \eqref{eq:BD} and \eqref{eq:BD2} with the polynomial exponents $$\mu_1 = {1\over m-n}, \quad \mu_2 = {m\over m-n}.$$ Since $m<1$, we have $\mu_1 > a^{m,n}$ and $\mu_2 > b^{m,n}$. 
		
		Using Proposition~\ref{prop:growth}, the recovered solution $(u,\gamma)$ from \eqref{eq:firstans} is a localizing solution and the function evaluation at $x=0$ is derived from the boundary condition \eqref{eq:BD} and \eqref{eq:lU0} directly.
	\end{proof}

%\vfil\eject
%  section numerical simulations
%%%%%%%%%%%%%%%%%%%%%%%%%%%%%%%%%%%%%%%%%%%%%%%%%%%%%%%%%%%%%%%%

	\section{Asymptotic behavior of the heteroclinic orbit and numerical simulations} \label{sec:numeric}
	
	From Theorem \ref{thm:exist}, we establish the existence of a smooth localizing solution for the given parameters $m$, $n$, and $\lambda$, and the corresponding heteroclinic orbit is provided by Theorem \ref{thm:main}. Furthermore, Proposition \ref{prop:asymp} characterizes the asymptotic behavior that the identified heteroclinic orbit must satisfy with respect to the self-similar variable $\xi = e^{\lambda t} \rho$. By tracing back the nonlinear transformation \eqref{eq:recovered}, the original solution is given as follows.
	
	\begin{equation} \label{eq:selfsimilarprofile}
		\begin{aligned}
			u(\rho,t) &= e^{a \lambda t} \bar{U}(e^{\lambda t} \rho) = \rho^{-a} U( \ln \rho + \lambda t ),\\
			\gamma(\rho,t) &= e^{b \lambda t} \bar{\Gamma}(e^{\lambda t} \rho) = \rho^{-b} \Gamma( \ln \rho + \lambda t ).
		\end{aligned}
	\end{equation}
	
	In this section, we aim to describe the asymptotic behavior of the original solution $(u, \gamma)$ associated with the obtained heteroclinic orbit and to verify it numerically. In the analysis below, we exclude the special case $m - n = \tfrac{1}{2}$, where the eigenvalues have multiplicity and a logarithmic correction arises. 
	
	Due to the symmetric structure of $\xi = e^{\lambda t} \rho$, the solutions considered in this paper exhibit a localization phenomenon in which mass concentrates at $\rho = 0$. The solution has a hill-shaped profile, displaying polynomial decay with respect to the spatial variable, while the localization becomes increasingly pronounced as time evolves. The asymptotic orders for each region are given as follows.
	
	%\color{blue}
	Here, we recall from Proposition~\ref{prop:asymp} the asymptotic behavior of $\bar{U}(\xi)$ and $\bar{\Gamma}(\xi)$.
	
	\begin{enumerate}[(i)]
		\item As $\xi\to0$,
		\begin{align*}
			\bar{U}(\xi)&=A-\frac{a\lambda(A^m+2\lambda)}{2(m-n)A^m-4n\lambda}A^{2-n}\xi^2+o(\xi^2),\\
			\bar{\Gamma}(\xi)&=1-\frac{ma\lambda}{2(m-n)A^m-4n\lambda}A^{1+m-n}\xi^2+o(\xi^2),
		\end{align*}
		where
		\[
		a=\frac{2}{1+m-n}, \qquad A=\left(1+\frac{2m}{1+m-n}\lambda\right)^{1/m}.
		\]
		
		\item As $\xi\to\infty$, if $m-n\neq\frac12$,
		\[
		\bar{U}(\xi)=O\left(\xi^{-\frac{1}{m-n}}\right), \qquad \bar{\Gamma}(\xi)=O\left(\xi^{-\frac{m}{m-n}}\right).
		\]
	\end{enumerate}
	
	Substituting these estimates into the self-similar representation \eqref{eq:selfsimilarprofile}, we obtain the following behavior of the original solution.
	
	\begin{itemize}
		\item At $\rho=0$, the solution attains its maximum, which grows exponentially in time. More precisely,
		\[
		u(0,t)=Ae^{\frac{2}{1+m-n}\lambda t}, \qquad \gamma(0,t)=e^{\frac{2m}{1+m-n}\lambda t}.
		\]
		
		\item Let $\rho\neq0$ be fixed and assume that $m-n\neq\frac12$. As $t\to\infty$, the solution decays exponentially:
		\begin{align*}
			u(\rho,t)&=\rho^{-\frac{1}{m-n}}O\left(e^{-\frac{1-m+n}{(1+m-n)(m-n)}\lambda t}\right),\\
			\gamma(\rho,t)&=\rho^{-\frac{m}{m-n}}O\left(e^{-\frac{m(1-m+n)}{(1+m-n)(m-n)}\lambda t}\right).
		\end{align*}
		Since $n<m<n+1$, both exponential rates are negative.
	\end{itemize}
	\color{black}
	
	To numerically verify the derived asymptotic behavior, we perform simulations for the case $n = 0.3$ and $m = 0.9$. First, the growth of the maximum value of the solution at the peak exhibits exponential behavior, as shown in Figure~\ref{fig:maximum}. The measured growth rate agrees with the theoretical value, $$exponential \ growth \ order = {2 \over 1 + m - n}\lambda \approx 3.2083.$$
	\begin{figure}[H]
		\centering
		\includegraphics[width=0.8\textwidth]{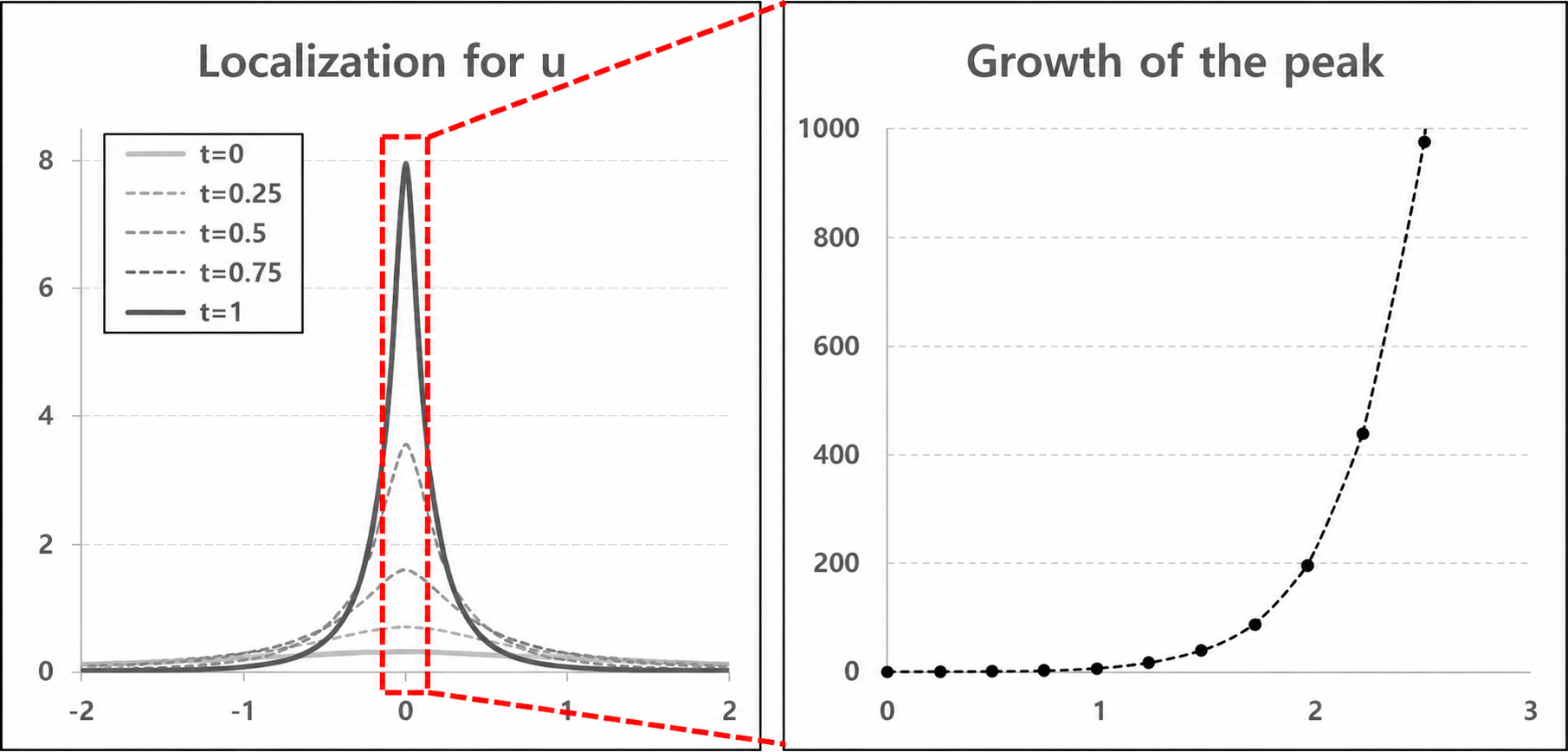}
		\caption{Exponential growth of the peak when $n=0.3$, $m=0.9$.}
		\label{fig:maximum}
	\end{figure}
	Next, we examine the decay rate of the solution $u(\rho,t)$ at $\rho = 2$ as time increases, shown in Figure~\ref{fig:decay}. 
	\begin{figure}[H]
		\centering
		\includegraphics[width=0.8\textwidth]{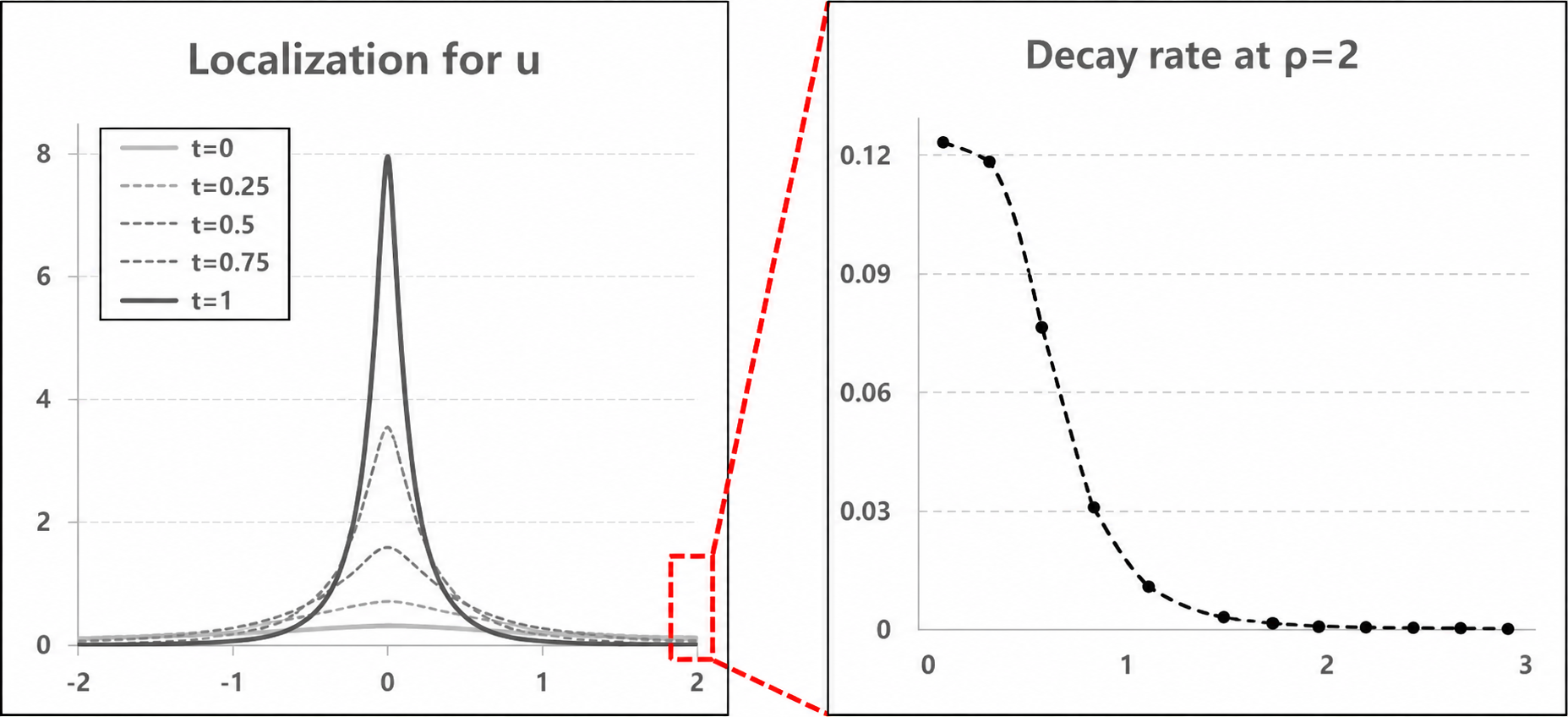}
		\caption{Exponential decay at $\rho =2$ when $n=0.3$, $m=0.9$.}
		\label{fig:decay}
	\end{figure}
	For small times, the solution does not closely follow the exponential decay, and due to the influence of the conservation law, it tends to exhibit a slower decay than the predicted exponential rate. However, as time becomes large, we observe in Figure~\ref{fig:decay2} that the decay rate approaches the theoretical value 
	\[
	exponential \ decay \ order = \left( -\frac{1 - m + n}{(1 + m - n)(m - n)} \right)\lambda \approx -1.0694.
	\]
	\begin{figure}[H]
		\centering
		\includegraphics[width=0.4\textwidth]{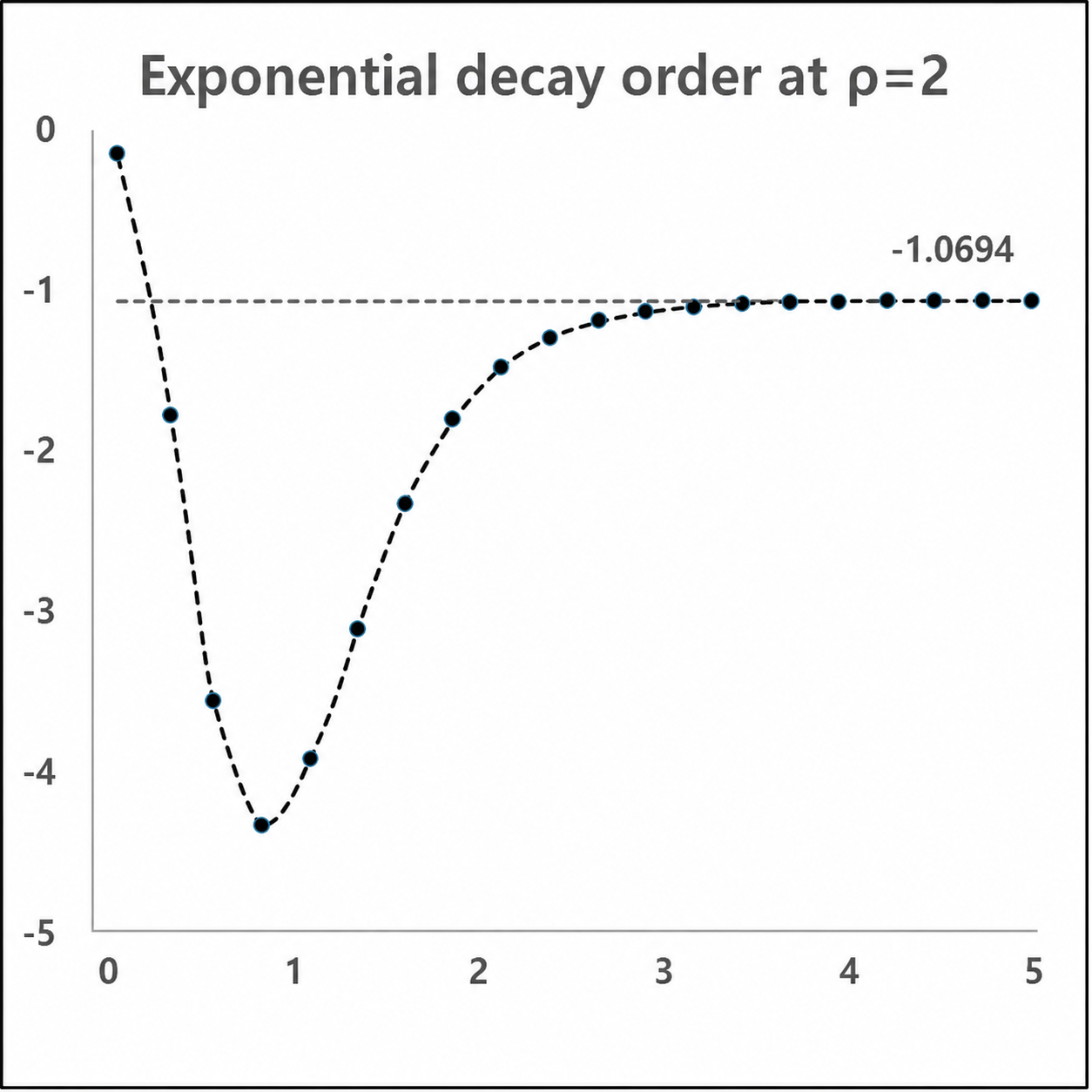}
		\caption{A direct computation of the exponential decay rate. The gray dashed line indicates the theoretical value.}
		\label{fig:decay2}
	\end{figure}

%%%%%%%%%%%%%%%%	
%	\newpage
	
\appendix

%%%%%%%
%   Numerical simulations	

\section{Numerical simulations of the diffusion-relaxation model}\label{appA}

We provide in this appendix numerical simulations for the initial value problem associated with the diffusion--relaxation model \eqref{pmi} on the one-dimensional domain $\Omega=(0,1)$, subject to Neumann boundary conditions. The numerical solutions of \eqref{eq:radial} are computed using a fully implicit finite volume scheme combined with Newton iteration. Let $t_k=k\Delta t$ denote the discrete time levels. Applying the backward Euler method to the relaxation equation gives
\[
\frac{\gamma_i^{k+1}-\gamma_i^k}{\Delta t}=-\gamma_i^{k+1}+(u_i^{k+1})^m,
\]
and hence
\[
\gamma_i^{k+1}=\frac{\gamma_i^k+\Delta t\,(u_i^{k+1})^m}{1+\Delta t}.
\]
Thus, $\gamma_i^{k+1}$ can be eliminated algebraically from the discrete system as a function of $\gamma_i^k$ and $u_i^{k+1}$. Substitution of this relation into the diffusion equation reduces the coupled update to a nonlinear implicit equation for $u^{k+1}$ alone, which is the following relation. 
$$
\frac{u_i^{k+1}-u_i^k}{\Delta t} = \Delta \left( \frac{u_i^{k+1}}{\gamma_i^{k+1}(\gamma_i^k, u_i^{k+1})} \right) 
$$
The diffusion operator is discretized in space by the finite volume method, with the Neumann boundary conditions imposed through zero numerical fluxes at the boundary. The resulting nonlinear algebraic system is solved by Newton iteration, and $\gamma^{k+1}$ is subsequently recovered from the formula above. The initial data are chosen to be smooth, strictly positive, and compatible with the imposed symmetry and boundary conditions.

%\tcr{\bf Please check the statement above how the runs were done, numerical scheme etc} % Comment by Hoyoun : Checked, and it is correct

Below we show numerical runs for dimension $d=1$ for initial data in a trigonometric form. Depending on the given parameter regime, solutions exhibit two distinct behaviors: In the range $n > m > 0$ solutions equilibrate approaching a constant state. By contrast  when $m > n > 0$ a coherent localized structure emerges. These phenomena were observed numerically, and are illustrated in Figure~\ref{fig:EquilvsLocal}. 
\begin{figure}[H]
	\centering
	
	\begin{subfigure}{0.8\textwidth}
		\centering
		\includegraphics[width=\textwidth]{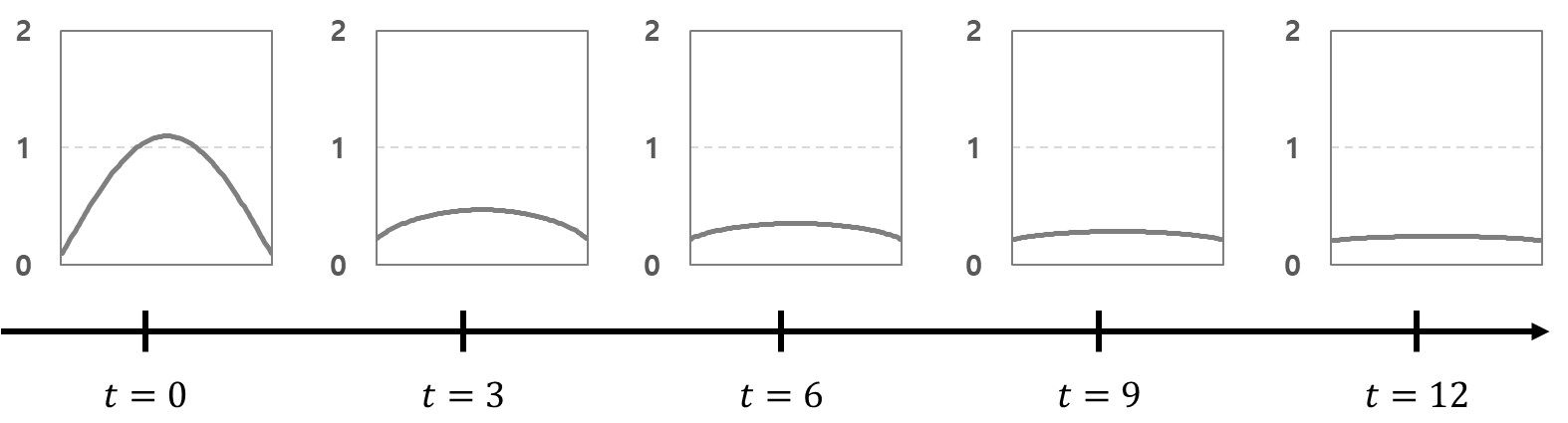}
		\caption{Equilibration, $n=2.5$, $m=2.0$.}
	\end{subfigure}
	
	\vspace{0.5cm}
	
	\begin{subfigure}{0.8\textwidth}
		\centering
		\includegraphics[width=\textwidth]{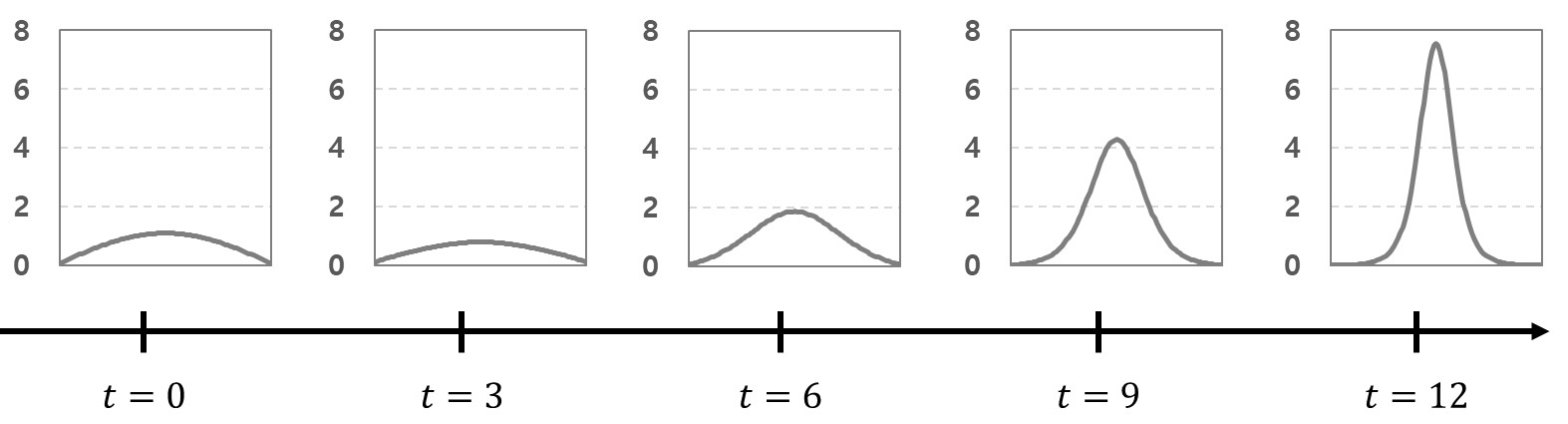}
		\caption{Localization, $n=2.0$, $m=2.5$.}
	\end{subfigure}
	
	\caption{The figure shows simulations of the one-dimensional diffusion--relaxation model for two parameter regimes. A smooth initial condition of trigonometric form is assumed.}
	\label{fig:EquilvsLocal}
\end{figure}
That a difference of behavior is expected can be conjectured  by a simple argument. As time increases the relaxation equation drives the behavior near the equilibrium curve $\gamma = u^m$. The effective response is captured by the diffusion equation $\del_t u = \Delta u^{n-m}$ which is stable for $n> m$ but unstable for $n < m$. The formal limiting equation in the regime $n < m$ is ill-posed at the linearized level and one would expect the development of wild oscillations. Oscillations are not observed numerically and the reason appears to be that the relaxation with the nonlinearity provides a subtle regularizing mechanism. 
This regularizing effect is captured asymptotically by the Chapman--Enskog expansion in the high-relaxation limit. It shows that the next order of the expansion offers a stabilizing mechanism in the unstable range; see Section~\ref{sec:CE}.

%\vfil\eject
%%%%%%%
%   Heuristic Calculation
	
	\section{Heuristic derivation of the far-field asymptotics}\label{sec:heu}
	
	In this appendix, we derive the expected far-field behavior of a localizing profile. We assume that the asymptotic expansions in \eqref{eq:BD2} can be differentiated to the order required below. Substituting
	\[
	\bar{U}(\xi)=C_1\xi^{-\mu_1}+o(\xi^{-\mu_1}), \qquad \bar{\Gamma}(\xi)=C_2\xi^{-\mu_2}+o(\xi^{-\mu_2})
	\]
	into \eqref{nonautoODE} gives
	\begin{align}
		&\lambda(a-\mu_1)C_1\xi^{-\mu_1}+o(\xi^{-\mu_1})=\frac{C_1^n}{C_2}(\mu_2-n\mu_1)(\mu_2-n\mu_1+d-2)\xi^{-n\mu_1+\mu_2-2}+o(\xi^{-n\mu_1+\mu_2-2}), \label{eq:heu-first}\\
		&\bigl(1+\lambda(b-\mu_2)\bigr)C_2\xi^{-\mu_2}+o(\xi^{-\mu_2})=C_1^m\xi^{-m\mu_1}+o(\xi^{-m\mu_1}). \label{eq:heu-second}
	\end{align}
	For the terms in \eqref{eq:heu-second} to have the same asymptotic order, we require
	\begin{equation}\label{eq:heu-relaxation}
		\mu_2=m\mu_1.
	\end{equation}
	The leading-order coefficients then satisfy
	\begin{equation}\label{eq:heu-coefficient}
		\frac{C_1^m}{C_2}=1+\lambda(b-\mu_2).
	\end{equation}
	
	We next consider the first profile equation. Since localization requires $\mu_1>a$ and $a=2/(1+m-n)$, we have
	\[
	\bigl(-n\mu_1+\mu_2-2\bigr)-(-\mu_1)=(1+m-n)\mu_1-2>0.
	\]
	Thus, the term on the right-hand side of \eqref{eq:heu-first} decays more slowly than the term on the left-hand side. Its leading-order coefficient must therefore vanish:
	\[
	(\mu_2-n\mu_1)(\mu_2-n\mu_1+d-2)=0.
	\]
	Using \eqref{eq:heu-relaxation}, this becomes
	\[
	(m-n)\mu_1\bigl((m-n)\mu_1+d-2\bigr)=0.
	\]
	Since $m>n$ and $\mu_1>0$, the first factor is nonzero. Hence,
	\[
	\mu_1=\frac{2-d}{m-n}.
	\]
	In particular, for $d=1$,
	\begin{equation}\label{eq:heu-exponents}
		\mu_1=\frac{1}{m-n}, \qquad \mu_2=\frac{m}{m-n}.
	\end{equation}
	This calculation also explains why the present construction is restricted to $d=1$: for $d\geq2$, the above balance does not produce a positive decay exponent $\mu_1$.
	
	For $d=1$, the expected asymptotic behavior is therefore
	\[
	\bar{U}(\xi)=C_1\xi^{-\frac{1}{m-n}}+o\left(\xi^{-\frac{1}{m-n}}\right), \qquad \bar{\Gamma}(\xi)=C_2\xi^{-\frac{m}{m-n}}+o\left(\xi^{-\frac{m}{m-n}}\right).
	\]
	Since $\bar{\Sigma}=\bar{U}^n/\bar{\Gamma}$ and $\bar{W}=\bar{\Sigma}'$, we also obtain
	\[
	\bar{\Sigma}(\xi)=\frac{C_1^n}{C_2}\xi+o(\xi), \qquad \bar{W}(\xi)=\frac{C_1^n}{C_2}+o(1).
	\]
	The localization conditions $\mu_1>a$ and $\mu_2>b$ are both equivalent to
	\[
	m<n+1.
	\]
	Moreover, substituting \eqref{eq:heu-exponents} into \eqref{eq:heu-coefficient} gives
	\begin{equation}\label{eq:heu-coefficient-final}
		\frac{C_1^m}{C_2}=1-\frac{m\lambda(1-m+n)}{(m-n)(1+m-n)}.
	\end{equation}
	Because $C_1$ and $C_2$ are positive and $m<n+1$, we must have
	\begin{equation}\label{eq:heu-lambda}
		0<\lambda<\frac{(m-n)(1+m-n)}{m(1-m+n)}.
	\end{equation}
	
	We finally express the far-field behavior in terms of the variables $(p,q,r)$. From their definitions,
	\begin{align*}
		p(\ln\xi)&=\frac{\xi^2\bar{\Gamma}^{1/m}}{\bar{\Sigma}}=\frac{C_2^{1+1/m}}{C_1^n}\xi^{1-\frac{1}{m-n}}+o\left(\xi^{1-\frac{1}{m-n}}\right),\\
		q(\ln\xi)&=\frac{\xi\bar{W}-\lambda\xi^2\bar{U}}{\bar{\Sigma}}=1-\frac{\lambda C_2}{C_1^{n-1}}\xi^{1-\frac{1}{m-n}}+o\left(\xi^{1-\frac{1}{m-n}}\right),\\
		r(\ln\xi)&=\frac{\bar{U}}{\bar{\Gamma}^{1/m}}=\frac{C_1}{C_2^{1/m}}+o(1).
	\end{align*}
	Since $m-n<1$, we have $1-1/(m-n)<0$. Therefore,
	\[
	\lim_{\xi\to\infty}p(\ln\xi)=0, \qquad \lim_{\xi\to\infty}q(\ln\xi)=1,
	\]
	and \eqref{eq:heu-coefficient-final} gives
	\[
	\lim_{\xi\to\infty}r(\ln\xi)=\frac{C_1}{C_2^{1/m}}=\left(1-\frac{m\lambda(1-m+n)}{(m-n)(1+m-n)}\right)^{1/m}=B^{m,n,\lambda}.
	\]
	Equivalently,
	\begin{equation}
		\begin{bmatrix}p(\eta)\\q(\eta)\\r(\eta)\end{bmatrix}\longrightarrow M_1^{m,n,\lambda} \qquad\text{as }\eta\to+\infty.
	\end{equation}
	Thus, the far-field conditions \eqref{eq:BD2} correspond to convergence toward $M_1^{m,n,\lambda}$ in the autonomous formulation. The expected heteroclinic orbit therefore connects $M_0^{m,n,\lambda}$ to $M_1^{m,n,\lambda}$ under the parameter restrictions
	\[
	0<n<m<n+1, \qquad 0<\lambda<\frac{(m-n)(1+m-n)}{m(1-m+n)}.
	\]
		
        \bibliographystyle{plain}
		\bibliography{localref}
		
\end{document}